\setlist*[enumerate,1]{label=(\arabic*),itemsep=0pt}
\setlist*{nosep,topsep=1ex}
\theoremstyle{plain}
\newtheorem{theorem}{Theorem}[section]
\newtheorem{theoremC}[theorem]{Theorem${}^\star$}
\newtheorem{proposition}[theorem]{Proposition}
\newtheorem{propositionC}[theorem]{Proposition${}^\star$}
\newtheorem{corollary}[theorem]{Corollary}
\newtheorem{corollaryC}[theorem]{Corollary${}^\star$}
\newtheorem{lemma}[theorem]{Lemma}
\newtheorem{lemmaC}[theorem]{Lemma${}^\star$}
\theoremstyle{definition}
\newtheorem{definition}[theorem]{Definition}
\newtheorem{example}[theorem]{Example}
\crefname{theoremC}{Theorem}{Theorems}
\crefname{propositionC}{Proposition}{Propositions}
\crefname{corollaryC}{Corollary}{Corollaries}
\crefname{lemmaC}{Lemma}{Lemmas}
\crefname{figure}{Figure}{Figures}
\numberwithin{equation}{section}
\newcommand{\defemph}[1]{\textnormal{\emph{\textbf{#1}}}}
\newcommand{\defeq}{\mathrel{{:}{=}}} 
\newcommand{\defiff}{\mathrel{{\mathop:}{\Leftrightarrow}}}
\newcommand{\pullbackcorner}[1][ul]{\save*!/#1+1.2pc/#1:(1,-1)@^{|-}\restore}
\newcommand{\abs}[1]{\lvert #1 \rvert} 
\newcommand{\all}[1]{\forall #1 .\,}
\newcommand{\some}[1]{\exists #1 .\,}
\newcommand{\limply}{\to} 
\newcommand{\lthen}{\Rightarrow} 
\newcommand{\liff}{\Leftrightarrow} 
\newcommand{\of}{\,{:}\,} 
\newcommand{\abstr}[1]{\langle#1\rangle \,}
\newcommand{\set}[1]{\{#1\}}
\newcommand{\such}{\mid}
\newcommand{\carrier}[1]{\lvert #1 \rvert} 
\newcommand{\pair}[1]{\langle #1 \rangle} 
\newcommand{\parto}{\rightharpoonup} 
\newcommand{\pow}[1]{\mathcal{P}(#1)} 
\newcommand{\powcl}[1]{\mathcal{P}_{\neg\neg}(#1)} 
\newcommand{\id}[1][]{\mathrm{id}_{#1}} 
\renewcommand{\AA}{\mathbb{A}}  
\newcommand{\PP}{\mathbb{P}}  
\newcommand{\NN}{{\mathbb{N}}}  
\newcommand{\ZZ}{\mathbb{Z}}  
\newcommand{\QQ}{\mathbb{Q}}  
\newcommand{\II}{\mathbb{I}}  
\newcommand{\KK}[1][]{\mathbb{K}^{#1}} 
\newcommand{\RRc}{\mathsf{R}_\mathsf{c}}  
\newcommand{\RRd}{\mathsf{R}}  
\newcommand{\RRcl}{\mathsf{R}_{\neg\neg}}  
\newcommand{\RR}{\mathbb{R}}  
\newcommand{\Cut}[1]{\mathsf{cut}(#1)} 
\newcommand{\ClCut}[1]{\mathsf{cut}_{\neg\neg}(#1)} 
\newcommand{\rrep}{\mathcal{r}} 
\newcommand{\srep}{\mathcal{s}} 
\newcommand{\vrep}{\comb{v}} 
\newcommand{\wrep}{\comb{w}} 
\newcommand{\compl}[1]{#1^{\mathsf{c}}} 
\newcommand{\dcompl}[1]{#1^{\mathsf{cc}}} 
\newcommand{\one}{\mathsf{1}} 
\newcommand{\two}{\mathsf{2}} 
\newcommand{\Cantor}{\two^\NN} 
\newcommand{\finseq}[1]{{#1}^{*}} 
\newcommand{\trunc}[1]{\overline{#1}} 
\newcommand{\prefix}{\sqsubseteq} 
\newcommand{\copen}[1]{{\uparrow}#1} 
\newcommand{\ClProp}{\Omega_{\neg\neg}} 
\newcommand{\mil}{\upmu} 
\newcommand{\MM}[1]{\mathbb{M}_{#1}}  
\newcommand{\TT}[1]{\mathcal{T}_{#1}} 
\newcommand{\R}[1]{\mathtt{#1}} 
\newcommand{\comb}[1]{\R{#1}} 
\newcommand{\combK}{\comb{k}} 
\newcommand{\combS}{\comb{s}} 
\newcommand{\numeral}[1]{\overline{#1}} 
\newcommand{\combCur}{\comb{cur}} 
\newcommand{\combNum}{\comb{num}} 
\newcommand{\combFst}{\comb{fst}} 
\newcommand{\combSnd}{\comb{snd}} 
\newcommand{\combPair}{\comb{pair}} 
\newcommand{\combTrue}{\comb{true}} 
\newcommand{\combFalse}{\comb{false}} 
\newcommand{\combIf}{\comb{if}} 
\newcommand{\app}[1][]{\cdot_{#1}} 
\newcommand{\defined}[1]{#1\,{\downarrow}}
\newcommand{\kleq}{\simeq} 
\newcommand{\at}{\shortmid}
\newcommand{\rz}[1][]{\Vdash_{#1}} 
\newcommand{\Ex}[1]{\mathsf{E}_{#1}} 
\newcommand{\validates}{\vDash} 
\newcommand{\objN}{\mathsf{N}} 
\newcommand{\objZ}{\mathsf{Z}} 
\newcommand{\objQ}{\mathsf{Q}} 
\newcommand{\objI}{\mathsf{I}} 
\newcommand{\invim}[1]{#1^{*}} 
\newcommand{\pr}[2][]{\upvarphi^{#1}_{#2}}
\newcommand{\rcomp}[2]{\Upphi^{#1}_{#2}}
\newcommand{\prx}[3][]{\upvarphi^{#1}_{#2}{\restriction}_{#3}} 
\newcommand{\rat}[1]{\mathsf{q}_{#1}} 
\newcommand{\ucode}[1]{\ulcorner #1 \urcorner} 
\newbox\qqBoxA
\newdimen\qqCornerHgt
\newdimen\qqArgHgt
\def\ucodeRaised #1{%
    \setbox\qqBoxA=\hbox{$#1$}%
    \qqArgHgt=\ht\qqBoxA%
    \ifnum     \qqArgHgt<\qqCornerHgt \qqArgHgt=0pt%
    \else \advance \qqArgHgt by -\qqCornerHgt%
    \fi \raise\qqArgHgt\hbox{$\ulcorner$} \box\qqBoxA %
    \raise\qqArgHgt\hbox{$\urcorner$}}
\newcommand{\Set}{\mathsf{Set}} 
\newcommand{\Heyt}{\mathsf{Heyt}} 
\newcommand{\op}[1]{#1^\mathsf{op}} 
\newcommand{\PredSymbol}[1][]{\mathsf{Pred}_{#1}}
\newcommand{\Pred}[2][]{\PredSymbol[#1](#2)}
\newcommand{\PRT}[1]{\mathsf{PRT}(#1)} 
\newcommand{\PAsm}[1]{\mathsf{PAsm}(#1)} 
\newcommand{\eq}[1][]{\sim_{#1}} 
\newcommand{\hull}{\mathsf{hull}} 
\newcommand{\usquare}{[0,1]^2} 
\newcommand{\psquare}{\partial\usquare} 
\title{The countable reals}
\date{April 1, 2026}
\author{Andrej Bauer}
\address{Faculty of Mathematics and Physics, University of Ljubljana, Slovenia}
\address{Institute of Mathematics, Physics and Mechanics, Slovenia}
\email{Andrej.Bauer@andrej.com}
\author{James E.~Hanson}
\email{jameseh@iastate.edu}
\address{Iowa State University, USA}
\begin{document}

\begin{abstract}
  We construct a topos in which the Dedekind reals are countable.

  The topos arises from a new kind of realizability, which we call \emph{parameterized realizability}, based on partial combinatory algebras whose application depends on a parameter. Realizers operate uniformly with respect to a given parameter set.

  Our construction uses a sequence of reals in $[0,1]$, discovered by Joseph Miller, that is non-diagonalizable in the sense that any real which is oracle-compu\-table uniformly from representations of the sequence must already appear in it. When used as the parameter set, this yields a topos in which the non-diagonalizable sequence becomes an epimorphism onto the Dedekind reals, rendering them internally countable.

  The resulting topos is intuitionistic: it refutes both the law of excluded middle and countable choice. Nevertheless, much of analysis survives internally. The Cauchy reals are uncountable. The Hilbert cube is countable, so Brouwer's fixed-point theorem follows from Lawvere's. The intermediate value theorem and the analytic form of the lesser limited principle of omniscience hold, while the limited principle of omniscience fails. Although no real-valued map has a jump, it remains open whether all such maps are continuous.

  Finally, the closed interval $[0,1]$, being countable, can be covered by a sequence of open intervals of total length less than any $\epsilon > 0$, with no finite subcover. Yet, we show that any cover using intervals with rational endpoints must admit a finite subcover.
\end{abstract}

\maketitle

\newpage

\section{Introduction}
\label{sec:introduction}

Georg Cantor's theorem \cite{cantor74:_ueber_eigen_inbeg_zahlen} about uncountability of the real numbers is one of the most widely known results of modern mathematics.
It is taught to all students of mathematics, and features regularly in popular accounts of mathematics.
Cantor proved his theorem by the diagonalization method, whose originality is attested by persistent attacks, which are still received -- and routinely rejected -- by journal editors~\cite{hodges98:_editor_recal_some_hopel_paper}.
It is unwise to participate in such quixotic attempts at bringing down the uncountability of the reals.

Nevertheless, we shall defy Cantor by constructing a mathematical universe, a topos\footnote{A topos is a kind of category, namely a finitely complete cartesian-closed one with a subobject classifier, that serves as a model of higher-order intuitionistic logic~\cite{jim86:_introd_higher_order_categ_logic,johnstone02:_sketc_eleph}.
We expect, but did not verify, that the results of~\cite{maschio2023} will yield a model of intuitionistic material set theory containing a surjection of the natural numbers onto the reals. In particular, the parameterized partial combinatory algebra from \cref{sec:topos-with-countable} should produce a corresponding implicative algebra in a natural way.} to be precise, in which the natural numbers map surjectively onto the reals.
We must of course pay a price: the topos is intuitionistic, as it invalidates the law of excluded middle and the axiom of choice.
Let us therefore first investigate how these two principles contribute to the standard proofs of uncountability of the reals.

Say that a set $A$ is \defemph{sequence-avoiding} if for every sequence $(a_n)_n$ in~$A$ there exists $x \in A$ such that $x \neq a_n$ for all $n \in \NN$. Clearly, when this is the case there can be no surjection $\NN \to A$.
Cantor's proof~\cite{cantor74:_ueber_eigen_inbeg_zahlen} showing that the reals are sequence-avoiding uses the method of nested intervals, which goes as follows.

\begin{theorem}
  \label{thm:R-uncountable}
  The set of reals is sequence-avoiding.
\end{theorem}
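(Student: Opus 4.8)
The plan is to give the classical nested-intervals argument for Cantor's theorem, working constructively so that it survives intuitionistically. Suppose $(a_n)_n$ is a sequence of reals; the goal is to produce a real $x$ with $x \neq a_n$ for every $n$, where $x \neq y$ means $|x - y| > 0$ (apartness). First I would construct, by recursion on $n$, a nested sequence of closed intervals $[\ell_n, r_n]$ with $\ell_n < r_n$ such that $a_n \notin [\ell_n, r_n]$, i.e. $a_n < \ell_n$ or $r_n < a_n$. The base case takes any nondegenerate interval, say $[\ell_0,r_0] = [a_0 + 1, a_0 + 2]$ so that $a_0 < \ell_0$. For the recursion step, given $[\ell_n, r_n]$ with $\ell_n < r_n$, I split it into three equal closed subintervals; by comparing $a_{n+1}$ with the two interior division points using the constructive trichotomy ``for any reals $u < v$ and any $w$, either $w < v$ or $u < w$'' (cotransitivity of $<$), at least one of the outer thirds, or the left-or-right portion, can be selected to be a nondegenerate closed interval $[\ell_{n+1}, r_{n+1}] \subseteq [\ell_n, r_n]$ entirely to one side of $a_{n+1}$. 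This keeps $\ell_n \le \ell_{n+1} < r_{n+1} \le r_n$ and $a_{n+1} \notin [\ell_{n+1}, r_{n+1}]$, with the interval lengths shrinking at least geometrically so $r_n - \ell_n \to 0$.

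Next I would observe that $(\ell_n)_n$ is a nondecreasing sequence bounded above by $r_0$, and $(r_n)_n$ is nonincreasing bounded below by $\ell_0$, with $r_n - \ell_n \to 0$; hence both are Cauchy and converge to a common limit $x$, which satisfies $\ell_n \le x \le r_n$ for all $n$. This uses completeness of the Dedekind reals, which is constructively valid. Finally, for each $n$ we have $a_n \notin [\ell_{n+1}, r_{n+1}]$ while $x \in [\ell_{n+1}, r_{n+1}] \subseteq (\ell_n, r_n)$ — more carefully, $\ell_{n+1} \le x \le r_{n+1}$ and either $a_n < \ell_{n+1}$ or $r_{n+1} < a_n$, so in either case $|x - a_n| \ge \min(x - \ell_{n+1} \text{ or } r_{n+1} - x$ gap$) $; since $\ell_{n+1} < r_{n+1}$ strictly and the interval has positive length, one gets $|x - a_n| > 0$, i.e. $x \neq a_n$. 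Thus $x$ witnesses that the reals are sequence-avoiding.

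The main point requiring care — and the step I expect to be the real obstacle — is making the recursive construction genuinely constructive: one cannot decide $a_{n+1} \in [\ell_n, r_n]$ or compare $a_{n+1}$ exactly to a division point, since order on the reals is not decidable intuitionistically. The fix is to use cotransitivity (the approximate splitting principle) and the slack afforded by dividing into three parts rather than two, so that a definite choice of subinterval can always be made while only ever asserting strict inequalities that do hold. One should also be mildly careful that the ``choice'' made at each step is a genuine function of the data (no appeal to countable choice is needed, since the trichotomy gives an explicit disjunction and we can pick, say, the leftmost admissible third by a decidable case split on which disjunct witnesses are available), so the whole construction is a single explicit recursion. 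Everything else — convergence, the final apartness estimate — is routine once the intervals are in hand.
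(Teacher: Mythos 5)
Your construction follows the same nested-interval strategy as the paper's proof, but your claim that it goes through intuitionistically is exactly where it breaks -- and the paper is explicit that it must break, since the entire point of the paper is to build a topos in which the Dedekind reals are countable, so no intuitionistic proof of this theorem can exist. (Note that \cref{thm:R-uncountable} is deliberately \emph{not} marked with the paper's asterisk for intuitionistically valid results.)

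The gap is in the step you yourself flag as the obstacle. Cotransitivity gives you, for each $n$, a proof of a disjunction such as ``$a_{n+1} < v$ or $u < a_{n+1}$'' in which \emph{both} disjuncts may hold; there is no canonical disjunct. Your proposed fix -- ``pick the leftmost admissible third by a decidable case split on which disjunct witnesses are available'' -- conflates two different things: the availability of \emph{some} witness for each individual $n$, and the existence of a single function $\NN \to \two$ selecting a witness uniformly in $n$. The question of which third is ``admissible'' is a comparison of reals, hence not decidable; and passing from $\all{n} (P(n) \lor Q(n))$ to a choice sequence is precisely an instance of countable (in fact dependent, since your interval at stage $n+1$ depends on the choice at stage $n$) choice. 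This is the same overlap problem the paper isolates in equation~\eqref{eq:R-uncountable}: one must either invoke dependent/countable choice to select a branch at each stage, or invoke excluded middle to make the case split exclusive. With both proscribed, the recursion does not define a sequence of intervals, the limit $x$ is never constructed, and the argument collapses -- as it must, given \cref{thm:countable-reals}.
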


\begin{proof}
  Let any sequence $(a_n)_n$ of reals be given.
  It suffices to construct a sequence of closed intervals $[x_n, y_n]_n$ such that, for all $n \geq 0$:
  \begin{enumerate}[label=\alph*)]
  \item $x_n \leq x_{n+1} < y_{n+1} \leq y_n$,
  \item\label{it:width-converge} $y_n - x_n \leq 2^{-n}$,
  \item $a_n < x_{n+1}$ or $y_{n+1} < a_n$.
  \end{enumerate}
  Indeed, the limit $\ell \defeq \lim_n x_n = \lim_n y_n$ will exist and satisfy $a_n \neq \ell$ for all $n \in \NN$, because either $a_n < x_{n+1} \leq \ell$ or $\ell \leq y_{n+1} < a_n$.

  Start with any $[x_0, y_0]$ satisfying the first two conditions, for example $[0, 1]$, and define recursively
  \begin{equation}
    \label{eq:R-uncountable}%
    [x_{n+1}, y_{n+1}] \defeq
    \begin{cases}
      [x_n, (4 x_n + y_n)/5] &\text{if $(3 x_n + 2 y_n)/5< a_n$,}\\
      [(x_n + 4 y_n)/5, y_n] &\text{if $a_n < (2 x_n + 3 y_n)/5$}.
    \end{cases}
  \end{equation}
  %
  %
  In both cases the relevant inequalities are satisfied.
\end{proof}

How precisely does \eqref{eq:R-uncountable} specify a sequence, given that the two options overlap when $(3 x_n + 2 y_n)/5 < a_n < (2 x_n + 3 y_n)/5$?
We may \emph{choose} one for each~$n$ by appealing to the axiom of dependent choice,\footnote{Dependent choice is a version of the axiom of choice in which the next choice may depend on the previous ones. The proof of \cref{thm:R-uncountable} can be improved to rely on the weaker countable choice only: choose ahead of time, for all rationals $q < r$ and $n \geq 0$, either $a_n < q$ or $r < a_n$. Then make sure $(x_n)_n$ and $(y_n)_n$ are rational sequences by picking rational $x_0$ and $y_0$, and follow the choices so made when choosing an option in~\eqref{eq:R-uncountable}.} or \emph{remove} the overlap by using the law of excluded middle, which lets us take the first option if available and the second one otherwise. When both principles are proscribed, the proof crumbles.

Mathematics without the law of excluded middle and the axiom of countable choice is \emph{intuitionistic} or \emph{constructive}.\footnote{Some schools of constructive mathematics accept the axiom of countable choice, notably Erret Bishop's~\cite{bishop67:_found_const_analy}, but we eschew it because it validates \cref{thm:R-uncountable}.} Among the available varieties~\cite{bridges87:_variet_const_mathem},
intuitionistic higher-order logic~\cite{jim86:_introd_higher_order_categ_logic} suits us because it is valid in any topos.
In many respects intuitionistic and classical mathematics are alike, but there are also important differences, which we review before proceeding.

\subsection{Countable and uncountable sets in intuitionistic mathematics}
\label{sec:countabe-set-intuit}

Classically equivalent notions may bifurcate in intuitionistic mathematics, and countability is one of them.
We say that a set $A$ is \defemph{countable} if there is a surjection\footnote{The disjoint sum $\one + A$ with the singleton set $\one = \set{\star}$ make it possible to enumerate the~$\emptyset$ with the sequence of~$\star$'s.} $\NN \to \one + A$, which for an inhabited\footnote{A set is $A$ inhabited if there exists $x \in A$, thus not empty. The converse is not available intuitionistically.} set $A$ is equivalent to existence of a surjection $\NN \to A$. A set is \defemph{uncountable} if it is not countable. A sequence-avoiding set is uncountable.

Definitions of countability in terms of injection into~$\NN$ misbehave intuitionistically,
because a subset of a countable set need not be countable. The phenomenon appears in its extreme form in the
realizability topos over infinite-time Turing machines~\cite{bauer15:_baire}. In it the set\footnote{We allow ourselves to refer to the objects of a topos as ``sets'', especially when arguing internally to a topos. After all,  intuitionistic higher-order logic is a form of structural set theory, see~\cite[Sec.~2.2]{taylor99:_pract_found_mathem} and~\cite{lawvere03:_sets_mathem}.} of binary sequences $\Cantor$, where $\two \defeq \set{0,1}$, and the reals $\RR$ are both sequence-avoiding, respectively by \cref{cor:cantor-diagonal} and \cref{thm:R-uncountable} combined with the fact that a realizability topos validates the axiom of dependent choice.
At the same time, in this topos both $\Cantor$ and~$\RR$ embed into~$\NN$!

One should not succumb to a Skolem-style paradox~\cite{skolem23:_einig_bemer_begru_mengen} by conflating external and internal notions of countability.
Every set appearing in a countable model of Zermelo-Fraenkel set theory is externally countable because the whole model is, but the model validates excluded middle and hence internal uncountability of the reals.
In the effective topos~\cite{hyland82} the reals are internally uncountable because the topos validates dependent choice, hence \cref{thm:R-uncountable} works again, but are externally countable as there are only countably many Turing-computable reals.
The moral is that we should \emph{always} consider countability internally to a topos, and so far no topos is known in which the reals are internally countable~\cite{blechschmidt18}.

Georg Cantor gave another proof by diagonalization \cite{cantor79:_ueber_punkt}, which is intuitionistically valid.
We shall mark intuitionistically valid theorems with an asterisk~${}^\star$, to keep a record of those that are applicable internally to a topos.

\begin{theoremC}[Cantor]
  \label{cor:cantor-diagonal}
  The set of binary sequences $\Cantor$ is sequence-avoiding.
\end{theoremC}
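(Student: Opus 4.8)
The plan is to run Cantor's diagonal argument, observing that --- unlike in the proof of \cref{thm:R-uncountable} --- no appeal to choice or the law of excluded middle is needed, because the relevant ``flip a bit'' operation is given by an explicit formula and equality on $\two = \set{0,1}$ is decidable.

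First, given a sequence $(a_n)_n$ in $\Cantor$, I would define the \emph{anti-diagonal} $x \in \Cantor$ by
\[
  x(n) \defeq 1 - a_n(n),
\]
which is well-defined since $1 - (\,\cdot\,)$ is a total function $\two \to \two$, so $a_n(n) \in \set{0,1}$ forces $x(n) \in \set{0,1}$ with no case split required at this stage.

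Next, I would fix $n \in \NN$ and show $x \neq a_n$. By construction $x(n) = 1 - a_n(n)$, and since equality of elements of $\two$ is decidable we may check in each of the two cases $a_n(n) = 0$ and $a_n(n) = 1$ that $1 - a_n(n) \neq a_n(n)$; hence $x(n) \neq a_n(n)$. Therefore $\neg\,(\forall m \in \NN.\, x(m) = a_n(m))$, i.e.\ $x \neq a_n$. (In fact this exhibits $x$ as \emph{apart} from every $a_n$, with the index $n$ itself serving as the separating coordinate, a conclusion stronger than mere $\neq$.) Since $n$ was arbitrary, $x$ witnesses that $\Cantor$ is sequence-avoiding, and by the earlier observation that sequence-avoiding sets are uncountable, $\Cantor$ is uncountable.

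There is essentially no obstacle here; the only point worth stressing is the contrast with \cref{thm:R-uncountable}, where the two construction options genuinely overlap and must be disambiguated by choice or excluded middle, whereas the diagonal sequence $x$ is a bona fide function of $(a_n)_n$ definable in bare intuitionistic logic. This is precisely why the statement deserves its asterisk and will remain available internally to any topos.
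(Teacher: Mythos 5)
Your proposal is correct and is essentially the paper's own proof: both construct the anti-diagonal sequence by flipping the $n$-th bit of the $n$-th sequence (the paper writes $e(k)(k)+1$, you write $1-a_n(n)$, the same operation on $\two$) and observe it differs from each $a_n$ at coordinate $n$. The additional remarks on decidability of equality in $\two$ and the contrast with \cref{thm:R-uncountable} are accurate but not needed beyond what the paper already records.
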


\begin{proof}
  Given any $e : \NN \to \Cantor$, the map $k \mapsto e(k)(k) + 1$ differs from $e(n)$ because $e(n)(n) \neq e(n)(n) + 1$.
\end{proof}

Beware, uncountability of $\Cantor$ and~$\RR$ are unrelated intuitionistically.
We cannot exhibit a surjection in either direction,\footnote{It is consistent with intuitionistic logic~\cite[Sec.~4.6]{troelstra88:_const_mathem} that every map $\RR \to \Cantor$ is continuous, hence constant. In reverse direction, an epimorphism $\Cantor \to [0,1]$ in the topos of sheaves on~$\RR$ would have a continuous right inverse on a small enough interval, but such an inverse would have to be constant.}
and in particular we cannot show that every real has a binary, or decimal, expansion. Speaking intuitionistically, the proof of Cantor's theorem that applies diagonalization to decimal expansions of reals proves only that the set of decimal sequences is sequence-avoiding, not the reals themselves.

Likewise, observing that the powerset\footnote{Another surprise: $\Cantor$ and $\pow{\NN}$ need not be isomorphic. The former comprises the \emph{decidable} subsets of~$\NN$, which are those that either contain any given number or do not. In the absence of the law of excluded middle we cannot show that every subset of~$\NN$ is decidable.} $\pow{\NN}$ is sequence-avoiding says nothing about countability of~$\RR$.
And since a countable set may contain an uncountable subset, embedding $\Cantor$ into~$\RR$ is not helpful either.

\subsection{Real numbers in intuitionistic mathematics}
\label{sec:real-numb-intu}

The reals may be constructed in several ways~\cite[Sec.~D4.7]{johnstone02:_sketc_eleph}: the \emph{Cauchy reals} are a quotient of the set of Cauchy sequences of rationals, the \emph{Dedekind reals} are formed as Dedekind cuts of rationals, and the \emph{MacNeille reals} are the conditional order-completion of rationals.
Classically these all yield isomorphic ordered fields, but generally differ in a constructive setting.\footnote{The Cauchy and Dedekind reals coincide when the axiom of countable choice holds, see the paragraph preceding~\cite[Thm.~D4.7.12]{johnstone02:_sketc_eleph}.} Which ones should we use?

The real numbers, whatever they are, ought to form a metrically complete space. Thus we must disqualify the Cauchy reals, because without the axiom of countable choice we cannot show that a Cauchy sequence of Cauchy reals has a limit which is a Cauchy real \cite[Thm.~6.1]{lubarsky07}.

MacNeille reals are pleasingly complete: every inhabited bounded subset has a supremum.
This can be used to prove that they are sequence-avoiding without the law of excluded middle, the axiom of choice, and without diagonalization.

\begin{theoremC}
  \label{thm:fixed-point-R-uncountable}
  If every inhabited bounded set of reals has a supremum then the reals are sequence-avoiding.
\end{theoremC}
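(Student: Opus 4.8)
The plan is to localize to a compact interval and then pull the required real out as an order‑theoretic fixed point supplied by completeness, so that no dependent or countable choice is ever used. First I would fix an order isomorphism $f\colon\RR\to(\tfrac{1}{8},\tfrac{7}{8})$ (e.g.\ $f(t)=\tfrac12+\tfrac1{2\pi}\arctan t$) and put $b_n:=f(a_n)$; since $f$ is a bijection it suffices to find $x\in[\tfrac14,\tfrac34]$ with $x\neq b_n$ for all $n$, because then $f^{-1}(x)\neq a_n$ for all $n$.

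Next, fix radii $\epsilon_n>0$ with $\sum_n\epsilon_n<\tfrac18$ and consider the countable family of open intervals $W_0:=(-1,\tfrac14)$, $W_1:=(\tfrac34,2)$, and $W_{n+2}:=(b_n-\epsilon_n,\,b_n+\epsilon_n)$, writing $W_m=(p_m,q_m)$. Morally $\bigcup_m W_m$ covers $[0,1]$ to total length $<1$, so there ought to be a point of $[0,1]$ outside it, which then automatically lies in $[\tfrac14,\tfrac34]$ and is apart from every $b_n$. To produce such a point constructively I would introduce the continuous nondecreasing coverage function
\[
  h\colon[0,1]\to\RR,\qquad h(t):=\sum_m\bigl(\min(t,q_m)-\min(t,\max(p_m,0))\bigr),
\]
which is well defined because the tail sum is controlled by $\sum_m\bigl(\min(1,q_m)-\max(p_m,0)\bigr)<1$, and which satisfies $h(0)=0$, $h(1)<1$, and the key local estimate $h(t)-h(s)\ge t-s$ whenever $[s,t]\subseteq W_m$ for some $m$ (inside $W_m$ the $m$‑th summand has slope exactly $1$). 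Consequently the defect function $g(t):=t-h(t)$ is continuous with $g(0)=0<g(1)=1-h(1)$, and is nonincreasing on every $W_m\cap[0,1]$.

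Now let $B:=\{\,t\in[0,1]\mid g(s)\le g(t)\text{ for all }s\in[t,1]\,\}$ be the set of points that are running maxima of $g$ from the right. Then $1\in B$ and $B$ is bounded below, so by completeness (applied to $\{-t\mid t\in B\}$) the infimum $x:=\inf B$ exists. A short limit argument using continuity of $g$ shows $x\in B$, so $g(x)=\sup_{[x,1]}g$; and since $g(x)\ge g(1)>g(0)$, continuity of $g$ at $0$ together with cotransitivity of $<$ forces $x>0$. The crux is then to show $x\notin W_m$ for every $m$: assuming $x\in W_m$, pick $\delta>0$ with $[x-\delta,x]\subseteq W_m\cap[0,1]$ and set $s:=x-\tfrac\delta2<x$; since $g$ is nonincreasing on $[x-\delta,x]$ we get $\sup_{[s,x]}g=g(s)\ge g(x)=\sup_{[x,1]}g$, hence $\sup_{[s,1]}g=g(s)$, i.e.\ $s\in B$, contradicting $s<x=\inf B$. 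Then $x\notin W_0$ and $x\notin W_1$ yield $x\in[\tfrac14,\tfrac34]$, while $x\notin W_{n+2}$ yields $x\neq b_n$ for all $n$, and pulling $x$ back through $f$ finishes the proof. (Equivalently, $x$ is the least fixed point of the monotone inflationary self‑map $t\mapsto\inf\{u\in[t,1]\mid g(u)=\sup_{[u,1]}g\}$ of $[0,1]$, whose existence is exactly what completeness provides; this is the fixed point that lends the theorem its name.)

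The main obstacle is eliminating choice. The naive attempt --- rerun the nested‑interval argument of \cref{thm:R-uncountable} with completeness in place of dependent choice --- fails because one must still pick, at each stage, which half of the current interval to keep when $a_n$ lies in the overlap region, and simply taking the supremum of the complement $[0,1]\setminus\bigcup_m W_m$ does not rescue it: without the guard intervals that supremum is the useless value $1$, and with them one cannot even show the complement inhabited. The device that works is to encode all the $b_n$ simultaneously into the single continuous function $g$ and extract the dodging point as $\inf B$, a construction using only comprehension and the supremum hypothesis. The residual care is the usual constructive discipline: one cannot case‑split on the position of a real, so e.g.\ $x>0$ must be obtained via cotransitivity and continuity, and the guard intervals $W_0,W_1$ are present precisely so that $x$ cannot slide off to an endpoint of $[0,1]$, where the argument would break.
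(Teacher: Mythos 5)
Your argument is correct, but it is genuinely different from the proof in the paper. The paper (following Blechschmidt) encodes the whole sequence into the single monotone map $f(x)=\sup\{2^{-n_1}+\cdots+2^{-n_k}\mid \max\{a_{n_1},\ldots,a_{n_k}\}<x\}$ on $[0,2]$ and takes the \emph{largest} post-fixed point supplied by the Knaster--Tarski theorem; that point avoids the sequence because any $a_n$ which were a post-fixed point would have the strictly larger post-fixed point $a_n+2^{-n}$ above it. You instead run the choice-free Borel argument that intervals of total length less than $1$ cannot cover $[0,1]$, extracting the uncovered point as $\inf B$ where $B$ is the set of running right-maxima of the defect function $g$, and ruling out $x\in W_m$ by local monotonicity of $g$ on $W_m$. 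Both proofs globalize the construction so that no choices remain, and both are order-theoretic fixed-point arguments at heart, but the mechanisms are distinct, and the trade-off is one of generality versus concreteness: your route needs cotransitivity of $<$, $\neg\neg$-stability of $\leq$ (to get $x\in B$, $x>0$, and the case-free identity $\sup_{[s,1]}g=g(s)$), and continuity of $g$, all of which hold for the Dedekind reals, so you do prove the stated theorem; but the paper's proof uses nothing beyond the ordered-ring structure and order-completeness, so it applies verbatim to the MacNeille reals --- which need not be located --- and that is the application the theorem serves in \cref{sec:real-numb-intu}. Your construction is, pleasingly, the contrapositive of the singular-cover phenomenon of \cref{sec:comp-clos-interv}: in $\TT{\mil}$ singular covers exist precisely because the Dedekind reals there fail to be order-complete. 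One trivial slip: $t\mapsto\frac12+\frac{1}{2\pi}\arctan t$ has range $(\frac14,\frac34)$, not $(\frac18,\frac78)$, so with that particular $f$ your point $x\in[\frac14,\frac34]$ could land outside the range; rescale the example so that the range genuinely contains $[\frac14,\frac34]$.
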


\begin{proof}
  See \cite{blechschmidt19:_knast_tarsk} for details. Briefly, given a sequence of reals $(a_n)_n$,
  the map $f : [0,2] \to [0,2]$, defined by
  \begin{equation*}
    f(x) \defeq \sup
    \set{ 2^{-n_1} + \cdots + 2^{-n_k} \such
      \max \set{a_{n_1}, a_{n_2}, \ldots, a_{n_k}} < x
    },
  \end{equation*}
  where $n_1 < n_2 < \cdots < n_k$,
  is well-defined by order-completeness and is monotone.
  If $a_n \leq f(a_n)$ then
  \begin{equation*}
    a_n + 2^{-n} \leq
    f(a_n) + 2^{-n} \leq
    f(a_n + 2^{-n}),
  \end{equation*}
  therefore if a term of the sequence is a post-fixed point then there is a post-fixed point above it.
  Consequently, the largest post-fixed point of~$f$, which exists by Tarski's fixed-point theorem~\cite[Thm.~1]{Tarski55}, cannot be a term of the sequence.
\end{proof}

Unfortunately, the MacNeille reals may fail to be even a local ring~\cite[Thm.\ D4.7.11]{johnstone02:_sketc_eleph}. We therefore adhere to the generally accepted view that the Dedekind reals are the most suitable intuitionistic notion of real numbers. We shall review their construction in detail in \cref{sec:dedek-real-numb}.

\subsection{Overview}
\label{sec:overview}

Having found no intuitionistic proofs of uncountability of the Dedekind reals, it is time to lay out a battle plan.

  Our weapon of choice against the diagonalization method will be sequences constructed by Joseph Miller~\cite{miller04:_cont_deg}. Speaking imprecisely, a sequence ${\mil : \NN \to [0,1]}$ is ``non-diagonalizable'' when any real in $[0,1]$ computed by an oracle Turing machine, uniformly in oracles representing the sequence~$\mil$, already appears in the sequence~$\mil$.
  A precise definition and Miller's construction are reviewed in \cref{sec:non-diag-sequ}.

In order to construct a topos in which a non-diagonalizable sequence appears, we devise a new kind of realizability in which logical formulas are realized uniformly in a given set of parameters. \Cref{sec:parameterized-part-comb} introduces parameterized partial combinatory algebras, which serve in \cref{sec:unif-real} to define parameterized realizability toposes using the tripos-to-topos construction. We felt it prudent to provide a generous amount of detail in these sections, even though the connoisseurs will recognize a variation on a familiar theme.

In \Cref{sec:real-numbers-object} we review the construction of Dedekind reals and formulate it in a way that makes it easy to compute the object of Dedekind reals in a parameterized realizability topos. We also calculate the object of Cauchy reals, and show that it is uncountable.

Victory is achieved in \cref{sec:topos-with-countable}. We focus on a particular parameterized realizability topos whose realizers are oracle-computable partial maps, with oracles ranging over representations of a given non-diagonalizable sequence~$\mil : \NN \to [0,1]$. We prove that $\mil$ appears in the topos as an epimorphism, from which countability of the reals follows.
We also show that the Hilbert cube is countable in the topos.

In \cref{sec:analysis-topos-tt} we take a closer look at the reals in the new topos.
First we derive the finite- and infinite-dimensional Brouwer's fixed-point theorem as trivial consequences of Lawvere's fixed-point theorem. The 1-dimensional case yields the intermediate value theorem, as well as the constructive taboo\footnote{In constructive mathematics we sometimes refer to a constructively undecided statement as a ``taboo'', especially when the statement is a consequence of excluded middle.} ${\all{x, y \in \RR} x \leq y \lor x \geq y}$. However, the topos invalidates $\all{x, y \in \RR} x = y \lor x \neq y$, as well as the taboo known as the Limited Principle of Omniscience (LPO).
We show that there are no maps $\RR \to \RR$ with a jump, but are unable to answer the more interesting question whether all such maps are continuous.
Finally, we observe that there is a countable cover of $[0,1]$ by open intervals whose lengths add up to any desired $0 < \epsilon < 1$, whence such a cover has no finite subcover. In contrast, we show that any countable cover by open intervals with rational endpoints must have a finite subcover. It is a strange topos indeed.


\section{Non-diagonalizable sequences}
\label{sec:non-diag-sequ}

As a preparation for the realizability model constructed in \cref{sec:topos-with-countable} we review the construction of non-diagonalizable sequences developed by Joseph Miller~\cite{miller04:_cont_deg}.

\subsection{Oracle-computable maps and coding of objects}
\label{sec:oracle-comp-maps}

We let lower Greek letters $\alpha$, $\beta \in \Cantor$ denote infinite binary sequences, and refer to them as \defemph{oracles}.
We write $\finseq{\two}$ for the set of finite binary sequences.
An oracle~$\alpha$ may be truncated to the first~$n$ terms to give a finite sequence $\trunc{\alpha}(n) \defeq [\alpha(0), \ldots, \alpha(n-1)]$.
Let~$\prefix$ be the prefix relation on sequences, i.e., $a \prefix b$ holds when~$b$ is an extension of~$a$, and similarly for $a \prefix \alpha$.
The sets $\copen{a} \defeq \set{\alpha \in \Cantor \such a \prefix \alpha}$ form a compact-open basis for the product topology on~$\Cantor$.

Given an oracle $\alpha \in \Cantor$, a partial map $f : \NN \parto \NN$ is \defemph{$\alpha$-computable} if it is computed by a Turing machine with access to the oracle~$\alpha$~\cite[Sec.~9.2]{rogers67:_theor_recur_funct_effec_comput}. Each such machine can be coded as a number, yielding a numbering $\pr[\alpha]{0}, \pr[\alpha]{1}, \pr[\alpha]{2}, \ldots$ of all partial $\alpha$-computable maps.
The codes describing machines are independent of the oracles. For example, there is a single index $i \in \NN$ such that $\pr[\alpha]{i} = \alpha$ for all $\alpha \in \Cantor$, and for any partial computable $f : \NN \parto \NN$ there is $j \in \NN$ such that $\pr[\alpha]{j} = f$ for all $\alpha \in \Cantor$.

Next we set up coding of mathematical objects.
Let $\pair{\Box, \Box} : \NN \times \NN \to \NN$ be a computable bijection, also known as a \defemph{pairing},
and let $\pi_1, \pi_2 : \NN \to \NN$ be the associated computable projections $\pi_1 \pair{m, n} = m$ and $\pi_2 \pair{m, n} = n$.
Let $\rat{} : \NN \to \QQ$ be a computable bijection enumerating the rationals.
Say that $f : \NN \to \NN$ \defemph{represents} $x \in \RR$ when
$\all{n \in \NN} \abs{x - \rat{f(n)}} < 2^{-n}$.
That is, $f$ enumerates (codes of) rationals that converge to~$x$ with convergence modulus~$2^{-n}$. We call such a sequence \defemph{rapidly} converging.

An oracle~$\alpha$ can be construed as the binary digit expansion of a real $\rrep(\alpha) \in [0,1]$, namely
\begin{equation*}
  \textstyle
  \rrep(\alpha) \defeq \sum_{i=0}^\infty \alpha(i) \cdot 2^{-i-1}.
\end{equation*}
The resulting map $\rrep : \Cantor \to [0,1]$ is a continuous surjection.\footnote{It is possible to arrange for~$\rrep$ to be an open quotient map, but for our purposes a continuous map that is classically surjective will do.}
From an oracle~$\alpha$ we may compute a map representing $\rrep(\alpha)$, i.e., there is $\vrep \in \NN$ such that, for all $\alpha \in \Cantor$, the map $\pr[\alpha]{\vrep}$ is total and it represents~$\rrep(\alpha)$, concretely:
\begin{equation}
  \label{eq:vrep}%
  \textstyle
  \rat{\pr[\alpha]{\vrep}(n)} = \sum_{i=0}^{n + 1} \alpha(i) \cdot 2^{-i-1}.
\end{equation}

Finally, we provide coding of sequences $\NN \to [0,1]$ with oracles. For this purpose define $\srep : \Cantor \to [0,1]^\NN$ by
\begin{equation}
  \label{eq:srep}%
  \srep(\alpha)(n) \defeq \rrep(m \mapsto \alpha(\pair{n,m}),
\end{equation}
which too is a continuous surjection.
Once again we may convert oracles to representing maps, because there is~$\wrep \in \NN$ such that $\pr[\alpha]{\R{\wrep}}(n)$ is the code of a map representing $\srep(\alpha)(n)$, concretely:
\begin{equation}
  \label{eq:wrep}%
  \pr[\alpha]{\pr[\alpha]{\R{\wrep}}(n)}(m) =
  \pr[\alpha]{\vrep}(\pair{n, m}).
\end{equation}
If $\srep(\alpha) = \srep(\beta)$ then $\pr[\alpha]{\wrep}(n)$ and $\pr[\beta]{\wrep}(n)$ both code the real~$\srep(\alpha)(n)$. In terms of \cref{def:sequence-computable} given below, $\pr[\alpha]{\wrep}(n)$ is an $\srep(\alpha)$-index of the real $\srep(\alpha)(n)$.

\subsection{Miller sequences}
\label{sec:miller-sequences}

Let us recall why in a duel between an oracle and diagonalization the latter wins.
Given $\alpha \in \Cantor$, say that~$x \in \RR$ is \defemph{$\alpha$-computable} if there is $n \in \NN$ such that $\pr[\alpha]{n}$ represents~$x$.
One might hope to construct an oracle~$\alpha \in \Cantor$ representing a sequence $a = \srep(\alpha) : \NN \to [0,1]$ that enumerates all $\alpha$-computable reals in~$[0,1]$, so that any $\alpha$-computable attempt to generate a real avoiding~$a$ would fail.
But this is not possible, because the diagonalization procedure described in the proof of~\cref{thm:R-uncountable} is itself $\alpha$-computable.
In particular, the choice in~\eqref{eq:R-uncountable} can be carried out $\alpha$-computably, one just has to compute a sufficiently precise rational approximation of~$a_n$.
The approximation, and therefore the choice and the resulting limit~$\ell$, may depend on~$\alpha$, but this in itself is not a problem.

An ingenuous insight of Joseph Miller's~\cite{miller04:_cont_deg} was that diagonalization \emph{can} be overcome, if we require oracle computations of reals to depend only on the sequence~$a$, and not the oracle~$\alpha$ representing it. In the following definition and elsewhere we write $\invim{f}$ for the inverse image map of~$f$.

\begin{definition}
  \label{def:sequence-computable}
  Given a sequence $a : \NN \to [0,1]$, say that $x \in \RR$ is \defemph{$a$-computable} if there is $n \in \NN$, called an \defemph{$a$-index}, such that $\pr[\alpha]{n}$ represents~$x$, for all $\alpha \in \invim{\srep}(a)$.
  If $n$ is an $a$-index, we define $\rcomp{a}{n}$ to be the real computed by $\pr[\alpha]{n}$, for any oracle $\alpha \in \invim{\srep}(a)$. Otherwise, $\rcomp{a}{n}$ is undefined.
\end{definition}

The diagonalization procedure from the proof of~\cref{thm:R-uncountable} is not $a$-comput\-able in the sense of the above definition, but perhaps there is a cleverer one that is? No.

\begin{theorem}[Miller]
  \label{thm:miller-sequence}%
  There exists a sequence $\mil : \NN \to [0,1]$ such that, for all $n \in \NN$, if
  $n$ is an $\mil$-index then $\mil(n) = \rcomp{\mil}{n}$.
\end{theorem}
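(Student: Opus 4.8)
The plan is to construct $\mil$ as the sequence coded by an oracle $\alpha$ obtained from a fixed-point (diagonal) argument that forces $\alpha$ to already contain every real it could $a$-computably produce. The key realization is that being an $a$-index is an intrinsic property of a sequence $a = \srep(\alpha)$, independent of the oracle, so we can try to build $\alpha$ in stages so that whenever $n$ turns out to be an $\srep(\alpha)$-index, the value $\rcomp{\srep(\alpha)}{n}$ has been written into the appropriate column $\{m \mapsto \alpha(\pair{n,m})\}$ of $\alpha$. First I would set up a priority/requirement list: for each $n \in \NN$ we have a requirement $R_n$ demanding that $\srep(\alpha)(n) = \rcomp{\srep(\alpha)}{n}$ whenever $n$ is an $\srep(\alpha)$-index. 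The subtlety is that we do not know in advance which $n$ are indices, so we treat every $n$ as a ``potential'' index and arrange that column $n$ of $\alpha$ equals the real that $\pr[\alpha]{n}$ computes — if $\pr[\alpha]{n}$ fails to represent a real, or fails to do so uniformly over $\invim{\srep}(\srep(\alpha))$, then $n$ is simply not an $\srep(\alpha)$-index and $R_n$ is vacuous.

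The natural way to make this precise is a recursion-theoretic fixed-point construction. I would define a continuous (indeed computable) operator $\Gamma$ on $\Cantor$, or better on a space of finite-condition approximations, such that for a fixed point $\alpha = \Gamma(\alpha)$ the column $n$ of $\alpha$ records, bit by bit, the output of $\pr[\alpha]{n}$ interpreted via $\rrep$ — with the caveat that we must interleave the computation of $\pr[\alpha]{n}$ (which only queries finitely much of $\alpha$ before producing each approximation) with the writing of its answer back into $\alpha$. Because each finite stage of $\pr[\alpha]{n}$ depends on only finitely many bits of $\alpha$, and the bit we write at position $\pair{n,m}$ depends only on the first $m+1$ binary digits of $\rcomp{}{n}$ (up to the usual dyadic ambiguity), one can organize the stages so that no bit depends on itself: this is where a careful bookkeeping argument, using that the binary-expansion map $\rrep$ can be computed digit-by-digit from a representing sequence with a fixed delay, is needed. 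The resulting $\Gamma$ is continuous on $\Cantor$, and since $\Cantor$ is compact one obtains a fixed point; alternatively, and more robustly, one phrases the whole thing as a single application of the recursion theorem, building an index $e$ for a machine that, with oracle $\alpha$, simulates $\pr[\alpha]{n}$ on column $n$ and copies out its bits, and taking $\alpha \defeq \pr[\beta]{e}$ for a suitable $\beta$ — or directly solving $\alpha = \pr[\alpha]{e}$ by a priority-free stagewise construction.

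Once $\alpha$ is constructed, I would verify the theorem's conclusion: set $\mil \defeq \srep(\alpha)$, fix $n$, and suppose $n$ is a $\mil$-index, so $\pr[\beta]{n}$ represents one and the same real $\rcomp{\mil}{n}$ for every $\beta \in \invim{\srep}(\mil)$ — in particular for $\beta = \alpha$. By construction, column $n$ of $\alpha$ was written to be a (rapidly converging binary) code for exactly the real computed by $\pr[\alpha]{n}$, so $\rrep(m \mapsto \alpha(\pair{n,m})) = \rcomp{\mil}{n}$, i.e. $\mil(n) = \srep(\alpha)(n) = \rcomp{\mil}{n}$. The only place where care is required is the asymmetry between ``$\pr[\alpha]{n}$ represents \emph{some} real'' and ``$n$ is a $\mil$-index'': the construction should write into column $n$ whatever $\pr[\alpha]{n}$ produces regardless, and then the hypothesis that $n$ is a $\mil$-index upgrades this to a genuine equality of reals because $\alpha$ itself lies in $\invim{\srep}(\mil)$.

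The main obstacle is the self-reference in the construction: we are asking the oracle $\alpha$ to encode the behavior of oracle machines run \emph{with $\alpha$ itself as oracle}. Making this non-circular requires showing that the $m$-th bit of column $n$ can be determined from strictly earlier-committed bits of $\alpha$, which in turn rests on a use/convergence bound — each approximation $\pr[\alpha]{n}(k)$ queries only a finite prefix of $\alpha$, and we must schedule the construction (e.g. by a diagonal ordering of pairs $\pair{n,m}$ together with a ``wait until enough of $\alpha$ is decided'' clause) so that these finite dependencies never form a cycle. I expect this scheduling-and-convergence argument, together with handling the dyadic ambiguity of binary expansions (which is why the paper works with rapidly converging rational sequences and the redundant map $\rrep$ rather than with honest binary digits), to be the technical heart of the proof; the rest is a routine unwinding of the definitions of $\srep$, $a$-index, and $\rcomp{a}{n}$.
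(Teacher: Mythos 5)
Your approach has a fatal flaw: the exact self-referential condition you are trying to enforce --- that column $n$ of $\alpha$ literally encodes the real computed by $\pr[\alpha]{n}$ --- is in general unsatisfiable, and no scheduling or bookkeeping can repair this. Consider the index $n$ of the machine that, on any input, reads the single bit $\alpha(\pair{n,0})$ and outputs a rational code for the real $1 - \alpha(\pair{n,0})$. If $\alpha(\pair{n,0}) = 0$ the machine represents $1$, whose expansion under $\rrep$ forces $\alpha(\pair{n,0}) = 1$; if $\alpha(\pair{n,0}) = 1$ it represents $0$, forcing $\alpha(\pair{n,0}) = 0$. This is a one-bit dependency cycle that cannot be broken by any diagonal ordering or ``wait until enough of $\alpha$ is decided'' clause. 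The only way to satisfy the theorem for this $n$ is to arrange $\mil(n) = \sfrac{1}{2}$ exactly, so that $\invim{\srep}(\mil)$ contains oracles with \emph{both} values of the bit $\pair{n,0}$ and $n$ fails to be a $\mil$-index. Your construction has no mechanism for hitting such knife-edge values; indeed it cannot, since a computable or continuous stagewise procedure cannot decide whether a real it is building equals $\sfrac{1}{2}$. Two further technical claims you lean on are also false: a continuous self-map of the compact space $\Cantor$ need not have a fixed point (the bit-flip map has none --- $\Cantor$ is totally disconnected, so no Brouwer-type theorem applies), and the recursion theorem produces a fixed point of an index transformation, not a solution of $\alpha = \pr[\alpha]{e}$ as an equation on oracles.

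The missing idea, which is the heart of Miller's argument, is to \emph{relax} the requirement from an equation to a membership in a convex set and then invoke a genuinely non-constructive fixed-point theorem. One assigns to each index $n$ and oracle $\alpha$ a closed interval $\mathbf{I}^\alpha_n$ (equal to $[x,x]$ when $\pr[\alpha]{n}$ represents $x$, and wider otherwise), takes $\mathbf{J}^a_n$ to be the convex hull of $\bigcup_{\alpha \in \invim{\srep}(a)} \mathbf{I}^\alpha_n$, and applies a Kakutani-type fixed-point theorem to the upper semicontinuous, non-empty-convex-valued map $a \mapsto \prod_n \mathbf{J}^a_n$ on $[0,1]^\NN$ to obtain $\mil$ with $\mil(n) \in \mathbf{J}^\mil_n$ for all $n$. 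In my example $\mathbf{J}^a_n$ is all of $[0,1]$ when $a(n) = \sfrac{1}{2}$, so the constraint becomes satisfiable, and when $n$ \emph{is} a $\mil$-index the hull degenerates to the single point $[\rcomp{\mil}{n}, \rcomp{\mil}{n}]$, which is exactly the conclusion. The resulting $\mil$ is not computable and cannot be: the non-constructivity of the fixed-point theorem is essential, not an artifact of the proof. Your final verification paragraph (that being a $\mil$-index upgrades agreement with the specific oracle $\alpha$ to agreement with $\rcomp{\mil}{n}$) is sound, but it sits on top of a construction that does not exist.
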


Miller suggests ``diagonally not computably diagonalizable'' as a possible name for a sequence satisfying the stated condition. We shall call it a \defemph{Miller sequence}.
In the remainder of this section we recount the original construction~\cite[Thm.~6.3]{miller04:_cont_deg}.

\subsubsection{The interval domain}
\label{sec:interval-domain}

Let
\begin{equation*}
  \II \defeq \set{[u,v] \subseteq [0,1] \such 0 \leq u \leq v \leq 1}
\end{equation*}
be the collection of all closed sub-intervals of~$[0,1]$.
If we think of an interval $[u,v]$ as an approximate real, then it makes sense to order~$\II$ by reverse inclusion~$\supseteq$ so that the zero-width intervals~$[u,u]$ are the maximal elements.

Not every $\pr[\alpha]{n}$ represents a real, but it can be seen to represent an element of~$\II$, as follows.
For $\alpha \in \Cantor$ and $n, j \in \NN$ let the \defemph{$j$-th truncation} $\prx[\alpha]{n}{j} : \NN \to {\set{\star} \cup \NN}$ be
\begin{equation*}
  \prx[\alpha]{n}{j}(k) \defeq
  \begin{cases}
    \pr[\alpha]{n}(k) &
      \begin{aligned}[t]
        &\text{if the $n$-th machine with oracle~$\alpha$ applied} \\
        &\text{to~$k$ terminates in at most $j$ steps,}
      \end{aligned}
    \\
    \star &
    \text{otherwise.}
  \end{cases}
\end{equation*}
The oracle may be truncated too, i.e., for a finite sequence $a \in \finseq{\two}$ we define $\prx[a]{n}{j}(k)$
just like above, with $a$ acting as the oracle. If the computation accesses the oracle beyond the length of~$a$, the result is~$\star$.

Define $H^\alpha_n : \NN \to \II$ by
\begin{equation*}
  H^\alpha_n(\pair{j, k}) \defeq
  \begin{cases}
    [\rat{m} - 2^{-k}, \rat{m} + 2^{-k}] &
      \text{if $\prx[\alpha]{n}{j}(k) = m$,} \\
    [0,1] & \text{if $\prx[\alpha]{n}{j}(k) = \star$}
  \end{cases}
\end{equation*}
and $I^\alpha_n : \NN \to \II$ by $I^\alpha_n(0) \defeq [0,1]$ and
\begin{equation*}
  I^\alpha_n(k+1) \defeq
  \begin{cases}
    I^\alpha_n(k) \cap H^\alpha_n(k) &
      \text{if $I^\alpha_n(k) \cap H^\alpha_n(k) \neq \emptyset$}
    \\
    I^\alpha_n(k) & \text{otherwise.}
  \end{cases}
\end{equation*}
For any given~$n$ and~$k$, the endpoints of $I^\alpha_n(k)$ depend only on a finite prefix of~$\alpha$. Thus the endpoints depend continuously in parameter~$\alpha$ with respect to the product topology on $[0,1]^\NN$ and the discrete topology on~$\QQ$.

We get a nested sequence of closed intervals
\begin{equation*}
  [0,1] = I^\alpha_n(0) \supseteq I^\alpha_n(1) \supseteq I^\alpha_n(2) \supseteq \cdots
\end{equation*}
whose intersection is a closed interval $\mathbf{I}^\alpha_n \defeq \bigcap_{k \in \NN} I^\alpha_n(k)$.
The endpoints of~$\mathbf{I}^\alpha_n$ are $\alpha$-computable as \emph{lower} and \emph{upper} reals, respectively. Indeed, we can $\alpha$-computably enumerate a non-decreasing sequence of rationals whose supremum is the left-end point, and a non-increasing sequence of rationals whose infimum is the right-end point of~$\mathbf{I}^\alpha_n$.
Moreover, $\mathbf{I}^\alpha_n = [x,x]$ when $\pr[\alpha]{n}$ represents $x \in [0,1]$.

The story now repeats at the level of sequences. Given $a : \NN \to [0,1]$ and $n \in \NN$, for each~$\alpha$ such that $\alpha \in \invim{\srep}(a)$ the map $\pr[\alpha]{n}$ computes an interval~$\mathbf{I}^\alpha_n$ that depends on~$\alpha$, but we seek one that depends on~$a$ only.
The convex hull of the~$\mathbf{I}^\alpha_n$'s is the smallest interval that does the job:
\begin{equation*}
  \mathbf{J}^a_n \defeq
  \textstyle
  \hull \left(
    \bigcup_{\alpha \in \invim{\srep}(a)} \mathbf{I}^\alpha_n
  \right).
\end{equation*}
This is a closed interval because the union appearing in it is closed, even compact, for it is the projection of the set
\begin{equation*}
  C \defeq \set{
    (\alpha, x) \in \Cantor \times [0,1] \such
    \alpha \in \invim{\srep}(a) \land x \in \mathbf{I}^\alpha_n
  },
\end{equation*}
which we claim to be compact.
It suffices to check that $C$ is closed. Its membership relation is
\begin{align*}
  (\alpha, x) \in C
  &\liff
  \alpha \in \invim{\srep}(a) \land x \in \mathbf{I}^\alpha_n \\
  &\liff
  \alpha \in \invim{\srep}(a) \land \all{k \in \NN} x \in I^\alpha_n(k).
\end{align*}
This is a closed relation because~$\srep$ is continuous, and the endpoints of $I^\alpha_n(k)$ vary continuously in $\alpha$, $n$ and $k$.
Moreover, if $n$ happens to be an $a$-index for~$x \in [0,1]$, then $\mathbf{J}^a_n = [x,x]$ because $\mathbf{I}^\alpha_n = [x,x]$ for all $\alpha \in \invim{\srep}(a)$.

The endpoints of $\mathbf{J}^a_n$ are more complicated than those of $\mathbf{I}^\alpha_n$. There is an $\alpha$-computable double sequence of rationals $q_{i,j}$ such that the left endpoint is equal to $\inf_i \sup_j q_{i,j}$, and dually for the right endpoint.

\subsubsection{Construction of a Miller sequence}
\label{sec:constr-mill-sequ}

We prove \cref{thm:miller-sequence} by using the following generalization of Kakutani's fixed-point theorem, which itself is a generalization of Brouwer's fixed-point theorem. Depending on one's point of view, it is ironic or fascinating that such very classical theorems\footnote{Brouwer's fixed-point theorem has no constructive proofs, because a result of Orevkov's~\cite{orevkov63} implies that in the effective topos there is a continuous map $[0,1]^2 \to [0,1]^2$ which moves every point by a positive distance.} are used to construct an intuitionistic topos.

\begin{theorem}
  \label{thm:generalized-Brouwer}%
  If $F \subseteq [0,1]^\NN \times [0,1]^\NN$ is a closed set such that for each $a \in [0,1]^\NN$, the set $F[a] \defeq \set{b \in [0,1]^\NN \such (a, b) \in F}$ is non-empty and convex, then there is $\mil \in [0,1]^\NN$ such that $(\mil,\mil) \in F$.
\end{theorem}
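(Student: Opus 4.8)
The plan is to deduce this infinite-dimensional Kakutani-type theorem from the finite-dimensional Kakutani fixed-point theorem by a compactness / inverse-limit argument, exploiting that $[0,1]^\NN$ is the inverse limit of the cubes $[0,1]^n$ along the coordinate projections $\pi^{(n)} : [0,1]^\NN \to [0,1]^n$. First I would, for each $n$, push the correspondence $F$ forward to a correspondence $F_n$ on $[0,1]^n$: given $u \in [0,1]^n$, set
\begin{equation*}
  F_n[u] \defeq \set{ \pi^{(n)}(b) \such (a,b) \in F \text{ for some } a \in [0,1]^\NN \text{ with } \pi^{(n)}(a) = u }.
\end{equation*}
Since $F$ is closed in a compact space and the projections are continuous, $F_n$ has closed graph; since each $F[a]$ is non-empty and convex and the linear image of a convex set is convex, each $F_n[u]$ is non-empty and convex. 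By Kakutani's theorem applied to the upper-hemicontinuous, non-empty-convex-valued correspondence $F_n$ on the compact convex set $[0,1]^n$, there is a fixed point $u_n \in [0,1]^n$ with $u_n \in F_n[u_n]$.

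Next I would assemble the $u_n$'s into a point of $[0,1]^\NN$. For each $n$, the relation $u_n \in F_n[u_n]$ means there is a pair $(a^{(n)}, b^{(n)}) \in F$ with $\pi^{(n)}(a^{(n)}) = u_n = \pi^{(n)}(b^{(n)})$; i.e.\ $a^{(n)}$ and $b^{(n)}$ agree on their first $n$ coordinates and the pair lies in $F$. Now use compactness of $[0,1]^\NN \times [0,1]^\NN$: the sequence $(a^{(n)}, b^{(n)})_n$ has a convergent subnet (or convergent subsequence, since the space is metrizable) with limit $(a^\star, b^\star) \in F$, using that $F$ is closed. The key point is that along this limit $a^\star = b^\star$: for any fixed coordinate $k$, all pairs with index $n \geq k$ satisfy $a^{(n)}(k) = b^{(n)}(k)$, so $a^\star(k) = b^\star(k)$; hence $\mil \defeq a^\star = b^\star$ satisfies $(\mil,\mil) \in F$.

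The main obstacle, and the step deserving the most care, is verifying that each $F_n$ genuinely has closed graph (equivalently, is upper hemicontinuous with closed values into the compact space $[0,1]^n$), since $F_n[u]$ is defined via an existential quantifier over the non-compact-looking fiber $\invim{(\pi^{(n)})}(u) \subseteq [0,1]^\NN$ — but that fiber is in fact compact, being a closed subset of $[0,1]^\NN$, so the projection of the closed set $F \cap (\invim{(\pi^{(n)})}(u) \times [0,1]^\NN)$ onto the $b$-coordinate is closed, and a routine diagonal/net argument upgrades this to closedness of the full graph of $F_n$. A secondary point is that convexity of $F_n[u]$ requires $F[a]$ to be convex for \emph{every} $a$ in the fiber simultaneously and then taking the union of their images; since each image $\pi^{(n)}(F[a])$ is convex but a union of convex sets need not be convex, I would instead define $F_n[u]$ as $\pi^{(n)}\big(\bigcup_{a \in \invim{(\pi^{(n)})}(u)} F[a]\big)$ and take its convex hull, or — cleaner — observe that it suffices to apply Kakutani to any correspondence with non-empty convex values and closed graph whose fixed points we can still assemble, so I would take $F_n[u] \defeq \hull\big(\pi^{(n)}(\{\,b : (a,b)\in F,\ \pi^{(n)}(a)=u\,\})\big)$ and check that a fixed point of this still yields, via compactness of $C$ as in the interval-domain discussion, an actual pair in $F$ in the limit. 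The remaining verifications (Kakutani's hypotheses, metrizability of $[0,1]^\NN$, continuity of projections) are standard.
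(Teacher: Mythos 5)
First, a point of comparison: the paper does not actually prove this theorem --- its ``proof'' is a citation to the literature (Miller's discussion pointing to Eilenberg--Montgomery 1946). The statement is an instance of the Kakutani--Fan--Glicksberg fixed-point theorem for the compact convex set $[0,1]^\NN$ in the locally convex space $\RR^\NN$, and your strategy --- finite-dimensional approximation via the coordinate projections, Kakutani in each $[0,1]^n$, then a compactness/diagonal limit --- is a legitimate route to it. Your verifications of non-emptiness, of the closed graph of $F_n$ (image of the compact set $F$ under a continuous map), and the final limit argument showing $a^\star = b^\star$ coordinatewise are all correct \emph{for the correspondence $F_n$ as first defined}.

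The genuine gap is in the hand-off between your convexity fix and your assembly step. You correctly observe that $F_n[u]$ is a union of the convex sets $\pi^{(n)}(F[a])$ over $a$ in the fiber, hence need not be convex, and you propose replacing it by its fiberwise convex hull. But then a Kakutani fixed point only gives $u_n \in \hull(F_n[u_n])$, i.e.\ $u_n = \sum_i \lambda_i\, \pi^{(n)}(b_i)$ for up to $n+1$ pairs $(a_i, b_i) \in F$ with $\pi^{(n)}(a_i) = u_n$ --- it does \emph{not} give a single pair $(a^{(n)}, b^{(n)}) \in F$ agreeing on the first $n$ coordinates. Your subsequence-of-pairs argument therefore does not apply, and since the number of terms in the Carath\'eodory representation grows with $n$, no fixed finite tuple of pairs can be extracted along a subsequence. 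The sentence ``check that a fixed point of this still yields \dots\ an actual pair in $F$ in the limit'' is precisely the step that needs a new idea. One workable repair: encode the convex combination as a probability measure $\nu_n = \sum_i \lambda_i \delta_{(a_i, b_i)}$ supported on $F$, extract (after a diagonal argument making $u_n(k)$ converge to some $\mil(k)$ for each $k$) a weak-$*$ limit $\nu$ supported on $F \cap (\set{\mil} \times [0,1]^\NN) = \set{\mil} \times F[\mil]$, and note that the barycenter of the second marginal is $\mil$ and lies in the closed convex set $F[\mil]$. Alternatively one can follow the standard Fan--Glicksberg proof via continuous approximate selections. Either way, the step must be supplied; as written the argument is incomplete exactly where the infinite-dimensionality bites.
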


The statement is not easy to credit properly; see the paragraph after \cite[Thm~6.1]{miller04:_cont_deg} for a discussion which proposes \cite{Eilenberg1946} as the earliest work implying the statement given here.

\Cref{thm:generalized-Brouwer} is a fixed-point theorem because a closed set $F \subseteq [0,1]^\NN \times [0,1]^\NN$ can be construed as the graph of an upper semicontinuous multivalued map taking each $a \in [0,1]^\NN$ to the non-empty set $F[a]$.
The sequence~$\mil$ is a fixed point in the sense that $\mil \in F[\mil]$.

In our case, we take $F$ to be essentially $\mathbf{J}$:
\begin{equation*}
  F \defeq \set{(a, b) \in [0,1]^\NN \times [0,1]^\NN \such \all{n \in \NN} b(n) \in \mathbf{J}^a_n},
\end{equation*}
or expressed as a multivalued map,
\begin{equation*}
  \textstyle
  F[a] \defeq \prod_{n \in \NN} \mathbf{J}^a_n.
\end{equation*}
%
%
Let us verify the conditions of the theorem.
Obviously, $F[a]$ is non-empty and convex for all $a \in [0,1]^\NN$.
To see that~$F$ is closed, we unravel its definition in logical form:
\begin{align*}
  (a, b) \in F
  &\liff \all{n \in \NN} b(n) \in \mathbf{J}^a_n \\
  &\liff\textstyle
    \all{n \in \NN} b(n) \in
    \hull \left(
      \bigcup \set{ \mathbf{I}^\alpha_n \such \srep(\alpha) = a }
    \right) \\
  &\liff\textstyle
    \begin{aligned}[t]
      &\all{n \in \NN}
      \some{u, v \in [0,1]}
      u \leq b(n) \leq v \land {} \\
      &\quad
      (\some{\alpha \in \invim{\srep}(a)} u \in \mathbf{I}^\alpha_n)
      \land
      (\some{\beta \in \invim{\srep}(a)} v \in \smash{\mathbf{I}^\beta_n})
    \end{aligned}
  \\
  &\liff\textstyle
    \begin{aligned}[t]
      &\all{n \in \NN}
      \some{u, v \in [0,1]}
      u \leq b(n) \leq v \land {} \\
      &\quad
      (\some{\alpha \in \invim{\srep}(a)} \all{m \in \NN} u \in I^\alpha_n(m))
      \land {} \\
      &\quad
      (\some{\beta \in \invim{\srep}(a)} \all{m \in \NN} v \in I^\beta_n(m))
    \end{aligned}
\end{align*}
This is a closed condition: $\forall$ and $\land$ correspond to intersection, $\exists \alpha \in \invim{\srep}(a)$ to projection along the compact set $\invim{\srep}(a) \defeq \set{ \alpha \in \Cantor \such \srep(\alpha) = a}$, the relation $\leq$ is closed, projecting the $n$-th component $b(n)$ is continuous, and the endpoints of $I^\alpha_n(m)$ depend continuously on its parameters, in particular~$\alpha$.

It remains to verify that a fixed point $\mil \in F[\mil]$ is a Miller sequence. If $n \in \NN$ is a $\mil$-index of $x \in [0,1]$ then $\mil \in F[\mil]$ implies $\mil(n) \in \mathbf{J}^a_n = [x, x]$, hence $\mil(n) = x$, as required.


\section{Parameterized partial combinatory algebras}
\label{sec:parameterized-part-comb}

We seek a topos in which a Miller sequence is an epimorphism from the natural numbers to the Dedekind reals. Some sort of realizability model seems appropriate, although it cannot be an ordinary realizability topos, as those validate the axiom of countable choice.
\Cref{def:sequence-computable} specifies that $n \in \NN$ realizes $x \in [0,1]$ when it does so \emph{parameterically} in oracles representing $a : \NN \to [0,1]$. Therefore, in the present section we develop a general notion of parameterized computational models.

Let us take a moment to introduce notation that is commonly used in realizability theory. We already wrote $f : A \parto B$ to indicate a \defemph{partial map}, which is a map $f : A' \to B$ defined on a subset $A' \subseteq A$. For $x \in A$ we write $\defined{f(x)}$ when $f(x)$ is defined, i.e., when $x \in A'$. More generally, we write $\defined{e}$ when the expression~$e$ is well-defined, and hence so are all of its subexpressions. If $e_1$ and $e_2$ are two possibly undefined expressions, then $e_1 \kleq e_2$ means that if one is defined then so is the other and they are equal. In contrast, $e_1 = e_2$ asserts that both sides are defined and equal.

In realizability theory logical statements are witnessed by \emph{realizers}, which may be numbers, $\lambda$-terms, sequences or other data. A realizer is meant to represent computational evidence of a statement.
For instance, a realizer for $\some{x} \phi(x)$ encodes a specific $a$ for which $\phi(a)$ holds as well as a realizer for $\phi(a)$, and a realizer for $\phi \to \psi$ encodes a procedure for converting realizers for $\phi$ into realizers for $\psi$. In~\cref{sec:heyt-prealg-struct} we shall make these ideas precise.

In Stephen Kleene's original realizability interpretation of Heyting arithme\-tic~\cite{KleeneSC:intint} the realizers were numbers, whereas in a typical modern framework they are elements of a structure first defined by Solomon Feferman~\cite{feferman75}:

\begin{definition}
  \label{def:pca}%
  A \defemph{partial combinatory algebra (pca)} is given by a \defemph{carrier set~$\AA$}, and a partial \defemph{application} operation ${\app} : \AA \times \AA \parto \AA$, such that there exist \defemph{basic combinators} $\combK, \combS \in \AA$ satisfying, for all $\R{a}, \R{b}, \R{c} \in \AA$,
  \begin{align*}
    &\defined{(\combK \app \R{a})}, &
    (\combK \app \R{a}) \app \R{b} &= \R{a}, \\
    &\defined{((\combS \app \R{a}) \app \R{b})}, &
    ((\combS \app \R{a}) \app \R{b}) \app \R{c} &\kleq (\R{a} \app \R{c}) \app (\R{b} \app \R{c}).
  \end{align*}
\end{definition}

\noindent
To make notation more economical, we write application $\R{a} \app \R{b}$ as juxtaposition $\R{a} \, \R{b}$ and associate it to the left, $\R{a} \, \R{b} \, \R{c} = (\R{a} \, \R{b}) \, \R{c}$.

A non-trivial pca has much richer structure as it may seem at first sight.
For instance, we may encode in it the natural numbers and partial computable functions, as we shall for parameterized pcas in \cref{sec:progr-with-ppcas}.

\begin{example}
  \label{ex:pca-K-1}%
  Recall from \cref{sec:oracle-comp-maps} that $\pr[\alpha]{m}$ stands for the partial $\alpha$-computable map coded by~$m$. The so-called Kleene's first algebra is the pca with the carrier set $\KK_1 \defeq \NN$ and application $m \cdot n \defeq \pr{m}(n)$, where $\pr{m}$ is the $m$-th partial computable map.
  For any oracle $\alpha \in \Cantor$ the relativized version $\KK[\alpha]_1$ has the same carrier set and application $m \cdot n \defeq \pr[\alpha]{m}(n)$.
\end{example}

We refer to~\cite{oosten08:_realiz} for further examples of pcas and press on to the definition of parameterized pcas.

\begin{definition}
  \label{def:ppca}%
  A \defemph{parameterized partial combinatory algebra (ppca)} is given by
  \begin{itemize}
  \item a \defemph{carrier set} $\AA$, whose elements are called \defemph{realizers},
  \item a non-empty \defemph{parameter set} $\PP$, whose elements are called \defemph{parameters},
  \item a partial \defemph{application} operation ${\app} : \PP \times \AA \times \AA \parto \AA$,
  \end{itemize}
  such that there exist \defemph{basic combinators} $\combK, \combS \in \AA$, satisfying, for all $p, q \in \PP$ and $\R{a}, \R{b}, \R{c} \in \AA$,
  \begin{align*}
    &\combK \app[p] \R{a} = \combK \app[q] \R{a},
    &
    \combK \app[p] \R{a} \app[p] \R{b} &= \R{a},
    \\
    &\combS \app[p] \R{a} = \combS \app[q] \R{a},
    &
    \combS \app[p] \R{a} \app[p] \R{b} \app[p] \R{c} &\kleq (\R{a} \app[p] \R{c}) \app[p] (\R{b} \app[p] \R{c}),
    \\
    &\combS \app[p] \R{a} \app[p] \R{b} = \combS \app[q] \R{a} \app[q] \R{b}.
  \end{align*}
\end{definition}

These conditions express uniformity of the basic combinators with respect to parameters, a notion that we shall explicate in \cref{def:uniform}.
The equations in the left column imply $\defined{(\combK \app[p] \R{a})}$ and $\defined{(\combS \app[p] \R{a} \app[p] \R{b})}$ for all $p \in \PP$ and $\R{a}, \R{b} \in \AA$.

For better readability we continue writing application as juxtaposition and let it associate to the left,
but we need a better way of displaying the parameters, which we do by writing $p \at e$ to specify
that all applications in expression~$e$ should use parameter~$p$. We assign~$\at$ a lower precedence than to application so that $p \at e_1 \app e_2 = p \at (e_1 \app e_2)$. For example, the above equations can be written as
\begin{align*}
  &p \at \combK \, \R{a} = q \at \combK \, \R{a},
  &p \at \combK \, \R{a} \, \R{b} &= \R{a},
  \\
  &p \at \combS \, \R{a} = q \at \combS \, \R{a},
  &p \at \combS \, \R{a} \, \R{b} \, \R{c} &\kleq p \at (\R{a} \, \R{c}) \, (\R{b} \, \R{c}),
  \\
  &p \at \combS \, \R{a} \, \R{b} = q \at \combS \, \R{a} \, \R{b}.
\end{align*}
We sometimes write parentheses around $p \at e$ to improve readability, especially in equations.
A formal account of the notation $p \at e$ is given in \cref{sec:comb-compl-ppcas}.

When no confusion may arise, we take the liberty of referring to a ppca $(\AA, \PP, {\cdot})$ just by the pair $(\AA, \PP)$.

\begin{example}
  An ordinary pca may be construed as a ppca whose parameter set is a singleton.
\end{example}

\begin{example}
  \label{ex:oracle-ppca}
  The following is our main motivating example.
  Recall from \cref{sec:oracle-comp-maps} that $\pr[\alpha]{m}$ stands for the partial $\alpha$-computable map coded by~$m$.
  Let the carrier of the ppca be $\KK \defeq \NN$,
  the parameter set any non-empty set of oracles $\PP \subseteq \Cantor$,
  and application $m \app[\alpha] n \defeq \pr[\alpha]{m}(n)$.

  The combinator~$\combK$ is the code of a machine which accepts input~$n$ and outputs the code of a machine that always outputs~$n$. Such a machine does not consult the oracle, and neither does the machine that always outputs~$n$, hence $\combK \app[\alpha] n = \combK \app[\beta] n$ for all $n \in \NN$.

  To obtain~$\combS$, we apply the relativized smn theorem~\cite[Sec.~III.1.5]{soare87:_recur_enumer_sets_degrees} to first get a computable map $f : \NN \times \NN \to \NN$ such that
  \begin{equation*}
    \pr[\alpha]{f(k, m)}(n) \simeq \pr[\alpha]{\pr[\alpha]{k}(n)}(\pr[\alpha]{m}(n)).
  \end{equation*}
  We apply the theorem again to get a computable $g : \NN \to \NN$ such that
  $\pr[\alpha]{g(k)}(m) = f(k, m)$, and let $\combS \in \NN$ be such that $\pr[\alpha]{\combS} = g$.
  For all $\alpha \in \PP$ and $k, m \in \NN$ we have
  \begin{equation*}
    \combS \app[\alpha] k =
    \pr[\alpha]{\combS}(k) =
    g(k)
  \end{equation*}
  and
  \begin{equation*}
    \combS \app[\alpha] k \app[\alpha] m =
    \pr[\alpha]{\pr[\alpha]{\combS}(k)}(m) =
    \pr[\alpha]{g(k)}(m) =
    f(k, m).
  \end{equation*}
  Being computable, $g$ and $f$ do not depend on the oracle, therefore
  $\combS \app[\alpha] k = \combS \app[\beta] k$
  and
  $\combS \app[\alpha] k \app[\alpha] m = \combS \app[\beta] k \app[\beta] m$ for all $\beta \in \PP$.
  Finally, the defining equation for~$f$ guarantees that
  $\combS \app[\alpha] k \app[\alpha] m \app[\alpha] n \simeq
   (k \app[\alpha] n) \app[\alpha] (m \app[\alpha] n)$.
\end{example}

\begin{example}
  \label{ex:general-oracle-ppca}%
  The previous example generalizes to Jaap van Oosten's construction~\cite[Thm~1.7.5]{oosten08:_realiz} which from any pca $(\AA, {\cdot})$ and a partial map $\xi : \AA \parto \AA$ constructs a new pca $(\AA^\xi, {\app[\xi]})$ with~$\xi$ acting as an oracle.
  The carrier set is unchanged $\AA^\xi \defeq \AA$, while application~$\app[\xi]$ represents a dialogue between a computation in~$\AA$ and the oracle~$\xi$.
  When the construction is applied to $\KK_1$ and $\alpha \in \Cantor$, we obtain a pca that is (isomorphic to)
  the relativized pca $\KK[\alpha]_1$ from \cref{ex:pca-K-1}.

  As it turns out, the construction is uniform in~$\xi$, in the sense that $\combK_\xi, \combS_\xi \in \AA^\xi$ do not depend on~$\xi$, and neither do $\combK_\xi \app[\xi] \R{a}$, $\combS_\xi \app[\xi] \R{a}$, and $\combS_\xi \app[\xi] \R{a} \app[\xi] \R{b}$.
  The conditions for having a ppca are thus met: the carrier set is $\AA$ and the parameter set is any non-empty
  subset $\PP \subseteq \AA \parto \AA$.
\end{example}


\subsection{Combinatory completeness of ppcas}
\label{sec:comb-compl-ppcas}

Partial combinatory algebras have the so-called property of \emph{combinatory completeness}.
We formulate an analogous notion for parameterized pcas.

The set of~\defemph{(applicative) expressions in variables $x_1, \ldots, x_n$} over a ppca $(\AA, \PP)$ is defined inductively:
any variable $x_i$ is an expression, any constant $\R{a} \in \AA$ is an expression, and a formal application $e_1 \cdot e_2$ is an expression if~$e_1$ and~$e_2$ are.
We continue to write application as juxtaposition and associate it to the left.
An expression~$e$ in no variables is \defemph{closed}. For any $p \in \PP$ and a closed expression~$e$,
define $p \at e$ recursively by
\begin{align*}
  p \at \R{a} &\defeq \R{a} &&\text{if $\R{a} \in \AA$,}
  \\
  (p \at e_1 \app e_2) &\defeq (p \at e_1) \app[p] (p \at e_2)
  &&\text{if $e_1$ and $e_2$ are closed expressions.}
\end{align*}
Note that $p \at e$ may be undefined.

For a variable $x$ and an applicative expression $e$, let the \defemph{abstraction} $\abstr{x} e$ be the applicative expression defined inductively as
\begin{align*}
  \abstr{x} y &\defeq \combK \, y & &\text{if $y$ is a variable distinct from $x$} \\
  \abstr{x} x &\defeq \combS \, \combK \, \combK \\
  \abstr{x} \R{a} &\defeq \combK \, a & &\text{for a constant $\R{a} \in \AA$} \\
  \abstr{x} e_1 e_2 &\defeq \combS \, (\abstr{x} e_1) \, (\abstr{x} e_2).
\end{align*}
We write $e[\R{a}_1/x_1, \ldots, \R{a}_n/x_n]$ for $e$ with $\R{a}_i$'s substituted for~$x_i$'s. We abbreviate the substitution $[\R{a}_1/x_1, \ldots, \R{a}_n/x_n]$ as $[\vec{\R{a}}/\vec{x}]$, which allows us to write $e[\vec{\R{a}}/\vec{x}]$. To be precise, substitution is defined as follows:
\begin{align*}
  x_i [\vec{\R{a}}/\vec{x}] &\defeq \R{a}_i \\
  y [\vec{\R{a}}/\vec{x}] &\defeq y &&\text{if $y \not\in \set{x_1, \ldots, x_n}$} \\
  \R{b} [\vec{\R{a}}/\vec{x}] &\defeq \R{b} &&\text{if $\R{b} \in \AA$} \\
  (e_1 \, e_2) [\vec{\R{a}}/\vec{x}] &\defeq (e_1[\vec{\R{a}}/\vec{x}]) \, (e_2[\vec{\R{a}}/\vec{x}]).
\end{align*}

\begin{lemma}
  \label{lem:abstr-subst-commute}%
  If $y \not\in \set{x_1, \ldots, x_n}$ then $(\abstr{y} e)[\vec{\R{a}}/\vec{x}] = \abstr{y} (e[\vec{\R{a}}/\vec{x}])$.
\end{lemma}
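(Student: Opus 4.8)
The plan is to prove the statement by straightforward structural induction on the expression $e$, mirroring the inductive definition of the abstraction operator $\abstr{y}(-)$ and the substitution $(-)[\vec{\R{a}}/\vec{x}]$. Throughout I fix the hypothesis $y \notin \set{x_1, \ldots, x_n}$, so in particular $y$ is never substituted for, and substitution commutes with the variable-vs-$y$ case analysis used in defining abstraction. Since both sides of the claimed equality are syntactic expressions (no application in a ppca is actually being carried out), this is a genuine equality of expressions, which makes the induction purely combinatorial.

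First I would handle the base cases. If $e$ is a variable $z$ with $z \neq y$: if moreover $z \notin \set{x_1,\ldots,x_n}$, then $z[\vec{\R{a}}/\vec{x}] = z$, and $\abstr{y} z = \combK\, z$ gives $(\combK\, z)[\vec{\R{a}}/\vec{x}] = \combK\, z = \abstr{y}(z[\vec{\R{a}}/\vec{x}])$ since $\combK$ is a constant; if instead $z = x_i$ for some $i$, then $\abstr{y} x_i = \combK\, x_i$, and $(\combK\, x_i)[\vec{\R{a}}/\vec{x}] = \combK\, \R{a}_i$, while $x_i[\vec{\R{a}}/\vec{x}] = \R{a}_i$ and $\abstr{y} \R{a}_i = \combK\, \R{a}_i$, so the two sides again agree. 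If $e$ is the variable $y$ itself: $\abstr{y} y = \combS\,\combK\,\combK$, which is closed, so it is fixed by the substitution, and on the other side $y[\vec{\R{a}}/\vec{x}] = y$ (because $y \notin \set{x_1,\ldots,x_n}$) and $\abstr{y} y = \combS\,\combK\,\combK$ again. Finally, if $e$ is a constant $\R{b} \in \AA$, then $\R{b}[\vec{\R{a}}/\vec{x}] = \R{b}$ and both sides reduce to $\combK\, \R{b}$.

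For the inductive step, let $e = e_1\, e_2$ and assume the statement for $e_1$ and $e_2$. By definition $\abstr{y}(e_1\, e_2) = \combS\,(\abstr{y} e_1)\,(\abstr{y} e_2)$, so applying the substitution and using that it distributes over formal application (and fixes the constant $\combS$) gives $\combS\,((\abstr{y} e_1)[\vec{\R{a}}/\vec{x}])\,((\abstr{y} e_2)[\vec{\R{a}}/\vec{x}])$. By the induction hypothesis this equals $\combS\,(\abstr{y}(e_1[\vec{\R{a}}/\vec{x}]))\,(\abstr{y}(e_2[\vec{\R{a}}/\vec{x}]))$, which by the definition of abstraction on an application is exactly $\abstr{y}((e_1[\vec{\R{a}}/\vec{x}])\,(e_2[\vec{\R{a}}/\vec{x}])) = \abstr{y}((e_1\, e_2)[\vec{\R{a}}/\vec{x}])$, as required.

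There is no real obstacle here; the only point needing care is the bookkeeping in the variable case, where one must keep separate the sub-case $z = x_i$ (substitution replaces $z$, so abstraction must be applied to the constant $\R{a}_i$ afterwards) from the sub-case $z \notin \set{x_1,\ldots,x_n}$ (substitution is the identity on $z$), and to verify that the hypothesis $y \notin \set{x_1,\ldots,x_n}$ is precisely what rules out the one dangerous configuration, namely $y$ being among the $x_i$. If one wished to be more economical, the whole argument could be phrased as: substitution is a homomorphism of the free applicative structure fixing all constants, and $\abstr{y}(-)$ is defined by a recursion that, under the hypothesis on $y$, commutes with any such homomorphism.
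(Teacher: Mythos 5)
Your proof is correct and is exactly the ``straightforward induction on the structure of~$e$'' that the paper invokes without giving details; the case analysis, including the use of the hypothesis $y \notin \set{x_1,\ldots,x_n}$ only in the case $e = y$, matches what the paper's definitions require.
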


\begin{proof}
  A straightforward induction on the structure of~$e$.
\end{proof}

\begin{lemma}
  \label{lem:abstr-p-defined}
  For any applicative expression~$e$ in variables $x_1, \ldots, x_n, y$, the value $p \at (\abstr{y} e)[\vec{\R{a}}/\vec{x}]$ is defined for all $p \in \PP$ and $\R{a}_1, \ldots, \R{a}_n \in \AA$.
\end{lemma}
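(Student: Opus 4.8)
The plan is to prove \cref{lem:abstr-p-defined} by structural induction on the expression $e$ in the variables $x_1, \ldots, x_n, y$, strengthening the statement appropriately so that the induction goes through. The key point is that, by the definition of abstraction, $\abstr{y} e$ is always an expression whose outermost application is headed by one of the basic combinators $\combK$ or $\combS$, and the defining equations of a ppca guarantee that partial applications of these combinators are defined regardless of the parameter $p$. First I would observe that it suffices to treat the case where $e$ itself contains no variables other than $x_1, \ldots, x_n, y$; after substituting $\R{a}_i$ for $x_i$ using \cref{lem:abstr-subst-commute}, we are left with $\abstr{y}(e')$ where $e'$ has $y$ as its only free variable, and the claim reduces to: for every expression $e'$ in the single variable $y$ over $(\AA, \PP)$, the value $p \at (\abstr{y} e')$ is defined for all $p \in \PP$.

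The induction then proceeds over the structure of $e'$. In the base cases, $\abstr{y} z = \combK\, z$ for a variable $z \neq y$ (but here $e'$ has only $y$ free, so this case does not arise after the reduction — or if we keep extra constants, $z$ would be a constant), $\abstr{y} y = \combS\,\combK\,\combK$, and $\abstr{y} \R{a} = \combK\, \R{a}$ for a constant $\R{a} \in \AA$. For $\abstr{y}\R{a} = \combK\,\R{a}$, evaluating $p \at \combK\,\R{a} = \combK \app[p] \R{a}$ is defined by the left-column equations of \cref{def:ppca} (as noted immediately after that definition). For $\abstr{y} y = \combS\,\combK\,\combK$, evaluating $p \at \combS\,\combK\,\combK = (\combS \app[p] \combK) \app[p] \combK$ is defined because $\defined{(\combS \app[p] \R{a} \app[p] \R{b})}$ holds for all $\R{a}, \R{b} \in \AA$, in particular for $\R{a} = \R{b} = \combK$. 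In the inductive step, $\abstr{y}(e_1 e_2) = \combS\,(\abstr{y} e_1)\,(\abstr{y} e_2)$; by the induction hypothesis $p \at (\abstr{y} e_1)$ and $p \at (\abstr{y} e_2)$ are both defined, yielding realizers $\R{b}_1, \R{b}_2 \in \AA$, and then $p \at \combS\,(\abstr{y} e_1)\,(\abstr{y} e_2) = \combS \app[p] \R{b}_1 \app[p] \R{b}_2$, which is again defined by the same clause $\defined{(\combS \app[p] \R{a} \app[p] \R{b})}$ of the ppca axioms.

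The only mild subtlety — and the step I expect to require the most care in the writeup — is handling the variables and the substitution cleanly, i.e., making precise the reduction to the single-variable case and invoking \cref{lem:abstr-subst-commute} at the right moment, so that the induction hypothesis is applied to strictly smaller expressions after substitution has been pushed inside the abstraction. Everything else is a direct appeal to the remark following \cref{def:ppca} that $\combK \app[p] \R{a}$ and $\combS \app[p] \R{a} \app[p] \R{b}$ are always defined. No genuine obstacle is anticipated; the result is essentially bookkeeping that sets up combinatory completeness in the next lemma.
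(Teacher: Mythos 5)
Your proposal is correct and takes essentially the same route as the paper: a structural induction whose base cases reduce to the definedness of $\combK \app[p] \R{a}$ and $\combS \app[p] \R{a} \app[p] \R{b}$ postulated in \cref{def:ppca}, and whose application case feeds the induction hypotheses into the $\combS$ clause. The only cosmetic difference is that you push the substitution inside the abstraction first via \cref{lem:abstr-subst-commute}, whereas the paper carries the substitution along through the induction; both amount to the same bookkeeping.
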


\begin{proof}
  We proceed by induction on the structure of~$e$:
  \begin{itemize}
  \item if $e = x_i$ then $p \at (\abstr{y} e)[\vec{\R{a}}/\vec{x}] = \combK \app[p] \R{a}_i$, which is defined,
  \item if $e = y$ then $p \at (\abstr{y} e)[\vec{\R{a}}/\vec{x}] = \combS \app[p] \combK \app[p] \combK$, which is defined,
  \item if $e = \R{a}$ for $\R{a} \in \AA$ then $p \at (\abstr{y} e)[\vec{\R{a}}/\vec{x}] = \combK \app[p] \R{a}$, which is defined,
  \item if $e = e_1 \app e_2$ then
    $
    p \at (\abstr{y} e)[\vec{\R{a}}/\vec{x}] =
    \combS \app[p] ((\abstr{y} e_1)[\vec{\R{a}}/\vec{x}]) \app[p] ((\abstr{y} e_2)[\vec{\R{a}}/\vec{x}])
    $,
    which is defined because by induction hypotheses both arguments of~$\combS$ are defined.
    \qedhere
  \end{itemize}
\end{proof}

\begin{lemma}
  \label{lem:abstr-compute}%
  For any applicative expression $e$ in variables $x_1, \ldots, x_n, y$, parameter $p \in \PP$, and $\R{a}_1, \ldots, \R{a}_n, \R{b} \in \AA$, 
  \begin{equation*}
    p \at ((\abstr{y} e) \, \R{b})[\vec{\R{a}}/\vec{x}] \kleq p \at e[\vec{\R{a}}/\vec{x}, \R{b}/y].
  \end{equation*}
\end{lemma}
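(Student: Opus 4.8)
The plan is to induct on the structure of the expression $e$, keeping the parameter $p$, the realizers $\R{a}_1, \ldots, \R{a}_n$ and $\R{b}$ fixed but arbitrary throughout, so that the induction hypothesis is available for each immediate subexpression. The four cases mirror the four defining clauses of the abstraction $\abstr{y}(-)$. In every case I would first unfold $\abstr{y}(-)$, then push the substitution $[\vec{\R{a}}/\vec{x}]$ inward — which leaves the constant $\R{b}$ and any constant $\R{a} \in \AA$ untouched, since neither is among $x_1, \ldots, x_n$, and which distributes over formal application — and finally invoke the $\combK$ and $\combS$ equations of \cref{def:ppca}.

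The three base cases are immediate. For $e = y$, $(\abstr{y} e)[\vec{\R{a}}/\vec{x}]$ is $\combS\,\combK\,\combK$, so the left side is $p \at \combS\,\combK\,\combK\,\R{b}$, which by the $\combS$ equation reduces to $p \at (\combK\,\R{b})\,(\combK\,\R{b}) = \R{b}$, matching the right side $p \at \R{b} = \R{b}$. For $e = x_i$ the $\combK$ equation collapses both sides to $\R{a}_i$; for $e = \R{a}$ a constant, both collapse to $\R{a}$. In each base case both sides are in fact defined, so the Kleene equality $\kleq$ is a genuine equality.

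The work is in the case $e = e_1\,e_2$. Write $\R{c}_i \defeq p \at (\abstr{y} e_i)[\vec{\R{a}}/\vec{x}]$, which \cref{lem:abstr-p-defined} guarantees is defined. Unfolding $\abstr{y}(e_1\,e_2) = \combS\,(\abstr{y} e_1)\,(\abstr{y} e_2)$ and pushing in the substitution, the left side evaluates to $\combS \app[p] \R{c}_1 \app[p] \R{c}_2 \app[p] \R{b}$, and the $\combS$ equation gives
\begin{equation*}
  \combS \app[p] \R{c}_1 \app[p] \R{c}_2 \app[p] \R{b}
  \kleq
  (\R{c}_1 \app[p] \R{b}) \app[p] (\R{c}_2 \app[p] \R{b}).
\end{equation*}
Since $\R{b}$ is fixed by $[\vec{\R{a}}/\vec{x}]$, each $\R{c}_i \app[p] \R{b}$ coincides with $p \at ((\abstr{y} e_i)\,\R{b})[\vec{\R{a}}/\vec{x}]$, so the induction hypothesis applied to $e_i$ yields $\R{c}_i \app[p] \R{b} \kleq p \at e_i[\vec{\R{a}}/\vec{x}, \R{b}/y]$. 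As $\kleq$ is preserved by a single application $\app[p]$ in each argument, this gives
\begin{equation*}
  (\R{c}_1 \app[p] \R{b}) \app[p] (\R{c}_2 \app[p] \R{b})
  \kleq
  (p \at e_1[\vec{\R{a}}/\vec{x}, \R{b}/y]) \app[p] (p \at e_2[\vec{\R{a}}/\vec{x}, \R{b}/y])
  = p \at (e_1\,e_2)[\vec{\R{a}}/\vec{x}, \R{b}/y],
\end{equation*}
and composing the two Kleene equalities closes the case.

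I do not expect a genuine obstacle: this is the routine ``computational half'' of combinatory completeness, and all the content sits in the $\combK$ and $\combS$ equations together with \cref{lem:abstr-p-defined}. The one spot demanding a little care is the bookkeeping around $\kleq$ in the application case — one must check that definedness propagates correctly through the $\combS$-reduct (using the left-column equations of \cref{def:ppca}), and that the substitution genuinely commutes with abstraction on the subexpressions, which follows either straight from the definition of substitution or via \cref{lem:abstr-subst-commute}.
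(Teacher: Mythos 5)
Your proof is correct and follows essentially the same route as the paper's: induction on the structure of $e$, with the three base cases collapsing via the $\combK$ and $\combS$ equations, and the application case handled by unfolding $\combS\,(\abstr{y}e_1)\,(\abstr{y}e_2)$, invoking \cref{lem:abstr-p-defined} to justify the $\combS$-reduction, and then applying the induction hypotheses to both arguments. The bookkeeping you flag (definedness through the $\combS$-reduct and commuting substitution with abstraction) is exactly where the paper also leans on \cref{lem:abstr-p-defined} and the definition of substitution.
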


\begin{proof}
  We proceed by induction on the structure of~$e$. If $e = x_i$ then
\begin{equation*}
  p \at ((\abstr{y} x_i) \, \R{b})[\vec{\R{a}}/\vec{x}] \kleq
  p \at \combK \, \R{a}_i \, \, \R{b} =
  p \at \R{a}_i =
  p \at x_i[\vec{\R{a}}/\vec{x}, \R{b}/y].
\end{equation*}
If $e = y$ then
\begin{equation*}
  p \at ((\abstr{y} y) \, \R{b})[\vec{\R{a}}/\vec{x}] \kleq
  p \at \combS \, \combK \, \combK \, \R{b} =
  p \at \R{b} =
  p \at y[\vec{\R{a}}/\vec{x}, \R{b}/y].
\end{equation*}
If $e = \R{a} \in \AA$ then
\begin{equation*}
  p \at ((\abstr{y} \R{a}) \, \R{b})[\vec{\R{a}}/\vec{x}] \kleq
  p \at \combK \, \R{a} \, \, \R{b} =
  p \at \R{a} =
  p \at \R{a}[\vec{\R{a}}/\vec{x}, \R{b}/y].
\end{equation*}
Finally, if $e = e_1 \, e_2$ then
\begin{align*}
  p \at ((\abstr{y} e_1 \, e_2) \, \R{b})[\vec{\R{a}}/\vec{x}]
  &\kleq
  p \at \combS \, ((\abstr{y} e_1)[\vec{\R{a}}/\vec{x}]) \, ((\abstr{y} e_2)[\vec{\R{a}}/\vec{x}]) \, \R{b} \\
  &\kleq
  p \at ((\abstr{y} e_1)[\vec{\R{a}}/\vec{x}] \, \R{b}) \, ((\abstr{y} e_2)[\vec{\R{a}}/\vec{x}] \, \R{b}) \\
  &\kleq
  p \at (e_1[\vec{\R{a}}/\vec{x},\R{b}/y]) \, (e_2[\vec{\R{a}}/\vec{x},\R{b}/y]) \\
  &\kleq
  p \at (e_1 \, e_2)[\vec{\R{a}}/\vec{x},\R{b}/y].
\end{align*}
The passage from the first to the second row is secured by \cref{lem:abstr-p-defined}, and from the third to the fourth by the induction hypotheses.
\end{proof}

Let us give a name to those applicative expressions that are independent of the parameter.

\begin{definition}
  \label{def:uniform}%
  A closed applicative expression~$e$ is \defemph{uniform} when $p \at e = q \at e$ for all $p, q \in \PP$.
  When this is the case, there is a unique $\ucode{e} \in \AA$ such that $p \at e = \ucode{e}$ for all $p \in \PP$. (Note that this uses the fact that $\PP$ is non-empty.)
\end{definition}

\Cref{def:ppca} postulates that $\combK \, \R{a}$ and $\combS \, \R{a} \, \R{b}$ are uniform for all $\R{a}, \R{b} \in \AA$. In subsequent calculations we shall frequently use the fact that $p \at \ucode{e} = p \at e$ when~$e$ is uniform.

\begin{lemma}
  \label{lem:abstr-uniform}%
  A closed abstraction $\abstr{x} e$ is uniform.
\end{lemma}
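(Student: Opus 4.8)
The plan is to prove the statement by induction on the structure of the expression~$e$, reading off at each stage the relevant clause in the definition of $\abstr{x}e$ and invoking the appropriate equation from \cref{def:ppca}. The one preliminary point to record is that closedness of the abstraction is inherited by subexpressions: if $\abstr{x}e$ is closed then the free variables of~$e$ lie in $\set{x}$, and when $e = e_1 \app e_2$ the free variables of each $e_i$ lie in $\set{x}$ as well, so that $\abstr{x}e_1$ and $\abstr{x}e_2$ are again closed abstractions. In particular the clause $\abstr{x}y \defeq \combK\,y$ for a variable $y \neq x$ is never reached, since it would yield an expression with a free variable.

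For the base cases, if $e = \R{a}$ is a constant then $p \at \abstr{x}e = \combK \app[p] \R{a}$, which is independent of~$p$ by the equation $\combK \app[p] \R{a} = \combK \app[q] \R{a}$; and if $e = x$ then $p \at \abstr{x}e = \combS \app[p] \combK \app[p] \combK$, which is independent of~$p$ by the equation $\combS \app[p] \R{a} \app[p] \R{b} = \combS \app[q] \R{a} \app[q] \R{b}$ taken at $\R{a} = \R{b} = \combK$. In either case $\abstr{x}e$ is uniform.

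For the inductive step, let $e = e_1 \app e_2$. By the remark above together with the induction hypothesis, $\abstr{x}e_1$ and $\abstr{x}e_2$ are uniform; put $\R{u}_i \defeq \ucode{\abstr{x}e_i}$, so that $p \at \abstr{x}e_i = \R{u}_i$ for every $p \in \PP$. Unfolding the definition of the abstraction and then the defining clauses of $p \at e$, one computes
\begin{equation*}
  p \at \abstr{x}(e_1 \app e_2) = p \at \combS\,(\abstr{x}e_1)\,(\abstr{x}e_2) = \combS \app[p] \R{u}_1 \app[p] \R{u}_2,
\end{equation*}
and the right-hand side does not depend on~$p$, again by $\combS \app[p] \R{a} \app[p] \R{b} = \combS \app[q] \R{a} \app[q] \R{b}$, now with $\R{a} = \R{u}_1$ and $\R{b} = \R{u}_2$. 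Hence $\abstr{x}(e_1 \app e_2)$ is uniform, which closes the induction. (All of the values appearing above are defined, by \cref{lem:abstr-p-defined}.)

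I do not expect any genuine obstacle: this is a direct structural induction, and the only things demanding care are the bookkeeping that closedness passes to subexpressions — so that the offending clause $\abstr{x}y$, $y \neq x$, never arises — and the routine matching of each case against the uniformity equations built into \cref{def:ppca}.
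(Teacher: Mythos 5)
Your proof is correct and follows essentially the same route as the paper: a structural induction on $e$ in which the base cases are the postulated uniformity of $\combK\,\R{a}$ and $\combS\,\R{a}\,\R{b}$, and the application case combines the induction hypotheses with the uniformity of $\combS\,\R{a}\,\R{b}$. Your explicit remark that closedness excludes the clause $\abstr{x}y$ for $y \neq x$ is a point the paper leaves implicit, but it does not change the argument.
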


\begin{proof}
  We proceed by induction on the structure of~$e$.
  If $e$ is the variable $x$ then $\abstr{x} e = \combS \, \combK \, \combK$, which is uniform.
  If $e$ is a constant $\R{a} \in \AA$ then $\abstr{x} e = \combK \, \R{a}$, which is uniform.
  If $e = e_1 \, e_2$ then $\abstr{x} e = \combS \, (\abstr{x} e_1) \, (\abstr{x} e_2)$, which is uniform by induction hypotheses.
\end{proof}

\begin{theorem}[Combinatory completeness]
  \label{thm:combinatory-completeness}%
  For any applicative expression~$e$ over a ppca $(\AA, \PP)$ in variables~$x_1, \ldots, x_n, x_{n+1}$, there is $e^{*} \in \AA$ such that, for all $p \in \PP$ and $\R{a}_1, \ldots, \R{a}_{n+1} \in \AA$, the applicative expression $e^{*} \, \R{a}_1 \cdots \R{a}_n$ is uniform and
  \begin{equation*}
    (p \at e^{*} \, \R{a}_1 \cdots \R{a}_{n+1})
    \kleq
    (p \at e[\R{a}_1/x_1, \ldots, \R{a}_{n+1}/x_{n+1}]).
  \end{equation*}
\end{theorem}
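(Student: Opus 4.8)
The plan is the classical currying argument for combinatory completeness, carried out so as to track uniformity at every stage. Set
$e^{*} \defeq \ucode{\abstr{x_1}\abstr{x_2}\cdots\abstr{x_{n+1}}e}$;
this is meaningful because $\abstr{x_1}\cdots\abstr{x_{n+1}}e$ is a closed expression, hence uniform by \cref{lem:abstr-uniform}, so the code $\ucode{\,\cdot\,}\in\AA$ exists. The whole statement then falls out by applying \cref{lem:abstr-compute} once per argument to strip off one abstraction, using \cref{lem:abstr-subst-commute} to commute the substitution of arguments already consumed past the abstractions not yet consumed.

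The core is an auxiliary claim proved by induction on $k$, for $0\le k\le n$: the expression $e^{*}\,\R{a}_1\cdots\R{a}_k$ is uniform and $\ucode{e^{*}\,\R{a}_1\cdots\R{a}_k}=\ucode{g_k[\R{a}_1/x_1,\dots,\R{a}_k/x_k]}$, where $g_k\defeq\abstr{x_{k+1}}\cdots\abstr{x_{n+1}}e$. The base case $k=0$ is just the definition of $e^{*}$. For the step (with $k+1\le n$, so that $g_{k+1}$ is still a genuine abstraction), unfold $p\at e^{*}\,\R{a}_1\cdots\R{a}_{k+1}$ as $(p\at e^{*}\,\R{a}_1\cdots\R{a}_k)\app[p]\R{a}_{k+1}$, replace $p\at e^{*}\,\R{a}_1\cdots\R{a}_k$ by $p\at g_k[\R{a}_1/x_1,\dots,\R{a}_k/x_k]$ using the induction hypothesis together with $p\at\ucode{h}=p\at h$ for uniform $h$, and then invoke \cref{lem:abstr-compute} with $\R{b}=\R{a}_{k+1}$ to get $p\at e^{*}\,\R{a}_1\cdots\R{a}_{k+1}\kleq p\at g_{k+1}[\R{a}_1/x_1,\dots,\R{a}_{k+1}/x_{k+1}]$. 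By \cref{lem:abstr-subst-commute} the right-hand side is a closed abstraction, hence uniform and defined for every $p$ (by \cref{lem:abstr-uniform}, or \cref{lem:abstr-p-defined}), so the $\kleq$ is an equality of defined values and $e^{*}\,\R{a}_1\cdots\R{a}_{k+1}$ is uniform with the stated code. Reading the claim at $k=n$ gives the first assertion of the theorem: $e^{*}\,\R{a}_1\cdots\R{a}_n$ is uniform.

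For the second assertion, invoke the claim at $k=n$ one last time: with $g_n=\abstr{x_{n+1}}e$ we have $p\at e^{*}\,\R{a}_1\cdots\R{a}_{n+1}=(p\at e^{*}\,\R{a}_1\cdots\R{a}_n)\app[p]\R{a}_{n+1}=(p\at g_n[\vec{\R{a}}/\vec{x}])\app[p]\R{a}_{n+1}=p\at(g_n\,\R{a}_{n+1})[\vec{\R{a}}/\vec{x}]$, and a final use of \cref{lem:abstr-compute} yields $p\at e^{*}\,\R{a}_1\cdots\R{a}_{n+1}\kleq p\at e[\R{a}_1/x_1,\dots,\R{a}_{n+1}/x_{n+1}]$. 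No definedness claim is needed here, which is exactly why the statement only asserts $\kleq$. The degenerate case $n=0$ is subsumed: then $e^{*}\,\R{a}_1\cdots\R{a}_n$ is the constant $e^{*}$, trivially uniform, and the displayed chain reduces to one application of \cref{lem:abstr-compute}.

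I expect the only delicate point to be the bookkeeping with substitutions and codes: one must keep straight that substituting $\R{a}_1,\dots,\R{a}_k$ for $x_1,\dots,x_k$ commutes with the surviving abstractions $\abstr{x_{k+1}},\dots,\abstr{x_{n+1}}$ (which needs only that the variables are pairwise distinct), and that each partial application $e^{*}\,\R{a}_1\cdots\R{a}_k$ remains uniform, so that the rewrite $p\at\ucode{h}=p\at h$ is licensed at every intermediate stage. Beyond that there is no genuine obstacle — the argument is the standard one for pcas, and the single new ingredient, propagation of uniformity through partial applications, is precisely what \cref{lem:abstr-subst-commute}, \cref{lem:abstr-p-defined}, \cref{lem:abstr-compute}, and \cref{lem:abstr-uniform} were established for.
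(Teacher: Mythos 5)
Your proposal is correct and follows essentially the same route as the paper: you define $e^{*}$ as the code of the iterated abstraction $\abstr{x_1}\cdots\abstr{x_{n+1}}e$ and strip arguments one at a time via \cref{lem:abstr-compute}, commuting substitutions past the remaining abstractions with \cref{lem:abstr-subst-commute} and propagating definedness and uniformity back through the chain with \cref{lem:abstr-p-defined} and \cref{lem:abstr-uniform}. The paper merely writes the induction as a single displayed chain of $\kleq$'s rather than as an explicit induction on the number of consumed arguments; the content is identical.
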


\begin{proof}
  The usual proof for ordinary partial combinatory algebras can be mimicked.
  Let $e_{n+1} \defeq \abstr{x_{n+1}} e$ and $e_k = \abstr{x_k} e_{k+1}$ for $k = 1, \ldots, n$.
  By \cref{lem:abstr-uniform}, $e_1$ is uniform, so $e^{*} \defeq \ucode{e_1}$ is well defined,
  and we claim it is the element we are looking for.
  Given $p \in \PP$ and $\R{a}_1, \ldots, \R{a}_{n+1} \in \AA$, \cref{lem:abstr-compute,lem:abstr-subst-commute} imply
  \begin{align*}
    p \at e_1 \, \R{a}_1 \cdots \R{a}_n
    &\kleq p \at (e_2[\R{a}_1/x_1]) \, \R{a}_2 \cdots \R{a}_n \\
    &\kleq p \at (e_3[\R{a}_1/x_1, \R{a}_2/x_2]) \, \R{a}_3 \cdots \R{a}_n \\
    &\qquad \vdots \\
    &\kleq p \at (\abstr{x_{n+1}} e)[\R{a}_1/x_1, \ldots, \R{a}_n/x_n] \\
    &\kleq p \at \abstr{x_{n+1}} (e [\R{a}_1/x_1, \ldots, \R{a}_n/x_n]).
  \end{align*}
  The last row is defined by \cref{lem:abstr-p-defined} and the applicative expression in it is uniform by \cref{lem:abstr-uniform},
  therefore so is the first one.
  Finally, \cref{lem:abstr-compute} implies
  \begin{equation*}
    p \at e_1 \, \R{a}_1 \cdots \R{a}_{n+1}
    \kleq 
    p \at e[\R{a}_1/x_1, \ldots, \R{a}_{n+1}/x_{n+1}]. \qedhere
  \end{equation*}
\end{proof}

\subsection{Programming with ppcas}
\label{sec:progr-with-ppcas}

Combinatory completeness can be used to write complex programs in any ppca, just like in ordinary partial combinatory algebras~\cite[Sec.~1.3.1]{oosten08:_realiz}. For example, $\comb{id} \defeq \ucode{\abstr{x} x} = \ucode{\comb{s} \, \comb{k} \, \comb{k}}$ realizes the identity map.
More interesting are pairing, projections, booleans and the conditional:
\begin{align*}
  \combPair &\defeq \ucode{\abstr{x y z}{z\, x\, y}},
  &
  \combIf &\defeq \ucode{\abstr{x} x},
  \\
  \combFst &\defeq \ucode{\abstr{z}{z \, (\abstr{x\,y} x)}},
  &
  \combTrue &\defeq \ucode{\abstr{x\,y} x},
  \\
  \combSnd &\defeq \ucode{\abstr{z}{z \, (\abstr{x\,y} y)}},
  &
  \combFalse &\defeq \ucode{\abstr{x\,y} y}.
\end{align*}
These are all uniform by \cref{lem:abstr-uniform}. They satisfy the expected equations parameter-wise, for all $p \in \PP$ and $\R{a}, \R{b} \in \AA$:
\begin{align*}
  (p \at \combFst \, (\combPair \, \R{a} \, \R{b})) &= \R{a}, &
  (p \at \combIf \, \combTrue \, \R{a} \, \R{b}) &= \R{a}, \\
  (p \at \combSnd \, (\combPair \, \R{a} \, \R{b})) &= \R{b}, &
  (p \at \combIf \, \combFalse \, \R{a} \, \R{b}) &= \R{b}.
\end{align*}

Natural numbers are encoded as \defemph{Curry numerals}:
\begin{align*}
  \numeral{0} &\defeq \ucode{\combS\, \combK\, \combK},
  &
  \numeral{n+1} &\defeq \ucode{\combPair \, \combFalse \, \numeral{n}}
\end{align*}
Successor, predecessor and zero-testing are defined as
\begin{align}
  \comb{succ} &\defeq \ucode{\abstr{x}{\combPair \, \combFalse \,x}}, \label{eq:comb-succ} \\ 
  \comb{iszero} &\defeq \ucode{\combFst}, \notag\\
  \comb{pred} &\defeq
  \ucode{\abstr{x}{\combIf\, (\comb{iszero}\, x)\, \numeral{0}\, (\combSnd\, x)}}. \notag
\end{align}
These are again uniform and satisfy the expected equations.

To get recursive definitions going, we define the fixed-point combinators~$\comb{Y}$ and~$\comb{Z}$:
\begin{align*}
  \R{W} &\defeq \ucode{\abstr{x \, y} y \, (x \, x \, y)},
  &
  \comb{Y} &\defeq \ucode{\R{W} \, \R{W}},
  \\
  \R{X} &\defeq \ucode{\abstr{x\, y\, z} y \, (x \, x \, y) \, z},
  &
  \comb{Z} &\defeq \ucode{\R{X} \, \R{X}}.
\end{align*}
Then for all $p \in \PP$ and $\R{f}, \R{a} \in \AA$, $\comb{Z} \, \R{f}$ is uniform,
\begin{equation*}
  p \at \comb{Y} \, \R{f} \kleq p \at \R{f} \, (\comb{Y} \, \R{f})
  \qquad\text{and}\qquad
  p \at \comb{Z} \, \R{f} \, \R{a} \kleq p \at \R{f} \, (\comb{Z} \, \R{f}) \, \R{a}.
\end{equation*}
%
%
%
%
For instance, by repeatedly using \cref{lem:abstr-compute} we compute
\begin{equation*}
  p \at \comb{Z} \, \R{f} \, \R{a} \kleq
  p \at \R{X} \, \R{X} \, \R{f} \, \R{a} \kleq
  p \at \R{f} \, (\R{X} \, \R{X} \, \R{f}) \, \R{a} \kleq
  p \at \R{f} \, (\comb{Z} \, \R{f}) \, \R{a}.
\end{equation*}
With $\comb{Y}$ in hand primitive recursion on natural numbers is realized as
\begin{equation*}
  \comb{primrec} \defeq
  \ucode{\abstr{x \, \R{f} \, m} ((\comb{Z} \, \R{R}) \, x \, \R{f} \, m \, \comb{id})}
\end{equation*}
where
\begin{equation*}
  \R{R} \defeq \ucode{
      \abstr{r \, x \, \R{f} \, m}
      \comb{if} \, (\comb{iszero} \, m) \,
          (\combK \, x) \,
          (\abstr{y} \R{f} \, (\comb{pred} \, m) \, (r \, x \, \R{f} \, (\comb{pred} \, m) \, \comb{id}))
  }.
\end{equation*}
It satisfies, for all $p \in \PP$, $\R{a}, \R{f} \in \AA$ and $n \in \NN$,
\begin{align*}
  (p \at \comb{primrec} \, \R{a} \, \R{f} \, \numeral{0}) &= \R{a},
  &
  (p \at \comb{primrec} \, \R{a} \, \R{f} \, \numeral{n + 1}) &\kleq
  \R{f} \, \numeral{n} \, (\comb{primrec} \, \R{a} \, \R{f} \, \numeral{n}).
\end{align*}

%
%
%

\begin{example}
  \label{ex:numers-vs-numerals}
  It will be useful to know that in the ppca $(\KK, \PP)$ from \cref{ex:oracle-ppca} numbers can be converted to numerals and vice versa. For this purpose we construct realizers $\combNum, \combCur \in \KK$ such that for all $\alpha \in \PP$ and $n \in \NN$
  \begin{equation*}
    \alpha \at \combNum \, \numeral{n} = \alpha \at n
    \quad\text{and}\quad
    \alpha \at \combCur \, n = \alpha \at \numeral{n}.
  \end{equation*}
  To convert numerals to numbers, observe that there is $s \in \NN$, independent of~$\alpha$, such that $\pr[\alpha]{s}(n) = n + 1$, and define
  $\combNum \defeq \ucode{\comb{primrec} \, 0 \, s}$.
  To implement the reverse translation, we apply
  the relativized Kleene's recursion theorem~\cite[Sec.~III.1.6]{soare87:_recur_enumer_sets_degrees}
  to find $r \in \NN$, independent of~$\alpha$, such that
  \begin{equation*}
    \pr[\alpha]{r}(n) =
    \begin{cases}
      \ucodeRaised{\numeral{0}} & \text{if $n = 0$,}\\
      \pr[\alpha]{\ucode{\comb{succ}}}(\pr[\alpha]{r}(n-1)) & \text{if $n > 0$.}
    \end{cases}
  \end{equation*}
  We may take $\combCur \defeq r$ because
  $\alpha \at r \, 0 = \pr[\alpha]{r}(0) = \ucode{\numeral{0}} = (\alpha \at \numeral{0})$
  and, assuming $\alpha \at r \, n = \alpha \at \numeral{n}$ for the sake of the induction step,
  \begin{align*}
    \alpha \at r \, (n+1)
    &= \pr[\alpha]{r}(n+1)
     = \pr[\alpha]{\ucode{\comb{succ}}}(\pr[\alpha]{r}(n)) \\
    &= \alpha \at \comb{succ} \, (r \, n)
     = \alpha \at \comb{succ} \, \numeral{n}
     = \alpha \at \numeral{n + 1}.
  \end{align*}
\end{example}


\section{Parameterized realizability}
\label{sec:unif-real}

We next devise a notion of realizability based on ppcas that captures the uniformity of oracle computations from \cref{sec:non-diag-sequ}.
We use the tripos-to-topos construction~\cite{hyland80:_tripos}, a general technique for defining toposes. We refer the readers to~\cite[Sec.~2.1]{oosten08:_realiz} for background material.
For the remainder of this section we fix a ppca $(\AA, \PP)$.

\subsection{The parameterized realizability tripos}
\label{sec:tripos-built-from}

In the first step of the construction we shall define a contravariant functor
\begin{equation*}
  \PredSymbol : \op{\Set} \to \Heyt,
\end{equation*}
from sets to Heyting prealgebras, satisfying further conditions to be given later.
%
Recall that a Heyting prealgebra $(H, {\leq})$ is a set $H$ with a reflexive transitive relation~$\leq$ with elements $\bot$, $\top$ and binary operations $\land$, $\lor$, $\limply$, satisfying the laws of intuitionistic propositional calculus.

For any set~$X$, define
\begin{equation*}
  \Pred[\AA,\PP]{X} \defeq (\pow{\AA}^X, {\leq_X}),
\end{equation*}
where the preorder~$\leq_X$ will be defined momentarily.
When no confusion can arise, we abbreviate $\Pred[\AA,\PP]{X}$ as $\Pred{X}$.

An element $\phi \in \Pred{X}$ is called a \defemph{(tripos) predicate} on~$X$.
We say that $\R{a} \in \phi(x)$ \defemph{realizes} $\phi(x)$.
For a closed applicative expression~$e$ over~$\AA$, we define $e \rz[p] \phi(x)$ by
\begin{equation*}
  e \rz[p] \phi(x)
  \defiff
  \defined{(p \at e)} \land (p \at e) \in \phi(x)
\end{equation*}
and read it as ``$e$ realizes $\phi(x)$ at $p$''.
The preoder on $\Pred{X}$ is defined as follows, where $\phi, \psi \in \Pred{X}$:
\begin{equation*}
  \phi \leq_X \psi
  \defiff
    \some{\R{a} \in \AA}
    \all{x \in X}
    \all{\R{b} \in \phi(x)}
    \all{p \in \PP}
    \R{a} \, \R{b} \rz[p] \psi(x).
\end{equation*}
We say that the existential witness $a$ in the above formula \defemph{realizes} $\phi \leq_X \psi$.
Reflexivity of~$\leq_X$ is realized by $\ucode{\abstr{x} x}$. For transitivity, one checks that if $\R{a}$ realizes $\phi \leq_X \psi$ and $\R{b}$ realizes $\psi \leq_X \chi$ then $\ucode{\abstr{x} \R{b} \, (\R{a} \, x)}$ realizes $\phi \leq_X \chi$.

In order for $\PredSymbol$ to be a bona fide functor, we let it take a map $r : Y \to X$ to the \defemph{reindexing} map $\invim{r} : \Pred{X} \to \Pred{Y}$, which acts by precomposition $\invim{r} \phi = \phi \circ r$. This is obviously contravariant and functorial, and we shall check that $\invim{r}$ is a homomorphism below.

In order to show that $\PredSymbol$ is a tripos, we must verify the following conditions:
\begin{enumerate}
\item For every set $X$ the poset $\Pred{X}$ is a Heyting prealgebra (\cref{sec:heyt-prealg-struct}).
\item Reindexing is a homomorphism of Heyting prealgebras (\cref{sec:monot-reind}).
\item Universal and existential quantifiers exist for $\Pred{X}$ (\cref{sec:quantifiers}).
\item There is a generic element (\cref{sec:generic-element}).
\end{enumerate}
The arguments closely parallel those for the tripos arising from an ordinary pca~\cite[Prop.~1.2.1]{oosten08:_realiz}, the only additional care required concerns the presence of parameters.
In particular, when the parameter set~$\PP$ is a singleton, the tripos defined here coincides with the realizability tripos, and the topos defined in \cref{sec:unif-real-topos} coincides with the realizability topos.

\subsection{The Heyting prealgebra structure}
\label{sec:heyt-prealg-struct}

The Heyting structure on $\Pred{X}$ is as follows:
{\allowdisplaybreaks
\begin{align*}
  \top(x) &\defeq \AA,\\
  \bot(x) &\defeq \emptyset,\\
  (\phi \land \psi)(x) &\defeq \set{\R{a} \in \AA \such
      \all{p \in \PP}
      \combFst \, \R{a} \rz[p] \phi(x) \land
      \combSnd \, \R{a} \rz[p] \psi(x)
  },
  \\
  (\phi \lor \psi)(x) &\defeq \{\R{a} \in \AA \such
    \all{p \in \PP}
    \begin{aligned}[t]
    &(p \at \combFst \, \R{a} = \ucode{\combTrue} \land \combSnd \, \R{a} \rz[p] \phi(x))
    \lor {} \\
    &(p \at \combFst \, \R{a} = \ucode{\combFalse} \land \combSnd \, \R{a} \rz[p] \psi(x)) \},
    \end{aligned}
  \\
  (\phi \limply \psi)(x) &\defeq \set{\R{a} \in \AA \such
    \all{p \in \PP} \all{\R{b} \in \phi(x)} \R{a} \, \R{b} \rz[p] \psi(x)}.
\end{align*}
}
The above is like the analogous Heyting structure for ordinary pcas, except that realizers must be uniform in~$p \in \PP$. Next, we verify that the given operations satisfy the laws of a Heyting prealgebra.

\subsubsection{Falsehood and truth}
\label{sec:falsehood-truth}

Both $\bot \leq_X \phi$ and $\phi \leq_X \top$ are realized by $\ucode{\combK \, \combK}$.

\subsubsection{Conjunction}
\label{sec:conjunction}

We need to verify that, for all $\phi, \psi, \chi \in \Pred{X}$,
\begin{equation*}
  (\chi \leq_X \phi) \land (\chi \leq_X \psi) \iff \chi \leq_X \phi \land \psi.
\end{equation*}
If $\R{a}$ and $\R{b}$ realize $\chi \leq_X \phi$ and $\chi \leq_X \psi$, respectively, then $\chi \leq_X \phi \land \psi$ is realized by $\R{c} \defeq \ucode{\abstr{u} \combPair \, (\R{a} \, u) \, (\R{b} \, u)}$. Indeed, for any $x \in X$, $p \in \PP$ and $\R{d} \in \chi(x)$, we have
\begin{equation*}
  p \at \combFst \, (\R{c} \, \R{d})
  \kleq
  p \at \combFst \, (\combPair \, (\R{a} \, \R{d}) \, (\R{b} \, \R{d}))
  \kleq
  p \at \R{a} \, \R{d}.
\end{equation*}
Because $\R{a} \, \R{d} \rz[p] \phi(x)$, it follows that $\combFst \, (\R{c} \, \R{d}) \rz[p] \phi(x)$.
The argument for the second component is analogous.

Conversely, if $\R{a}$ realizes $\chi \leq_X \phi \land \psi$ then $\R{b} \defeq \ucode{\abstr{u} \combFst \, (\R{a} \, u)}$ and $\R{c} \defeq \ucode{\abstr{v} \combSnd \, (\R{a} \, v)}$ realize $\chi \leq_X \phi$ and $\chi \leq_X \psi$, respectively. Indeed, for any $x \in X$, $p \in \PP$ and $\R{d} \in \chi(x)$, we have $\R{a} \, \R{d} \rz[p] (\phi \land \psi)(x)$ and $p \at \R{b} \, (\R{a} \, \R{d}) = p \at \combFst \, (\R{a} \, \R{d})$, hence $\R{b} \, (\R{a} \, \R{d}) \rz[p] \phi(x)$.
The argument for $\R{c}$ and $\chi \leq_X \psi$ is analogous.

\subsubsection{Disjunction}
\label{sec:disjunction}

Disjunction is characterized by
\begin{equation*}
  (\phi \leq_X \chi) \land (\psi \leq_X \chi) \iff \phi \lor \psi \leq_X \chi.
\end{equation*}
If $\R{a}$ and $\R{b}$ respectively realize $\phi \leq_X \chi$ and $\psi \leq_X \chi$, then $\phi \lor \psi \leq_X \chi$ is realized by
\begin{equation*}
  \R{c} \defeq
  \ucode{\abstr{u} \combIf \, (\combFst \, u) \, (\R{a} \, (\combSnd \, u)) \, (\R{b} \, (\combSnd \, u))}.
\end{equation*}
Consider any $x \in X$, $p \in \PP$ and $\R{d} \in (\phi \lor \psi)(x)$.
If $p \at \combFst \, \R{d} = \combTrue$ then
\begin{equation*}
  p \at \R{c} \, \R{d}
  \kleq
  p \at \combIf \, (\combFst \, \R{d}) \, (\R{a} \, (\combSnd \, \R{d})) \, (\R{b} \, (\combSnd \, \R{d}))
  \kleq
  p \at \R{a} \, (\combSnd \, \R{d}),
\end{equation*}
and since $\R{a} \, (\combSnd \, \R{d}) \rz[p] \chi(x)$ also $\R{c} \, \R{d} \rz[p] \chi(x)$.
If $p \at \combFst \, \R{d} = \combFalse$ then
\begin{equation*}
  p \at \R{c} \, \R{d}
  \kleq
  p \at \combIf \, (\combFst \, \R{d}) \, (\R{a} \, (\combSnd \, \R{d})) \, (\R{b} \, (\combSnd \, \R{d}))
  \kleq
  p \at \R{b} \, (\combSnd \, \R{d}),
\end{equation*}
and since $\R{b} \, (\combSnd \, \R{d}) \rz[p] \chi(x)$ also $\R{c} \, \R{d} \rz[p] \chi(x)$.

Conversely, if $\R{c}$ realizes $\phi \lor \psi \leq_X \chi$ then $\phi \leq_X \chi$ and $\psi \leq_X \chi$ are respectively realized by $\R{a} \defeq \ucode{\abstr{u} \R{c} \, (\combPair \, \combTrue \, u)}$ and $\R{b} \defeq \ucode{\abstr{v} \R{c} \, (\combPair \, \combFalse \, v)}$.
Indeed, for any $x \in X$, $p \in \PP$ and $\R{d} \in \phi(x)$ we have $\combPair \, \combTrue \, \R{d} \rz[p] (\phi \lor \psi)(x)$, hence $\R{c} \, (\combPair \, \combTrue \, \R{d}) \rz[p] \chi(x)$. Now $\R{a} \, \R{d} \rz[p] \chi(x)$ holds because
$p \at \R{a} \, \R{d} \kleq \allowbreak p \at \R{c} \, (\combPair \, \combTrue \, \R{d})$.
The argument for $\R{b}$ and $\psi \leq_X \chi$ is analogous.

\subsubsection{Implication}
\label{sec:implication}

Implication is more interesting, as it involves uniformity of realizers.
It is characterized by the adjunction
\begin{equation*}
   \phi \leq_X \psi \limply \chi\iff \phi \land \psi \leq_X \chi.
\end{equation*}
If $\R{a}$ realizes $\phi \leq_X \psi \limply \chi$ then $\R{b} \defeq \ucode{\abstr{x} \R{a} \, (\combFst \, x) \, (\combSnd \, x)}$ realizes $\phi \land \psi \leq_X \chi$. Indeed, for any $x \in X$, $p \in \PP$ and $\R{c} \in (\phi \land \psi)(x)$ we have
$
  p \at \R{b} \, \R{c}
  \kleq \allowbreak
  p \at \R{a} \, (\combFst \, \R{c}) \, (\combSnd \, \R{c})
$
and $\R{a} \, (\combFst \, \R{c}) \, (\combSnd \, \R{c}) \rz[p] \chi(x)$, hence $\R{b} \, \R{c} \rz[p] \chi(x)$.

If $\R{b}$ realizes $\phi \land \psi \leq_X \chi$, then $\R{a} \defeq \ucode{\abstr{u} \abstr{v} \R{b} \, (\combPair \, u \, v)}$ realizes $\phi \leq_X \psi \limply \chi$.
To see this, we must verify for any $x \in X$, $p \in \PP$ and $\R{c} \in \phi(x)$ that $\R{a} \, \R{c} \rz[p] (\psi \limply \chi)(x)$.
Consider any $q \in \PP$ and $\R{d} \in \psi(x)$.
By \cref{lem:abstr-uniform}
\begin{equation*}
  p \at \R{a} \, \R{c} =
  p \at \abstr{v} \R{b} \, (\combPair \, \R{c} \, v) =
  q \at \abstr{v} \R{b} \, (\combPair \, \R{c} \, v) =
  q \at \R{a} \, \R{c},
\end{equation*}
hence
$
  q \at (\R{a} \app[p] \R{c}) \, \R{d} \kleq
  q \at \R{a} \, \R{c} \, \R{d} \kleq
  q \at \R{b} \, (\combPair \, \R{c} \, \R{d})
$,
and because $\R{b} \, (\combPair \, \R{c} \, \R{d}) \rz[q] \chi(x)$, it follows that $(\R{a} \app[p] \R{c}) \, \R{d} \rz[q] \chi(x)$.

\subsubsection{Negation}
\label{sec:negation}

In intuitionistic logic negation $\neg \phi$ is defined as $\phi \limply \bot$. A short calculation reveals that
\begin{align*}
  (\neg \phi)(x) &= \set{\R{a} \in \AA \such \phi(x) = \emptyset} \\
  (\neg\neg \phi)(x) &= \set{\R{a} \in \AA \such \phi(x) \neq \emptyset}.
\end{align*}

\subsection{Reindexing preserves the Heyting structure}
\label{sec:monot-reind}

We should not forget to check that $\invim{r} : \Pred{X} \to \Pred{Y}$ induced by $r : Y \to X$ is a homomorphism of Heyting prealgebras. This is easy, one just checks directly that $\invim{r}$ commutes with the logical connectives by unfolding the definitions. For example,
\begin{equation*}
  \R{a} \in \invim{r}(\phi \limply \psi)(y)
  \iff
  \all{\R{b} \in \phi(r(y))}
  \all{p \in \PP}
  \R{a} \, \R{b} \rz[p] \psi(r(y))
\end{equation*}
and
\begin{equation*}
  \R{a} \in (\invim{r}\phi \limply \invim{r}\psi)(y)
  \iff
  \all{\R{b} \in \phi(r(y))}
  \all{p \in \PP}
  \R{a} \, \R{b} \rz[p] \psi(r(y)),
\end{equation*}
which are the same condition.

\subsection{The quantifiers}
\label{sec:quantifiers}

Let $r : Y \to X$ be a map. The universal and existential quantifiers along~$r$ are monotone maps
\begin{equation*}
  \exists_r, \forall_r : \Pred{Y} \to \Pred{X},
\end{equation*}
such that, for all $\phi \in \Pred{Y}$ and $\psi \in \Pred{X}$,
\begin{equation*}
  \exists_r \phi \leq_X \psi \iff \phi \leq_Y \invim{r} \psi
  \qquad\text{and}\qquad
  \psi \leq_X \forall_r \phi \iff \invim{r} \psi \leq_Y \psi.
\end{equation*}
(The usual quantifiers correspond to $r : X \times Y \to X$ being the first projection.)
We may take the following definition of the existential quantifier:
\begin{align*}
  (\exists_r \phi)(x) \defeq
   \set{\R{a} \in \AA \such \some{y \in Y} r(y) = x \land \R{a} \in \phi(y)}.
\end{align*}
If $\R{a}$ realizes $\exists_r \phi \leq_X \psi$ then it also realizes $\phi \leq_Y \invim{r} \psi$:
for any $y \in Y$, $p \in \PP$ and $\R{b} \in \phi(y)$ we have $\R{b} \in (\exists_r \phi)(r(y))$, therefore $\R{a} \, \R{b} \rz[p] \psi(r(y))$.
Conversely, if $\R{a}$ realizes $\phi \leq_Y \invim{r} \psi$ then it also realizes $\exists_r \phi \leq_X \psi$: for any $x \in X$, $p \in \PP$ and $\R{b} \in (\exists_r \phi)(x)$, we have $r(y) = x$ for some $y \in Y$ such that $\R{b} \in \phi(y)$, hence $\R{a} \, \R{b} \rz[p] \psi(r(y))$ and $\R{a} \, \R{b} \rz[p] \psi(x)$.

Next, the definition of the universal quantifier is
\begin{multline*}
  (\forall_r \phi)(x) \defeq
   \{\R{a} \in \AA \such
     \all{y \in Y} r(y) = x \lthen
     \all{\R{b} \in \AA} \all{q \in \PP}
     \R{a} \, \R{b} \rz[q] \phi(y)
   \}.
\end{multline*}
If~$\R{a}$ realizes $\psi \leq_X \forall_r \phi$ then $\R{b} \defeq \ucode{\abstr{x} \R{a} \, x \, \combK}$ realizes $\invim{r}\psi \leq_Y \phi$:
for any $y \in Y$, $p \in \PP$, and $\R{c} \in \psi(r(y))$, we have $\R{a} \, \R{c} \rz[p] (\forall_r \phi)(r(y))$, therefore $\R{a} \, \R{c} \, \combK \rz[p] \phi(y)$ and $p \at \R{b} \, \R{c} = p \at \R{a} \, \R{c} \, \combK$, giving the required $\R{b} \, \R{c} \rz[p] \phi(y)$.
Conversely, if $\R{b}$ realizes $\invim{r}\psi \leq_Y \phi$ then $\R{a} \defeq \ucode{\abstr{x} \abstr{z} \R{b} \, x}$ realizes $\psi \leq_X \forall_r \phi$: consider any $x \in X$, $p \in \PP$, $\R{c} \in \psi(x)$. To show $\R{a} \, \R{c} \rz[p] (\forall_r \phi)(x)$, first note that $\defined{(p \at \R{a} \, \R{c})}$ by \cref{thm:combinatory-completeness}. Suppose $y \in Y$ is such that $r(y) = x$, and consider any $\R{d} \in \AA$ and $q \in \PP$.
By \cref{lem:abstr-uniform}
\begin{equation*}
  p \at \R{a} \, \R{c} =
  p \at \abstr{z} \R{b} \, \R{c} =
  q \at \abstr{z} \R{b} \, \R{c} =
  q \at \R{a} \, \R{c}.
\end{equation*}
Therefore
$
  q \at (\R{a} \app[p] \R{c}) \, \R{d} \kleq
  q \at \R{a} \, \R{c} \, \R{d} \kleq
  q \at \R{b} \, \R{c}
$.
From $\R{c} \in \psi(r(y))$ it follows that $\R{b} \, \R{c} \rz[q] \phi(y)$, therefore $(\R{a} \app[p] \R{c}) \, \R{d} \rz[q] \phi(y)$.

The reader may have expected the following, simpler definition of the universal quantifier
\begin{equation}
  \label{eq:alternative-forall}%
  (\forall'_r \phi)(x) \defeq
   \{\R{a} \in \AA \such
     \all{y \in Y} r(y) = x \lthen
     \R{a} \in \phi(y)
   \}.
\end{equation}
It is easy to check that $\forall'_r \phi \leq_X \forall_r \phi$ is realized by~$\combK$.
The converse $\forall_r \phi \leq \forall'_r \phi$ is realized by $\R{c} \defeq \abstr{x} x \, \combK$, but only when~$r$ is surjective. To see this, consider any $x \in X$, $\R{a} \in (\forall_r \phi)(x)$ and $p \in \PP$.
First, $p \at \R{c} \, \R{a} \simeq p \at \R{a} \, \combK$  is defined because~$r$ is surjective.
Second, if $y \in Y$ and $r(y) = x$ then $\R{a} \, \combK \rz[p] \phi(y)$ and $p \at \R{c} \, \R{a} = p \at \R{a} \, \combK$, therefore $\R{a} \, \combK \in \phi(y)$.

It remains to verify the Beck-Chevalley condition~\cite[Def.~2.1.2]{oosten08:_realiz}, which states that, given a pullback in~$\Set$
\begin{equation*}
  \xymatrix{
    {Y} \pullbackcorner
    \ar[r]^{r} \ar[d]_{u} 
    &
    {X} \ar[d]^{v}
    \\
    {Z} \ar[r]_{q}
    &
    {W}
  }
\end{equation*}
$\forall_r \circ \invim{u}$ and $\invim{v} \circ \forall_q$ are isomorphic maps of preorders.
For $\phi \in \Pred{Z}$, $x \in X$
the condition $\R{a} \in ((\forall_r \circ \invim{u}) \phi)(x)$ unfolds to
\begin{equation}
  \label{eq:bc-1}%
  \all{y \in Y} r(y) = x \lthen 
    \all{\R{b} \in \AA} \all{p \in \PP}
       \R{a} \, \R{b} \rz[p] \phi(u(y))
\end{equation}
while $\R{a} \in ((\invim{v} \circ \forall_q) \phi)(x)$ unfolds to
\begin{equation}
  \label{eq:bc-2}%
  \all{z \in Z} q(z) = v(x) \lthen 
    \all{\R{b} \in \AA} \all{p \in \PP}
      \R{a} \, \R{b} \rz[p] \phi(z).
\end{equation}
Let us show that these are equivalent. Suppose $\R{a}$ satisfies \eqref{eq:bc-1} and $z \in Z$ is such that $q(z) = v(x)$. Because the square is a pullback there is a unique $y \in Y$ such that $r(y) = x$ and $u(y) = z$. By \eqref{eq:bc-1} we get $\all{\R{b} \in \AA} \all{p \in \PP} \R{a} \, \R{b} \rz[p] \phi(u(y))$, which is the
same as $\all{\R{b} \in \AA} \all{p \in \PP} \R{a} \, \R{b} \rz[p] \phi(z)$.
Conversely, if $\R{a}$ satisfies \eqref{eq:bc-2} and there is a $y \in Y$ such that $r(y) = x$, then we instantiate \eqref{eq:bc-2} with $z = u(y)$ to obtain the desired $\all{\R{b} \in \AA} \all{p \in \PP} \R{a} \, \R{b} \rz[p] \phi(u(y))$.

\subsection{The generic element}
\label{sec:generic-element}

Because $\Set$ is cartesian closed, the remaining requirement for a tripos is the existence of a generic element, see the remark following~\cite[Def.~2.1.2]{oosten08:_realiz}. Specifically, we seek a set $S$ and $\sigma \in \Pred{S}$ such that, for all $X$ and $\phi \in \Pred{X}$, there exists $r_\phi : X \to S$ for which $\phi$ and $\invim{r_\phi} \sigma$ are isomorphic.

Once again, we just reuse the generic element for a tripos based on a pca, namely $S \defeq \pow{\AA}$ and $\sigma \defeq \id[\pow{\AA}]$. This obviously works because $\invim{\phi} \id[\pow{\AA}] = \phi$ for any $\phi \in \Pred{X}$.

\subsection{Tripos logic}
\label{sec:tripos-logic}

A formula $\phi$ built from logical connectives, quantifiers, and tripos predicates
whose free variables $x_1, \ldots, x_n$ range over the sets $X_1, \ldots, X_n$,
determines a tripos predicate
\begin{equation*}
  [x_1 \of X_1, \ldots, x_n \of X_n \such \phi] : X_1 \times \cdots \times X_n \to \pow{\AA},
\end{equation*}
which we sometimes abbreviate as $[x_1, \ldots, x_n \such \phi]$ or just $[\phi]$.
The case $n = 0$ yields an element of $\pow{\AA}$.

More precisely, the logical connectives appearing in~$\phi$ are interpreted as the corresponding Heyting operations from \cref{sec:heyt-prealg-struct}.
A universally quantified formula $\all{y \of Y} \psi$, where $\psi$ is a formula in variables $x_1, \ldots, x_n$ and $y$, is interpreted as quantification along the projection
\begin{equation*}
  r : X_1 \times \cdots \times X_n \times Y \to X_1 \times \cdots \times X_n,
\end{equation*}
as in \cref{sec:quantifiers}, and similarly for $\some{y \of Y} \psi$.

\begin{example}
  \label{example:tripos-forall-exists}
  Given a tripos predicate $\psi \in \Pred{X \times Y}$ with a non-empty set~$X$, the formula
  \begin{equation*}
    \all{x \of X} \some{y \of Y} \psi(x,y)
  \end{equation*}
  has no free variables, and so determines an element of $\pow{\AA}$ which, since~$X$ is non-empty,
  may be computed using~\eqref{eq:alternative-forall}:
  \begin{align*}
    \R{a} \in [\all{x \of X} \some{y \of Y} \psi(x,y)]
    &\iff
      \all{u \in X}
      \R{a} \in [\some{y} \psi(u, y)]
    \\
    &\iff
      \all{u \in X}
      \some{v \in Y}
      \R{a} \in \psi(u, v).
  \end{align*}
  Note that $\R{a}$ may not depend on~$u$ and~$v$.
  This is a rather strong uniformity condition, stemming from the fact that realizers receive no information about the elements of underlying sets. When we pass from the tripos to the topos, the situation will be rectified by equipping sets with suitable realizability relations, see \cref{example:topos-forall-exists}.
\end{example}

We say that a formula $\phi$ in variables $x_1 \of X_1, \ldots, x_n \of X_n$ is \defemph{valid}, written as
\begin{equation*}
  x_1 \of X_1, \ldots, x_n \of X_n \validates \phi,
\end{equation*}
when its interpretation is (equivalent to) the top predicate in $\Pred{X_1 \times \cdots \times X_n}$. This happens precisely when there is $\R{a} \in \AA$ such that $\R{a} \rz[p] [\phi](u_1, \ldots, u_n)$ for all $u_1 \in X_1, \ldots, u_n \in X_n$ and $p \in \PP$.

\subsection{The parameterized realizability topos on a ppca}
\label{sec:unif-real-topos}

Having defined a tripos, we employ the tripos-to-topos construction~\cite[Sec.~2.2]{oosten08:_realiz} to construct a topos from it.

\begin{definition}
  The \defemph{parameterized realizability topos} $\PRT{\AA, \PP}$ on the ppca $(\AA, \PP, {\cdot})$ is the topos arising from the tripos-to-topos construction applied to the tripos~$\PredSymbol[\AA, \PP]$.
\end{definition}

We recall how the construction works.
An object $X = (\carrier{X}, {\eq[X]})$ of the topos is a set~$\carrier{X}$ with a tripos predicate ${\eq[X]} \in \Pred{\carrier{X} \times \carrier{X}}$, called the \defemph{equality predicate}, which is a partial equivalence relation in the sense of tripos logic:
\begin{align*}
  x \of \carrier{X}, y \of \carrier{X} &\validates x \eq[X] y \limply y \eq[X] x,
  \\
  x \of \carrier{X}, y \of \carrier{X}, z \of \carrier{X} &\validates x \eq[X] y \limply y \eq[X] z \limply x \eq[X] z.
\end{align*}
Specifically, this means that there are $\R{a}, \R{b} \in \AA$ such that:
\begin{enumerate}
\item for all $x, y \in \carrier{X}$, $\R{c} \in (x \eq[X] y)$, and $p \in \PP$, we have $\R{a} \, \R{c} \rz[p] y \eq[X] x$,
\item for all $x, y, z \in \carrier{X}$, $\R{c} \in (x \eq[X] y)$, $\R{d} \in (y \eq[X] z)$, and $p \in \PP$, we have $\R{b} \, \R{c} \, \R{d} \rz[p] x \eq[X] z$.
\end{enumerate}
Henceforth we shall refrain from such explicit unfolding of formulas into statements about realizers, and instead rely on the fact that a formula is valid in the tripos logic if it has an intuitionistic proof~\cite[Thm.~2.1.6]{oosten08:_realiz}.

The equality predicate $\eq[X]$ endows $\carrier{X}$ with a notion of equality that is witnessed by realizers.
However, because we did not require reflexivity of~$\eq[X]$, there may be elements which are not equal to themselves.
To manage the anomaly, we define the \defemph{existence predicate} $\Ex{X} \in \Pred{\carrier{X}}$ by
\begin{equation*}
  \Ex{X}(x) \defeq (x \eq[X] x).
\end{equation*}
A realizer $\R{a} \in \Ex{X}(x)$ can be thought of as witnessing the fact that $x \in X$. When $\Ex{X}(x) = \emptyset$, the element $x$ ``does not exist'' from the point of view of the topos.
We shall strategically use $\Ex{X}(x)$ to disregard such non-existent elements.\footnote{%
It turns out that~$X$ is isomorphic to $(X', {\eq[X]})$ where $X' \defeq \set{x \in \carrier{X} \such (x \eq[X] x) \neq \emptyset}$, but insisting that $(x \eq[X] x) \neq \emptyset$ does not lead to any improvements.%
}

A morphism $F : X \to Y$ is represented by a predicate $F \in \Pred{\carrier{X} \times \carrier{Y}}$ which is a functional, i.e., one satisfying the following conditions, with $x, x' \of \carrier{X}$ and $y, y' \of \carrier{Y}$:
\begin{align*}
  x, y &\validates F(x,y) \limply \Ex{X}(x) \land \Ex{Y}(y)
     & &\text{(strict)} \\
  x, x', y, y' &\validates F(x,y) \land x \eq[X] x' \land y \eq[Y] y' \limply F(x', y')
     & &\text{(relational)} \\
  x, y, y' &\validates F(x, y) \land F(x, y') \limply y \eq[Y] y'
     & &\text{(single-valued)} \\
  x &\validates \Ex{X}(x) \limply \some{y \of Y} F(x, y)
     & &\text{(total)}
\end{align*}
Single-valuedness and totality are familiar conditions, while the other two ensure that~$F$
is well-behaved with respect to existence and equality predicates. Note how the antecedent $\Ex{X}(x)$ in the totality condition allows~$F$ to ignore non-existing elements of~$X$.
Two such relations represent the same morphism if they are equivalent as tripos predicates.

To actually compute~$F$, we use a realizer~$\R{t}$ for its totality and a realizer~$\R{u}$ for its strictness to define the realizer $\R{c} \defeq \ucode{\abstr{a} \combSnd (\R{u} \, (\R{t} \, a))}$, which works as follows: for any $x \in X$ and $\R{a} \in \Ex{X}(x)$ there is $y \in Y$ such that $\R{c} \, \R{a} \rz[p] \Ex{Y}(y)$ for all~$p \in \PP$,
and because $F$ is single-valued, $y$ is unique up to $\eq[Y]$.

The identity morphism on~$X$ is represented by $\eq[X]$,
and the composition of $F : X \to Y$ and $G : Y \to Z$ by the tripos predicate
\begin{equation*}
  (G \circ F)(x, z) \defeq \some{y \of Y} F(x, y) \land G(y, z).
\end{equation*}
The relevant conditions may be checked by reasoning in intuitionistic logic.

The terminal object in the topos is $\one \defeq (\set{\star}, {\eq[\one]})$, where $(\star \eq[\one] \star) \defeq \AA$.
The subobject classifier is the object $\Omega \defeq (\pow{\AA}, \eq[\Omega])$ whose equality predicate is logical equivalence,
$
  (\phi \eq[\Omega] \psi) \defeq
  (\phi \to \psi) \land (\psi \to \phi)
$.
Truth $T : \one \to \Omega$ is represented by the tripos predicate $T(\star, \phi) \defeq \phi$.

\subsection{Topos logic}
\label{sec:internal-logic-topos}

The topos logic differs from the tripos logic because it accounts for the equality and existence predicates. We refer to~\cite[Sec.~2.3]{oosten08:_realiz} for details, and give here an overview that will suffice for our purposes.

In the topos logic, the predicates on an object $X$ are its subobjects, which turn out to be in
bijective correspondence with equivalence classes of \defemph{strict predicates}~\cite[Thm.~2.2.1]{oosten08:_realiz}, i.e., those $\phi \in \Pred{\carrier{X}}$ that satisfy
\begin{align*}
  x \of X &\validates \phi(x) \limply \Ex{X}(x) & &\text{(strict)} \\
  x \of X, y \of X &\validates \phi(x) \land x \eq[X] y \limply \phi(y) & &\text{(relational)}
\end{align*}
The tripos falsehood is strict, and the tripos conjunction, disjunction, and the existential quantifier preserve strictness, hence these are computed in the topos in the same way as in the tripos. Truth, implication, and the universal quantifier require modification. We distinguish notationally between the tripos and topos logic by writing
``$\limply$'', ``$\forall y \of Y$'', and ``$\exists y \of Y$'' in the former, and
``$\lthen$'',  ``$\forall y \in Y$'', and ``$\exists y \in Y$'' in the latter.

First, the topos truth $\top$ qua predicate on~$X$ is the tripos predicate $\Ex{X}$. Indeed, this is a strict predicate, and for any strict predicate $\phi \in \Pred{X}$ the implication $\phi(x) \to \Ex{X}(x)$ is valid by strictness of~$\phi$. Because the top predicate has changed, we must also adjust validity: a strict predicate $\phi \in \Pred{X}$ is topos-valid when the tripos validates
\begin{equation*}
  x \of X \validates \Ex{X}(x) \to \phi(x).
\end{equation*}
Explicitly, there exists $\R{a} \in \AA$ such that for all $x \in \carrier{X}$, $\R{b} \in \Ex{X}(x)$ and $p \in \PP$ we have $\R{a} \, \R{b} \rz[p] \phi(x)$.

Second, the topos implication $\phi \lthen \psi$ of strict predicates  $\phi$ and $\psi$ on~$X$ is represented by the strict predicate
\begin{equation*}
  [x \of X \mid \Ex{X}(x) \land (\phi(x) \limply \psi(x))].
\end{equation*}
Explicitly, $\R{a} \in (\phi \lthen \psi)(x)$ when for all $p \in \PP$
\begin{equation*}
  (\combFst \, \R{a} \rz[p] \Ex{X}(x))
  \land
  (\combSnd \, \R{a} \rz[p] \phi(x) \to \psi(x)).
\end{equation*}

Third, if $\phi$ is a strict predicate on $X \times Y$, the topos universal $\all{y \in Y} \phi(x, y)$ is represented by the strict predicate
\begin{equation*}
  [x \of X \mid \Ex{X}(x) \land \all{y \of \carrier{Y}} (\Ex{Y}(y) \limply \phi(x,y))].
\end{equation*}
When $\carrier{Y}$ is non-empty we may use \eqref{eq:alternative-forall} to compute that, for any given $x \in \carrier{X}$, we have $\R{a} \in (\all{y \in Y} \phi(x, y))$ when for all $p \in \PP$
\begin{equation*}
  (\combFst \, \R{a} \rz[p] \Ex{X}(x))
  \land
  \all{y \in \carrier{Y}} \all{\R{b} \in \Ex{Y}(y)} \combSnd \, \R{a} \, \R{b} \rz[p] \phi(x, y).
\end{equation*}
The first conjunct just makes sure that non-existent~$x$ do not get in the way. The second one is more interesting, as it adjusts the unreasonable uniformity of tripos~$\forall$ by providing $\combSnd \, \R{a}$ with a realizer of~$y \in \carrier{Y}$.

One might expect the topos existential $\some{y \in Y} \phi(x, y)$ to be
\begin{equation*}
  [x \of X \such \some{y \of Y} \Ex{Y}(y) \land \phi(x, y)],
\end{equation*}
but we can reuse $\some{y \of Y} \phi(x, y)$, for if $\R{a} \in (\some{y \of Y} \phi(x, y))$
and~$s$ realizes strictness of~$\phi$ then $s \, \R{a} \in \Ex{Y}(y)$ for some $y \in \carrier{Y}$.

In contrast to the tripos logic, the topos logic is equipped with equality.
Unsurprisingly, equality on~$X$ is represented by $\eq[X]$, one just needs to check that this is indeed a strict predicate.
More generally, equality of morphisms $F, G : X \to Y$ is represented by the predicate
\begin{equation*}
  [x \of X \such \some{y \of Y} F(x, y) \land G(x, y)].
\end{equation*}

\begin{example}
  \label{example:topos-forall-exists}%
  Suppose $\carrier{X}$ is non-empty, and $\phi \in \Pred{\carrier{X} \times \carrier{Y}}$.
  A short calculations shows that $\all{x \in X} \some{y \in Y} \phi(x, y)$ is realized
  when there is $\R{a} \in \AA$ such that
  \begin{equation*}
    \all{x \in \carrier{X}}
    \all{\R{b} \in \Ex{X}(x)}
    \all{p \in \PP}
    \some{y \in \carrier{Y}}
    \R{a} \, \R{b} \rz[p] \phi(x, y).
  \end{equation*}
  Note that the unreasonable uniformity of \cref{example:tripos-forall-exists} has been rectified,
  as~$\R{b}$ is passed to~$\R{a}$.
\end{example}

\subsection{Parameterized assemblies}
\label{sec:unif-assemblies}

Direct manipulation of topos objects, and especially morphisms, can be cumbersome. Fortunately, the
subcategory of assemblies~\cite[Sec.~2.4]{oosten08:_realiz} is significantly easier to work with and already contains most objects of interest.

The idea is to take existence predicates as primary.
Define a \defemph{(parameterized) assembly} $X = (\carrier{X}, \Ex{X})$ to be a set $\carrier{X}$ with a tripos predicate $\Ex{X} \in \Pred{\carrier{X}}$, called the \defemph{existence predicate}, such that $\Ex{X}(x) \neq \emptyset$ for all $x \in \carrier{X}$.
Also define a \defemph{(parameterized) assembly map} $f : X \to Y$ to be a map $f : \carrier{X} \to \carrier{Y}$ which is realized by some $\R{a} \in \AA$, meaning: for all $x \in \carrier{X}$, $\R{b} \in \Ex{X}(x)$ and $p \in \PP$, we have $\R{a} \, \R{b} \rz[p] \Ex{Y}(f(x))$.
Assembly maps are closed under composition and include the identity maps, so we have a category $\PAsm{\AA, \PP}$. 

Given an assembly~$X$, let $\eq[X]$ be the tripos predicate on $\carrier{X} \times \carrier{X}$, defined by
\begin{equation*}
  (x \eq[X] x') \defeq \set{\R{a} \in \Ex{X}(x) \such x = x'}.
\end{equation*}
Thus $x \eq[X] x'$ is empty when $x \neq x'$ and equals $\Ex{X}(x)$ when $x = x'$.
It is evident that $x \eq[X] x'$ is an equality predicate on~$\carrier{X}$, hence the assembly~$X$ may be construed as the topos object $(\carrier{X}, \eq[X])$.
Not every topos object arises this way, for instance the subobject classifier~$\Omega$ does not.

To get a functorial embedding of assemblies into the topos, we map an assembly map $f : X \to Y$ to the topos morphism $F : X \to Y$ where
\begin{equation*}
  F(x, y) \defeq \set{\R{b} \in \AA \such
    f(x) = y
    \land
    \all{p \in \PP}
    \combFst \, \R{b} \rz[p] \Ex{X}(x)
    \land
    \combSnd \, \R{b} \rz[p] \Ex{Y}(y)}.
\end{equation*}
This is a functional relation, for if~$\R{a}$ realizes~$f$ then $\ucode{\abstr{x} \combPair \, x \, (\R{a} \, x)}$ realizes totality of~$F$.
The passage from assemblies to topos objects constitutes a full and faithful embedding $\PAsm{\AA, \PP} \to \PRT{\AA, \PP}$. Only fullness deserves attention. Suppose $X$ and $Y$ are assemblies and $F : X \to Y$ a morphism between the induced topos objects. Because $Y$ is an assembly and $F$ is single-valued, each $x \in \carrier{X}$ has at most one $y \in \carrier{Y}$ such that $F(x, y) \neq \emptyset$---but it also has at least one because~$F$ is total. Therefore, we may define a map $f : \carrier{X} \to \carrier{Y}$ by
\begin{equation*}
  f(x) = y \defiff F(x,y) \neq \emptyset.
\end{equation*}
If $\R{a} \in \AA$ realizes totality of $F$ then $\ucode{\abstr{x} \combSnd \, (\R{a} \, x)}$ realizes~$f$ as an assembly map.

\subsection{Some distinguished assemblies}
\label{sec:distinguished-assemblies}

We review certain objects of the topos that will play a role in the construction of the object of the Dedekind reals.

\subsubsection{Natural numbers, integers, and rational numbers}
\label{sec:natur-numb-integ}

The natural numbers object is the assembly $\objN \defeq (\NN, \Ex{\objN})$ where $\Ex{\objN}(n) \defeq \set{\numeral{n}}$, so that each number is realized by the corresponding Curry numeral.
The induction principle is realized by the primitive recursor $\comb{primrec}$ from \cref{sec:progr-with-ppcas}.

The objects of integers and rational numbers are the assemblies
\begin{equation*}
  \objZ \defeq (\ZZ, \Ex{\objZ})
  \quad\text{and}\quad
  \objQ \defeq (\QQ, \Ex{\objQ}),
\end{equation*}
whose existence predicates are induced by computable enumerations.
For the integers we can use
\begin{equation*}
  \Ex{\objZ}(k) \defeq
  \begin{cases}
    \set{\numeral{2 k}}     & \text{if $k \geq 0$,} \\
    \set{\numeral{1 - 2 k}} & \text{if $k < 0$.}
  \end{cases}
\end{equation*}
For the rationals we may reuse the bijection $\rat{} : \NN \to \QQ$ from \cref{sec:oracle-comp-maps},
\begin{equation*}
  \Ex{\objQ}(\rat{n}) \defeq \set{\numeral{n}}.
\end{equation*}
We also define the assembly $\two \defeq (\{0,1\},\Ex{\two})$, where $\Ex{\two}(0) = \{\numeral{0}\}$ and $\Ex{\two}(1) = \{\numeral{1}\}$.

Any other reasonable codings would result in isomorphic objects.
Arithmetical operations are realized and the order relation is decidable, i.e., the statement
${\all{x, y} x < y \lor y \leq x}$ is realized, both for $x, y \in \ZZ$ and for $x, y \in \QQ$.

\subsubsection{Products and exponentials}
\label{sec:prod-expon}

The category of parameterized assemblies is cartesian closed. The product of $X$ and $Y$ is the assembly
\begin{equation*}
  X \times Y \defeq (\carrier{X} \times \carrier{Y}, \Ex{X \times Y})
\end{equation*}
where
\begin{equation*}
  \Ex{X \times Y}(x, y) \defeq
  \set{\R{a} \in \AA \such
  \all{p \in \PP} \combFst \, \R{a} \rz[p] \Ex{X}(x) \land \combSnd \, \R{a} \rz[p] \Ex{Y}(y)}.
\end{equation*}
\begin{sloppypar}
  To construct the exponential~$Y^X$, we define its existence predicate, for any ${f : \carrier{X} \to \carrier{Y}}$, by
\end{sloppypar}
\begin{equation*}
  \Ex{Y^X}(f) \defeq
  \set{\R{a} \in \AA \such
    \all{x \in X} \all{\R{b} \in \Ex{X}(x)} \all{p \in \PP}
      \R{a} \, \R{b} \rz[p] \Ex{Y}(f(x))
  },
\end{equation*}
and set $\carrier{Y^X} \defeq \set{f : \carrier{X} \to \carrier{Y} \such \Ex{Y^X}(f) \neq \emptyset}$.

\begin{proposition}
  \label{prop:markov-principle}%
  Markov's principle
  \begin{equation*}
    \all{f \in \two^\objN} \neg \neg (\some{n \in \objN} f n = 1) \lthen \some{n \in \objN} f n = 1
  \end{equation*}
  is valid.
\end{proposition}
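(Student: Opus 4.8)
The plan is to exhibit one fixed, uniform realizer for the whole implication, organized around an unbounded search operator that the fixed-point combinator of \cref{sec:progr-with-ppcas} makes available in every ppca. Two facts about realizers drive the argument. First, if $\R{e} \in \Ex{\two^\objN}(f)$ then $p \at \R{e} \, \numeral{n} \in \Ex{\two}(f(n))$ for all $p \in \PP$ and $n \in \NN$, so $\R{e}$ lets us evaluate $f$. Second, the two elements of $\two$ are separated by a uniform Boolean test that does not see the parameter: there is a uniform $\comb{test} \in \AA$, built from $\comb{if}$ and the evident case analysis on $\two$, such that $p \at \comb{test} \, (\R{e} \, \numeral{n}) \, u \, v$ reduces to $p \at u$ exactly when $f(n) = 1$ and to $p \at v$ exactly when $f(n) = 0$, for every $p$. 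This second fact — that the branch taken depends only on $f$ and not on $p$ — is the one genuinely new ingredient relative to ordinary realizability; everything else is the classical Markov argument transcribed into this topos.

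First I would unfold the statement in the topos logic over the assembly $\two^\objN$ (using \cref{sec:internal-logic-topos}): a realizer must take a realizer $\R{e}$ of $f \in \two^\objN$ and a realizer $\R{d}$ of $\neg\neg(\some{n \in \objN} f(n) = 1)$, and return a realizer of $\some{n \in \objN} f(n) = 1$; since the tripos existential computes the topos one, such a realizer is essentially a pair $\combPair \, \numeral{n} \, \R{c}$ with $f(n) = 1$ and $\R{c} \in \Ex{\two}(1)$. By the description of double negation in \cref{sec:negation}, pushed through the topos implication, the realizer $\R{d}$ carries no usable content, but the \emph{mere existence} of $\R{d}$ forces $(\some{n \in \objN} f(n) = 1)(f) \neq \emptyset$, which in the metatheory means $f(n_0) = 1$ for some $n_0 \in \NN$. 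The realizer is therefore free to discard $\R{d}$ and search.

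Next I would use a fixed-point combinator, exactly in the style of $\comb{primrec}$ in \cref{sec:progr-with-ppcas} (with dummy abstractions guarding the branches so the non-selected branch is not forced), to obtain a uniform $\comb{search} \in \AA$ with
\begin{equation*}
  p \at \comb{search} \, \R{e} \, \numeral{n}
  \kleq
  p \at \comb{test} \, (\R{e} \, \numeral{n}) \, (\combPair \, \numeral{n} \, (\R{e} \, \numeral{n})) \, (\comb{search} \, \R{e} \, (\comb{succ} \, \numeral{n}))
\end{equation*}
for all $p \in \PP$, $\R{e} \in \AA$, $n \in \NN$. The claimed realizer of Markov's principle is then $\ucode{\abstr{\R{e}} \combPair \, \R{e} \, (\abstr{\R{d}} \comb{search} \, \R{e} \, \numeral{0})}$: its first component supplies the realizer of $\Ex{\two^\objN}(f)$ that the topos implication demands, and its second discards $\R{d}$ and runs the search from $0$.

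Finally I would verify correctness. Fix $f$, $\R{e} \in \Ex{\two^\objN}(f)$, and let $n_0$ be least with $f(n_0) = 1$ (available by the previous step). By downward induction on $n_0 - n$, and for all $p \in \PP$ simultaneously, one shows that $p \at \comb{search} \, \R{e} \, \numeral{n}$ is defined and equal to $p \at \combPair \, \numeral{n_0} \, (\R{e} \, \numeral{n_0})$ for every $n \leq n_0$: for $n < n_0$ the first fact gives $\R{e} \, \numeral{n} \rz[p] \Ex{\two}(0)$, so $\comb{test}$ selects the recursive branch and $p \at \comb{search} \, \R{e} \, \numeral{n} \kleq p \at \comb{search} \, \R{e} \, \numeral{n+1}$, while at $n = n_0$ it selects the first branch. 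In particular $p \at \comb{search} \, \R{e} \, \numeral{0} = p \at \combPair \, \numeral{n_0} \, (\R{e} \, \numeral{n_0})$, which realizes $\some{n \in \objN} f(n) = 1$ at $p$, since its first projection lies in $\Ex{\objN}(n_0)$ and its second in $\Ex{\two}(1) = (f(n_0) \eq[\two] 1)$. The subtle point, and the one I expect to be the main obstacle, is precisely this simultaneity over $p$: one must confirm that the search follows the \emph{same} finite sequence of recursive calls for every parameter — which is exactly what the parameter-blindness of $\comb{test}$ from the setup guarantees — so that a single element of $\AA$ realizes the implication uniformly.
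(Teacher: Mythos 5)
Your proposal is correct and takes essentially the same route as the paper: the paper's proof exhibits exactly this unbounded-search realizer built from the fixed-point combinator $\comb{Z}$, discarding the $\neg\neg$-realizer and relying on its mere existence (meta-level Markov) to guarantee termination. Your additional observation that the branch taken by the test depends only on $f(n)$ and not on the parameter $p$ — so the search traverses the same finite trace for every $p$ and a single element of $\AA$ suffices — is the right justification for the uniformity that the paper leaves implicit.
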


\begin{proof}
  The principle is realized by a program that searches for the least $n$ such that $f n \neq 0$:
  \begin{equation*}
    \abstr{f} \abstr{r}
    \comb{Z} \, (\abstr{s} \abstr{n}
         \combIf \,
         (\comb{iszero} \,
         (f \, n) \,
         (s \, (\comb{succ} \, n)) \,
         n))
     \, \numeral{0}.
  \end{equation*}
  The assumption $\neg\neg{\some{n \in \objN} f(n) = 1}$ ensures that the search will succeed.\footnote{As is typical of realizability models, we are relying on meta-level Markov's principle to realize Markov's principle: since it is impossible that the search will run forever, it will find what it is looking for.}
\end{proof}

\begin{example}
  Let us contrast $\forall\exists$ statements and exponentials. Consider a non-empty assembly~$X$, an assembly~$Y$, and
  a strict predicate $\phi \in \Pred{\carrier{X} \times \carrier{Y}}$, with $s \in \AA$ witnessing its strictness.
  Validity of $\all{x \in X} \some{y \in Y} \phi(x,y)$ is equivalent to there being $\R{a} \in \AA$ such that
  \begin{equation*}
    \all{x \in \carrier{X}} \all{\R{b} \in \Ex{X}(x)} \all{p \in \PP} \some{y \in \carrier{Y}} \R{a} \, \R{b} \rz[p] \phi(x,y).
  \end{equation*}
  The realizer $\R{c} \defeq \ucode{\abstr{\R{b}} \combSnd \, (s \, (\R{a} \, \R{b}))}$ satisfies, for any $x \in \carrier{X}$, $\R{b} \in \Ex{X}(x)$ and $p \in \PP$, that there is $y \in \carrier{Y}$ such that $\R{c} \, \R{b} \rz[p] \Ex{Y}(y)$. However, $\R{c}$ need not realize a choice map $f : X \to Y$ because~$y$ may depend on~$\R{b}$ and~$p$.
  Thus in general the axiom of choice
  \begin{equation*}
    (\all{x \in X} \some{y \in Y} \phi(x, y))
    \lthen
    \some{f \in Y^X} \all{x \in X} \phi(x, f(x)))
  \end{equation*}
  is not realized, even in case that $\Ex{X}(x)$ is a singleton for all~$x \in \carrier{X}$, because there is still dependence on the parameter. In particular, countable choice may fail, as it does in the topos~$\TT{\mil}$ from \cref{sec:topos-with-countable}.
\end{example}

\subsubsection{Sub-assemblies}
\label{sec:sub-assemblies}

Suppose $\phi \in \Pred{\carrier{X}}$ is a strict predicate on an assembly~$X$. (Notice that $\phi$ is automatically relational because $(x \eq[X] y) \neq \emptyset$ implies $x = y$.) We define the sub-assembly $\set{x \of X \such \phi(x)}$ to have the underlying set
\begin{equation*}
  \carrier{\set{x \of X \such \phi(x)}} \defeq \set{x \in \carrier{X} \such \phi(x) \neq \emptyset}
\end{equation*}
and the existence predicate $\Ex{\set{x \of X \such \phi(x)}}(x) \defeq \phi(x)$.
Then the canonical map $\set{x \of X \such \phi(x)} \to X$ is realized by any realizer for strictness of~$\phi$.
It is the monomorphism classified by the predicate~$\phi$.

\subsubsection{Constant assemblies}
\label{sec:constant-assemblies}

Define the \defemph{constant (parameterized) assembly} on a set $S$ to be $\nabla S \defeq (S, \Ex{\nabla S})$ with $\Ex{\nabla S}(x) \defeq \AA$.
The existence predicate is maximally uninformative, because all elements of~$S$ share all realizers.
Consequently, given any assembly $X$, every map $f : \carrier{X} \to S$ is realized, say by~$\combK$.
In particular, every map $f : S \to T$ between sets is an assembly map $\nabla f \defeq f : \nabla S \to \nabla T$,
which makes $\nabla$ a functor from sets to assemblies, see~\cite[Sec.~2.4]{oosten08:_realiz} for details.

\subsubsection{The $\neg\neg$-stable predicates and the assembly $\nabla\two$}
\label{sec:assembly-nabl-negn}

A predicate $\phi \in \Pred{X}$ on an object~$X$ is said to be \defemph{$\neg\neg$-stable} when $x \of X \validates \neg\neg\phi(x) \lthen \phi(x)$.
The assembly $\nabla \two$ classifies such predicates.
On the one hand, $(\nabla\two)^X$ is isomorphic to $\nabla(\two^{\carrier{X}})$ because every map $\carrier{X} \to \two$ is realized as a morphism $X \to \nabla\two$.
On the other, $\two^{\carrier{X}}$ qua Heyting algebra is equivalent to the Heyting prealgebra of $\neg\neg$-stable strict predicates on~$X$. To see this, observe that a strict predicate~$\phi$ on~$X$ is $\neg\neg$-stable when
\begin{equation*}
  x \of X \validates \Ex{X}(x) \to ((\phi(x) \lthen \bot) \lthen \bot) \lthen \phi(x),
\end{equation*}
%
%
%
%
%
which amounts to there being $\R{a} \in \AA$ such that, for all $x \in \carrier{X}$, $\R{b} \in \Ex{X}(x)$ and $p \in \PP$,
if $\phi(x) \neq \emptyset$ then $\R{a} \, \R{b} \rz[p] \phi(x)$.
Therefore, $\phi$ is equivalent to the strict predicate
$x \mapsto \set{a \in \AA \such \phi(x) \neq \emptyset}$,
which in turn corresponds to a unique map $\carrier{X} \to \two$, obtained when~$\emptyset$ and~$\AA$ are mapped to $0$ and~$1$, respectively.


\section{The real numbers}
\label{sec:real-numbers-object}

We review the construction of the Dedekind real numbers, and formulate it in a way that makes it easy to calculate the object of Dedekind reals in a parameterized realizability topos.
We also show that the Cauchy reals are sequence-avoiding in any parameterized realizability topos.

\subsection{The Dedekind real numbers}
\label{sec:dedek-real-numb}

In this section we work in higher-order intuitionistic logic~\cite{jim86:_introd_higher_order_categ_logic}, which can be interpreted in any topos.
Common mathematical constructions are available, as well as the standard number sets: the natural numbers $\objN$, the integers~$\objZ$ and the rationals~$\objQ$.

\begin{definition}
  \label{def:dedekind-reals}%
  A \defemph{Dedekind cut} is a pair $(L, U) \in \pow{\objQ} \times \pow{\objQ}$ of subsets of rationals, satisfying the following conditions, where $q$ and~$r$ range over~$\objQ$:
  \begin{enumerate}
  \item $L$ and $U$ are inhabited: $\some{q} q \in L$ and $\some{r} r \in U$,
  \item $L$ is lower-rounded and $U$ upper-rounded:
    \begin{equation*}
      q \in L \liff \some{r} q < r \land r \in L 
      \qquad\text{and}\qquad
      r \in U \liff \some{q} q \in U \land q < r,
    \end{equation*}
  \item $L$ is below $U$: $q \in L \land r \in U \lthen q < r$,
  \item $L$ and $U$ are located: $q < r \lthen q \in L \lor r \in U$.
  \end{enumerate}
  We write $\Cut{L,U}$ for the conjunction of the above conditions.
  The set of \defemph{Dedekind reals} is
  \begin{equation*}
    \RRd \defeq \set{ (L, U) \in \pow{\objQ} \times \pow{\objQ} \such \Cut{L, U} }.
  \end{equation*}
\end{definition}

The symbols $\RRd$ and $\RR$ both denote the set of Dedekind reals. We normally use $\RRd$ when referring to the object of Dedekind reals in a topos, or when we want to contrast the Dedekind reals with other kinds of reals.

The bi-implication defining lower-roundedness can be split into two separate conditions:
\begin{itemize}
\item $L$ is lower: $q < r \land r \in L \lthen q \in L$,
\item $L$ is rounded: $q \in L \lthen \some{r} q < r \land r \in L$.
\end{itemize}
Upper-roundedness may be decomposed analogously.

Many textbooks construct the reals by using one-sided cuts. Again, this works classically but requires special care when done constructively, and in any case the symmetry of double-sided cuts streamlines the development, even classically. 

\begin{propositionC}
  \label{prop:RRd-stable-equality}%
  For all $x, y \in \RRd$, if $\lnot\lnot (x = y)$ then $x = y$.
\end{propositionC}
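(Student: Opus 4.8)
The plan is to prove that Dedekind reals have $\neg\neg$-stable equality by reducing equality of cuts to equality of their lower and upper parts, and then to $\neg\neg$-stability of membership statements for rationals, which in turn follows from the fact that rational order is decidable (hence $\neg\neg$-stable) together with roundedness and locatedness.

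First I would observe that for $x = (L,U)$ and $y = (L',U')$ in $\RRd$, the equality $x = y$ is equivalent to $L = L' \land U = U'$, and by the usual intuitionistic manipulation this is equivalent to $\all{q \of \objQ} (q \in L \liff q \in L') \land \all{r \of \objQ}(r \in U \liff r \in U')$. Since a conjunction (even an infinite one indexed by $\objQ$) of $\neg\neg$-stable propositions is $\neg\neg$-stable, and since $\neg\neg$ commutes with universal quantification in the sense that $\neg\neg\all{q}P(q)$ implies $\all{q}\neg\neg P(q)$, it suffices to show that each biconditional $q \in L \liff q \in L'$ (and dually for $U$) is $\neg\neg$-stable. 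This reduces the problem to showing that $q \in L$ is $\neg\neg$-stable for a fixed rational $q$ and a fixed cut $(L,U)$.

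The key step is then: for any Dedekind cut $(L,U)$ and any $q \of \objQ$, we have $\neg\neg(q \in L) \lthen q \in L$. To see this, use roundedness of $L$: it suffices to produce some $r > q$ with $r \in L$, so pick any rational $r$ with $q < r' < r$ — more precisely, by decidability of rational order, locatedness applied to $q < r$ gives $q \in L \lor r \in U$. If $r \in U$ then by the "$L$ below $U$" condition every element of $L$ is $< r$; combined with... actually the cleaner route is: assume $\neg\neg(q \in L)$. Choose rationals $q < s < r$. By locatedness, $q \in L \lor s \in U$, and again $s \in L \lor r \in U$. The disjuncts involving $U$ are incompatible with $\neg\neg(q\in L)$: if $s \in U$ then since $L$ is below $U$ and $L$ is rounded, $q \in L$ would force $q < q$ via... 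I would instead argue directly that $s \in U \lthen \neg(q \in L)$ is false under our hypothesis, hence $\neg\neg(s \in U)$ is false, hence (since $s \in U \lor q \in L$ and we can use the decidable-order-driven case split) we land in $q \in L$. Concretely: from $\neg\neg(q\in L)$ and the fact that $s \in U$ implies $\neg(q \in L)$ (because $q \in L \land s \in U \lthen q < s$, which is fine, so this needs the rounded/lower structure more carefully — one uses that $q \in L$ and $q < s$ and $s \in U$ give $s \in U$ and $q < s$, no contradiction yet; the contradiction comes from picking $s$ so that actually we compare the right endpoints). Let me restructure: assume $\neg\neg(q \in L)$; I claim $q \in L$. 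By roundedness it suffices to find $r \in L$ with $q < r$. Take rationals $q < s_1 < s_2$. By locatedness $q < s_1$ gives $q \in L \lor s_1 \in U$; by locatedness $s_1 < s_2$ gives $s_1 \in L \lor s_2 \in U$. Since order on $\objQ$ is decidable, these disjunctions are genuine case splits we may push inside a $\neg\neg$. In the case $s_1 \in L$ we are done (it witnesses $q \in L$ by roundedness/lowerness since $q < s_1$). In the case $s_2 \in U$: then no rational $\ge s_2$ lies in $L$ (as $L$ below $U$ forces elements of $L$ to be $< s_2$), but this is consistent with $q \in L$, so this case alone is not contradictory — hence I cannot conclude.

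This shows the genuine subtlety, so the main obstacle is exactly pinning down why membership in a cut is $\neg\neg$-stable. The correct and clean argument: $q \in L \liff \neg(q \in U)$ for a located cut, because locatedness (with any $q < r$) plus "$L$ below $U$" give that $q \in L$ and $q \in U$ are mutually exclusive and, using roundedness on both sides, jointly exhaustive in the weak sense that $\neg(q \in U) \lthen q \in L$: indeed if $\neg(q\in U)$, pick $q' < q$; locatedness on $q' < q$ gives $q' \in L \lor q \in U$, and the second disjunct is excluded, so $q' \in L$, and now... no, that gives $q' \in L$ not $q \in L$. Use instead $q < r$: locatedness gives $q \in L \lor r \in U$; if $r \in U$ for all $r > q$ then by upper-roundedness $q \in U$, contradicting $\neg(q \in U)$; so for some $r > q$ we have $\neg(r \in U)$, hence $q \in L \lor r \in U$ with the right disjunct false is still not decidable... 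The honest resolution is that $q \notin U \lthen q \in L$ is provable: suppose $q \notin U$. Then I claim $\all{r}(q < r \lthen q \in L)$ is not what we want either. I would ultimately fall back on the standard fact, provable in any topos, that for a located two-sided cut, $q \in L \liff \neg (q \in U)$, whose forward direction is immediate from "$L$ below $U$" and whose backward direction uses locatedness applied at a slightly larger rational together with roundedness of $U$; granting this, $q \in L$ is $\neg\neg$-stable because it is a negation, and I complete the proof as in the second paragraph. The crux of the write-up is getting this characterization stated and proved cleanly, and I expect that to be the one place requiring genuine care rather than routine unfolding.
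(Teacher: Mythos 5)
Your strategy reduces the proposition to the claim that, for a Dedekind cut $(L,U)$ and a rational $q$, membership $q \in L$ is $\lnot\lnot$-stable, and you ultimately rest this on the asserted equivalence $q \in L \liff \lnot(q \in U)$. Both claims are false. The equivalence fails already for the cut representing a rational number $q$ itself, where $q \notin L$ and $q \notin U$; the correct characterization is $q \in L \liff \some{r} (q < r \land \lnot(r \in U))$, an existential of negations, which is not itself a negation and is not stable. Indeed $q \in L$ is not $\lnot\lnot$-stable intuitionistically: in the topos of sheaves on $\RR$, with the Dedekind real $x(t) = t^2$ and $q = 0$, the truth value of $0 < x$ is $\RR \setminus \set{0}$, whose double negation is all of $\RR$. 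Since the proposition carries a star, its proof must be valid in every topos, so this route cannot be repaired; your own repeated false starts in the middle of the argument are symptoms of this, as every attempt to extract $q \in L$ from purely negative information stalls precisely because no such implication holds.

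The paper's proof avoids the issue by never trying to stabilize a bare membership statement. To show $L_x \subseteq L_y$ from $\lnot\lnot(x = y)$, it starts from the \emph{positive} hypothesis $q \in L_x$, uses roundedness to produce $r \in L_x$ with $q < r$, transports the double negation to obtain $\lnot\lnot(r \in L_y)$, and then applies locatedness of $y$ to the pair $q < r$: either $q \in L_y$, which is the goal, or $r \in U_y$, which implies $\lnot(r \in L_y)$ and contradicts $\lnot\lnot(r \in L_y)$. Locatedness thus converts the negative information into the desired positive membership at a strictly smaller rational, which is exactly the slack your argument lacks.
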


\begin{proof}
  Suppose $x = (L_x, U_x)$, $y = (L_y, U_y)$ and $\lnot\lnot(x = y)$.
  We only prove $L_x \subseteq L_y$, as the other three inclusions are proved symmetrically.
  Suppose $q \in L_x$. Because $L_x$ is rounded, there is $r \in \QQ$ such that $q < r \in L_x$.
  From $\lnot\lnot (L_x = L_y)$ follows that $\lnot\lnot (r \in L_y)$.
  Because~$y$ is located, $q \in L_y$ or $r \in U_y$. In the first case we are done, while the second case cannot happen, for if $r \in U_y$ then $\lnot (r \in L_y)$, contradicting $\lnot\lnot (r \in L_y)$.
\end{proof}

\begin{propositionC}
  \label{prop:stradle-closelyd}
  For any cut $(L, U)$ and $k \in \NN$ there exists $q \in \QQ$ such that $q - 2^{-k} \in L$ and $q + 2^{-k} \in U$.
\end{propositionC}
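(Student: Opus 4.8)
The plan is to approximate the cut $(L,U)$ from both sides by rationals, using a constructive bisection driven by locatedness, and then ``spread out'' a sufficiently close pair of rational bounds to distance $2^{-k}$ on either side of a single rational $q$.

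First I would fix rational witnesses $a_0 \in L$ and $b_0 \in U$, which exist by inhabitedness, and note $a_0 < b_0$ because $L$ lies below $U$. The heart of the argument is a bisection step: one cannot test whether the midpoint of $[a_n,b_n]$ lies in $L$ or in $U$, but locatedness applies to any two \emph{distinct} rationals, so I would instead use the interior points $c \defeq a_n + (b_n-a_n)/3$ and $d \defeq a_n + 2(b_n-a_n)/3$. Since $c < d$, locatedness yields $c \in L \lor d \in U$; in the first case pass to $[c,b_n]$, in the second to $[a_n,d]$. In either case the new endpoints still witness membership in $L$ and $U$ respectively, remain strictly ordered, and the width is multiplied by $2/3$. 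Formally I would prove by induction on $n$ the statement ``there exist $a \in L$ and $b \in U$ with $a < b$ and $b - a \le (2/3)^n(b_0 - a_0)$'', the induction step being exactly the disjunction elimination just described; this invokes no choice principle, only $\lor$-elimination.

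Since $(3/2)^n \to \infty$ (for instance by Bernoulli's inequality $(3/2)^n \ge 1 + n/2$), I would then choose $n$ with $(2/3)^n(b_0 - a_0) < 2^{-k+1}$, obtaining $a \in L$ and $b \in U$ with $b - a < 2^{-k+1}$. Setting $q \defeq (a+b)/2 \in \QQ$, the inequality $b - a < 2^{-k+1}$ forces $q - 2^{-k} < a$ and $q + 2^{-k} > b$. Hence $q - 2^{-k} \in L$ because $L$ is downward closed (the ``lower'' half of lower-roundedness) and $a \in L$, and symmetrically $q + 2^{-k} \in U$ because $U$ is upward closed and $b \in U$, which is exactly what is required.

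The only genuinely delicate point is the bisection step: recognizing that the midpoint cannot be decided and that locatedness must be invoked on two separated interior points instead, which is why the width contracts by a factor $2/3$ rather than $1/2$. Everything else is routine bookkeeping with decidable rational arithmetic and the two one-sided closure properties of the cut.
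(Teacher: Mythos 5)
Your proposal is correct and follows essentially the same route as the paper's proof: the same trisection points $(2a+b)/3$ and $(a+2b)/3$ fed to locatedness, the same $(2/3)^n$ contraction established by induction (with $\lor$-elimination rather than choice), and the same final step of taking the midpoint and using the downward/upward closure of $L$ and $U$. The only cosmetic difference is that you observe $b-a<2^{-k+1}$ already suffices, whereas the paper shrinks to width below $2^{-k}$.
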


\begin{proof}
  There are $s \in L$ and $t \in U$. Let us show by induction on $j \in \NN$ that there are $u \in L$ and $v \in U$ such that $v - u \leq (2/3)^j (t - s)$. At $j = 0$ we take $u = s$ and $v = t$.
  The induction step from $j$ to $j+1$ proceeds as follows. By the induction hypothesis there are $u' \in L$ and $v' \in U$ such that $v' - u' \leq (2/3)^j (t - s)$. By locatedness $(2 u' + v')/3 \in L$ or $(u' + 2 v')/3 \in U$. In the first case we take $u = (2 u' + v')/3$ and $v = v'$, and in the second $u = u'$ and $v = (u' + 2 v')/3$.

  Now let $k \in \NN$ be given. There is $j \in \NN$ such that $(2/3)^j (t - s) < 2^{-k}$. We proved that there exist $u \in L$ and $v \in U$ such that $v - u \leq (2/3)^j (t - s) < 2^{-k}$. We may take $q = (u + v)/2$ because
  $q - 2^{-k} < v - 2^{-k} < u$ hence $q - 2^{-k} \in L$, and similarly for $q + 2^{-k} \in U$.
\end{proof}

To facilitate calculations in parameterized realizability, we find an object that is isomorphic to~$\RRd$ but whose interpretation in assemblies is straightforward.
For any set $A$ and subset $S \subseteq A$, let $\compl{S} \defeq \set{x \in A \such \neg (x \in S)}$ be the complement of~$S$, and
\begin{equation*}
  \powcl{A} \defeq \set{S \in \pow{A} \such \all{x \in A} \neg\neg(x \in S) \lthen x \in S}
\end{equation*}
the set of $\neg\neg$-stable subsets of~$A$. In the same way that $\pow{A}$ is isomorphic to the set $\Omega^A$ of characteristic maps on~$A$, $\powcl{A}$ is isomorphic to $\ClProp^A$, where
\begin{equation*}
  \ClProp \defeq \set{p \in \Omega \such \neg\neg p \lthen p}
\end{equation*}
is the set of $\neg\neg$-stable truth values. $\ClProp$ is a complete Boolean algebra that can be used instead of~$\Omega$ in the definition of Dedekind cuts, as follows:

\begin{definition}
  A \defemph{classical Dedekind cut} is a pair $(L, U) \in \powcl{\objQ} \times \powcl{\objQ}$
  of $\neg\neg$-stable subsets of rationals, satisfying the following conditions, where $r$ and $q$ range over~$\objQ$:
  \begin{enumerate}
  \item $L$ and $U$ are not empty: $\lnot \all{q} q \not\in L$ and $\lnot \all{r} r \not\in U$,
  \item $L$ is lower and $U$ upper:
    \begin{equation*}
      q < r \land r \in L \lthen q \in L
      \quad\text{and}\quad
      q \in U \land q < r \lthen r \in U,
    \end{equation*}
  \item $L$ has no maximum and $U$ no minimum:
    \begin{equation*}
      (\all{r} r \in L \lthen r \leq q) \lthen q \not\in L
      \quad\text{and}\quad
      (\all{q} q \in U \lthen r \leq q) \lthen r \not\in U,
    \end{equation*}
  \item $L$ is below $U$: $q \in L \land r \in U \lthen q < r$,
  \item $L$ and $U$ are tight: $q \not\in L \land r \not\in U \lthen r \leq q$.
  \end{enumerate}
  We write $\ClCut{L, U}$ for the conjunction of the above conditions.
  The set of \defemph{classical Dedekind reals} is
  \begin{equation*}
    \RRcl \defeq \set{ (L, U) \in \powcl{\objQ} \times \powcl{\objQ} \such \ClCut{L, U}}.
  \end{equation*}
\end{definition}

Dedekind cuts turn out to be those classical Dedekind cuts that have arbitrarily good rational approximations.

\begin{theoremC}
  \label{thm:reals-sub-classical}
  The set of Dedekind reals $\RRd$ is isomorphic to
  \begin{equation}
    \label{eq:reals-sub-classical}%
    R_d \defeq \set{ (L, U) \in \RRcl \such 
       \all{k \in \NN} \some{q \in \QQ} q - 2^{-k} \in L \land q + 2^{-k} \in U
     }.
   \end{equation}
\end{theoremC}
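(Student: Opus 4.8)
The plan is to exhibit mutually inverse maps between $\RRd$ and $R_d$. In one direction, starting from a Dedekind cut $(L,U)$, I would check that it already lands in $R_d$: the rounded, located, inhabited, below conditions of \cref{def:dedekind-reals} imply each of the five conditions in the definition of a classical Dedekind cut, and the approximation condition in \eqref{eq:reals-sub-classical} is exactly \cref{prop:stradle-closelyd}. The only genuine content here is that $L$ and $U$ are $\neg\neg$-stable, i.e.\ lie in $\powcl{\objQ}$, so that $(L,U)$ is even a candidate element of $\RRcl$; this follows by the same located-ness argument used in the proof of \cref{prop:RRd-stable-equality}: if $\neg\neg(q\in L)$, pick by roundedness $r$ with $q<r\in L$ — wait, that is the wrong direction — rather, if $\neg\neg(q \in L)$ and I want $q \in L$, use roundedness applied hypothetically together with locatedness: for any $r > q$, locatedness gives $q \in L \lor r \in U$; the right disjunct together with below-ness contradicts $\neg\neg(q\in L)$ only after choosing $r$ inside $L$, so instead I pick $r<q$ with... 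The cleaner route is: $q\in L \liff \some{r}\,q<r\land r\in L$, and $r\in L$ is decided against $U$ by locatedness at any rational between, so $\neg\neg$-stability of ``$q\in L$'' reduces to $\neg\neg$-stability of ``$\some{r>q}\,r\in L$'', which one derives from roundedness and the fact that $r \in U \limply \neg(r\in L)$. I will spell this out carefully, as it is the first place intuitionistic subtleties bite.

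In the other direction, given $(L,U)\in R_d$, I need to produce a Dedekind cut. The sets $L$ and $U$ are given; I must verify the four conditions of \cref{def:dedekind-reals}. Inhabitedness of $L$: classical cuts only give $\neg\all{q}q\notin L$, which is weaker, so here is where the approximation condition \eqref{eq:reals-sub-classical} does real work — instantiating it at $k=0$ (say) yields an actual rational $q$ with $q-1\in L$, hence $L$ is inhabited, and dually for $U$. Lower-rounded: ``$L$ is lower'' is given directly; ``$L$ is rounded'', i.e.\ $q\in L\lthen\some{r}\,q<r\in L$, is the contrapositive of ``$L$ has no maximum'' — if no such $r$ existed then $q$ would bound $L$ from above, contradicting the no-maximum clause. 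Below-ness is shared verbatim. Located: given $q<r$, use the approximation condition at a $k$ with $2^{-k}<(r-q)/2$ to get $s$ with $s-2^{-k}\in L$ and $s+2^{-k}\in U$; since $s-2^{-k}$ and $s+2^{-k}$ differ by $2^{-k+1}<r-q$, the interval $[q,r]$ contains at least one of the two endpoints' ``sides'', so by the lower/upper properties either $q\in L$ or $r\in U$ — I will need to case on the (decidable, since rationals) position of $q$ and $r$ relative to $s$ to push this through, but it is elementary rational arithmetic.

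Finally I must check the two composites are the identity. From $\RRd$ to $R_d$ and back is immediate since the underlying pair $(L,U)$ is unchanged. Going $R_d\to\RRd\to R_d$ is also identity on the underlying pair; the only thing to confirm is that the $\RRd$-cut recovered from $(L,U)\in R_d$ has the \emph{same} $L$ and $U$, not merely $\neg\neg$-equal ones — but since we never modified the sets, this is trivial. One should also note that the isomorphism is tracked by realizers in the relevant topos, but since the whole argument is carried out in intuitionistic higher-order logic (as stated at the start of \cref{sec:dedek-real-numb}), internal validity automatically transfers. I expect the main obstacle to be the $\neg\neg$-stability verification in the forward direction and the extraction of genuine inhabitants in the backward direction; both hinge on using roundedness and locatedness in just the right order, and both are the reason the theorem is nontrivial rather than a tautology.
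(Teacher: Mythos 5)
There is a genuine gap, and it sits exactly where you predicted the ``main obstacle'' would be: the lower and upper sets of a Dedekind cut are \emph{not} $\neg\neg$-stable in general, so your forward map (the identity on the underlying pair) does not even land in $\RRcl$. The statement is starred, i.e.\ it must hold in every topos, and in sheaves over $\RR$ the cut of the continuous function $x(t) = |t|$ has $0 \in \dcompl{L}$ at $t = 0$ while $0 \notin L$ there. Your attempted repairs cannot work: locatedness applied to $r < q$ only yields $r \in L$ for all $r < q$, locatedness applied to $r > q$ yields the harmless disjunct $r \in U$, and $\neg\neg\some{r} (q < r \land r \in L)$ does not give you a witness. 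The paper's proof instead takes $f(L,U) \defeq (\dcompl{L}, \dcompl{U})$, which is forced to be in $\powcl{\objQ}\times\powcl{\objQ}$, and then recovers injectivity not from the map being the identity but from \cref{prop:RRd-stable-equality} (double-negation stability of \emph{equality} of Dedekind reals, which does hold).

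The backward direction has the mirror-image gap. You derive roundedness of $L$ from the ``no maximum'' clause by contraposition, but that argument concludes $\some{r}(q < r \land r \in L)$ from its double negation, which is again intuitionistically invalid. The paper sidesteps this by \emph{defining} $\hat{L} \defeq \set{q \such \some{r} q < r \land r \in L}$ (rounded by construction), verifying that $(\hat{L},\hat{U})$ is a genuine Dedekind cut using the approximation condition for inhabitedness and locatedness (the two places where you correctly saw \eqref{eq:reals-sub-classical} doing real work), and then proving $\dcompl{\hat{L}} = L$ to establish surjectivity of $f$. Because neither composite is the identity on underlying sets, this last computation, together with \cref{prop:RRd-stable-equality}, is the actual content of the bijection; your claim that both composites are trivially the identity is where the argument collapses.
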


\begin{proof}
  Let $f : \RRd \to R_d$ take a cut to its double complement, $f(L, U) = (\dcompl{L}, \dcompl{U})$.
  To see that it is well-defined, we must verify that $(\dcompl{L}, \dcompl{U})$ is a classical cut and that
  \begin{equation}
    \label{eq:mvv-reals}
    \all{k \in \NN} \some{q \in \QQ} q - 2^{-k} \in \dcompl{L} \land q + 2^{-k} \in \dcompl{U}.
  \end{equation}
  We dispense with~\eqref{eq:mvv-reals} first: for any $k \in \NN$, by \cref{prop:stradle-closelyd} there is $q \in \QQ$ such that $q - 2^{-k} \in L \subseteq \dcompl{L}$ and $q + 2^{-k} \in U \subseteq \dcompl{U}$.

  Let us verify that $(\dcompl{L}, \dcompl{U})$ is a classical cut, where we spell out only the conditions for $L$, as the reasoning for $U$ is symmetric:
  \begin{enumerate}
  \item $\dcompl{L}$ is non-empty because $L$ is inhabited and $L \subseteq \dcompl{L}$.
  \item $\dcompl{L}$ is lower: if $q < r$ and $r \in \dcompl{L}$ then $q \in \compl{L}$ would imply $r \in \compl{L}$, hence $q \in \dcompl{L}$.
  \item $\dcompl{L}$ has no maximum: if  $\all{r \in \dcompl{L}} r \leq q$ then $\all{r \in L} r \leq q$, hence $q \in \compl{L} = \compl{(\dcompl{L})}$.
  \item $\dcompl{L}$ is below $\dcompl{U}$: if $q \in \dcompl{L}$ and $r \in \dcompl{U}$ then $\lnot\lnot(q < r)$, therefore $q < r$.
  \item $\dcompl{L}$ and $\dcompl{U}$ are tight: suppose $q \not\in \dcompl{L}$ and $r \not\in \dcompl{U}$. If $q < r$ then either $q \in L$, which contradicts $q \not\in \dcompl{L}$, or $r \in U$, which contradicts $q \not\in \dcompl{U}$. Therefore $r \leq q$.
  \end{enumerate}

  It remains to be shown that $f$ is a bijection. For injectivity, suppose $x = (L_x, U_x)$ and $y = (L_y, U_y)$ are cuts such that $\dcompl{L_x} = \dcompl{L_y}$ and $\dcompl{U_x} = \dcompl{U_y}$. Then $\lnot\lnot (x = y)$ and by 
  \cref{prop:RRd-stable-equality}, $x = y$.

  To establish surjectivity of~$f$, take any $(L, U) \in R_d$ and define
  \begin{align*}
    \hat{L} &\defeq \set{q \in \QQ \such \some{r \in \QQ} q < r \land r \in L},
    \\
    \hat{U} &\defeq \set{r \in \QQ \such \some{q \in \QQ} q \in U \land q < r}.
  \end{align*}
  Let us show that $(\hat{L}, \hat{U})$ is a cut, where again we verify only the conditions for~$\hat{L}$ when symmetry permits us to do so:
  \begin{enumerate}
  \item $\hat{L}$ is inhabited, because by~\eqref{eq:mvv-reals} there is $q \in \QQ$ such that $q - 2^{0} \in L$, therefore $q - 2 \in \hat{L}$.
  \item $\hat{L}$ is obviously lower, and it is rounded too: if $q \in \hat{L}$ then $q < r$ and $r \in L$ for some $r$,
    hence $(q + r)/2 \in \hat{L}$ because $(q + r)/2 < r$.
  \item $\hat{L}$ is below $\hat{U}$: if $q \in \hat{L}$ and $r \in \hat{U}$ then there are $s$ and $t$ such that $q < s$, $s \in L$, $t \in U$, and $t < r$, therefore $q < s < t < r$.
  \item To see that $(\hat{L},  \hat{U})$ is located, consider any $q < r$. There is $k \in \NN$ such that $2^{-k+1} < r - q$, and there is $s \in \QQ$ such that $s - 2^{-k} \in L$ and $s + 2^{-k} \in U$. Either $q < s - 2^{-k}$, in which case $q \in \hat{L}$, or $s + 2^{-k} < r$, in which case $r \in \hat{U}$.
  \end{enumerate}
  Finally, we claim that $\dcompl{\hat{L}} = L$ and $\dcompl{\hat{U}} = U$.
  Notice that $\hat{L} \subseteq L$ because $L$ is lower, hence $\dcompl{\hat{L}} \subseteq \dcompl{L} = L$.
  To show $L \subseteq \dcompl{\hat{L}}$, suppose $q \in L$. If we had $q \in \compl{\hat{L}}$, then it would follow that $\all{r \in L} r \leq q$, and because $L$ has no maximum also $q \not\in L$, a contradiction, hence we conclude $q \in \dcompl{\hat{L}}$.
\end{proof}

\subsection{The Dedekind reals in parameterized realizability}
\label{sec:dedek-reals-param}

We seek an explicit description of the object of Dedekind reals in a parameterized realizability topos. Rather than interpreting \cref{def:dedekind-reals} directly, we compute the classical Dedekind cuts and use~\cref{thm:reals-sub-classical}.

\begin{proposition}
  \label{prop:PRT-classical-reals}%
  In a parameterized realizability topos, the object of classical Dedekind reals is isomorphic to $\nabla\RR$.
\end{proposition}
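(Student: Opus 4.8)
The plan is to show directly that the classical Dedekind reals form a constant assembly. Two ingredients make this work: the ambient object $\powcl{\objQ}\times\powcl{\objQ}$ out of which $\RRcl$ is carved is already constant, and the cut predicate $\ClCut{L,U}$ is $\neg\neg$-stable, so the sub-assembly it defines remains constant, with underlying set exactly the pairs $(L,U)$ that are classical Dedekind cuts in $\Set$ --- which is a copy of $\RR$.

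First I would compute $\powcl{\objQ}$. Interpreted in the topos it is the object of $\neg\neg$-stable subobjects of $\objQ$, equivalently $\ClProp^{\objQ}$; by \cref{sec:assembly-nabl-negn} the object of $\neg\neg$-stable truth values $\ClProp$ is $\nabla\two$, and moreover ${\nabla\two}^{\objQ}\cong\nabla(\two^{\QQ})\cong\nabla(\pow{\QQ})$ because every function $\QQ\to\two$ is realized (maps into a constant assembly are always realized, \cref{sec:constant-assemblies}). Hence $\powcl{\objQ}\cong\nabla(\pow{\QQ})$, and since $\nabla$ preserves finite products, $\powcl{\objQ}\times\powcl{\objQ}\cong\nabla(\pow{\QQ}\times\pow{\QQ})$; a point of this ambient object is then just an external pair $(L,U)$ of subsets of $\QQ$, and ``$q\in L$'', ``$q\in U$'' are interpreted as the $\neg\neg$-stable predicate equal to $\AA$ when $q$ lies in the relevant subset and to $\emptyset$ otherwise.

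Next I would observe that every clause of $\ClCut{L,U}$ is $\neg\neg$-stable: rational $<$ and $\leq$ are decidable on $\objQ$, membership in a $\neg\neg$-stable subset is $\neg\neg$-stable by definition of $\powcl{\objQ}$, and these atoms occur only under $\land$, $\lthen$, $\forall$ and $\neg$ --- there is no disjunction and no existential quantifier, the non-emptiness clauses being phrased as $\neg\all{q} q\notin L$. Since $\neg\neg$-stability is preserved by $\land$, $\lthen$, $\forall$ and automatic under $\neg$, the whole of $\ClCut$ is $\neg\neg$-stable. Writing $\phi\in\Pred{\pow{\QQ}\times\pow{\QQ}}$ for the realizability interpretation of $\ClCut{L,U}$ with $L,U$ free, $\RRcl$ is the sub-assembly $\set{(L,U)\of\nabla(\pow{\QQ}\times\pow{\QQ})\such\ClCut{L,U}}$ with existence predicate $\phi$ (\cref{sec:sub-assemblies}); since $\phi$ is $\neg\neg$-stable on a constant assembly it is equivalent, by \cref{sec:assembly-nabl-negn}, to $(L,U)\mapsto\set{\R{a}\in\AA\such\phi(L,U)\neq\emptyset}$, so $\RRcl\cong\nabla T$ with $T\defeq\set{(L,U)\in\pow{\QQ}\times\pow{\QQ}\such\phi(L,U)\neq\emptyset}$.

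The only substantive step is to identify $T$ with $\RR$. I would prove, by induction on the structure of the $\neg\neg$-stable formula $\ClCut$, that for constant $(L,U)$ the interpretation of any subformula is inhabited iff that subformula holds in $\Set$: an implication of $\neg\neg$-stable predicates has inhabited interpretation exactly when inhabitedness of the antecedent forces inhabitedness of the consequent; a universal $\all{q\of\objQ}\psi$ has a realizer iff there is a program sending the numeral $\numeral{n}$ of $q=\rat{n}$ to a realizer of $\psi$, which exists uniformly iff $\psi$ is inhabited for every $n$ (being $\neg\neg$-stable, whenever inhabited $\psi$ contains a parameter-independent realizer); the atomic cases are handled by decidability of rational order and by the description of ``$q\in L$'' above. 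Running this through the five clauses shows $(L,U)\in T$ precisely when $L$ and $U$ are lower, upper, non-empty, mutually bounding, without maximum or minimum, and tight --- i.e.\ when $(L,U)$ is a classical Dedekind cut --- so $T$ is (a copy of) $\RR$ and $\RRcl\cong\nabla\RR$, as claimed. The main obstacle is making this induction watertight against the uniformity-in-$\PP$ constraint on realizers; it is dispatched by the observation, used repeatedly above, that a $\neg\neg$-stable subformula, once inhabited, contains an oracle-independent realizer, and that a map into a $\nabla$-object never needs to consult the parameter.
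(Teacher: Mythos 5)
Your proposal is correct and follows essentially the same route as the paper: the identification $\powcl{\objQ}\cong{\nabla\two}^{\objQ}\cong\nabla(\pow{\QQ})$ is the paper's first step verbatim, and your induction on the structure of $\ClCut$ is a detailed verification of what the paper compresses into the single remark that the connectives and relations occurring in $\ClCut{L,U}$ are preserved by $\nabla$. The extra care you take with $\neg\neg$-stability and uniformity in the parameter is exactly the content hiding behind that remark, so nothing is missing and nothing is genuinely different.
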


\begin{proof}
  In \cref{sec:assembly-nabl-negn} we saw that $\nabla\two$ is isomorphic to~$\ClProp$,
  hence $\powcl{\objQ}$ is isomorphic to ${\nabla\two}^\objQ$, which in turn is isomorphic to $\nabla\left(\pow{\QQ}\right)$.
  Because the definition of $\ClCut{L,U}$ uses only logical connectives and relations that
  are preserved by~$\nabla$, the object $\RRcl$ is isomorphic to
  \begin{equation*}
    \nabla \set{(L, U) \in \pow{\QQ} \times \pow{\QQ} \such \ClCut{L, U}}.
  \end{equation*}
  We are done because the classical definition of~$\RR$ appears under~$\nabla$.
\end{proof}

\begin{corollary}
  \label{cor:dedekind-characterization}%
  In $\PRT{\AA, \PP}$ the object of Dedekind reals is (isomorphic to) the assembly $\RRd$ whose underlying set is
  $
    \carrier{\RRd} = \set{x \in \RR \such \Ex{\RRd}(x) \neq \emptyset}
  $
  and whose existence predicate is
  \begin{equation*}
    \Ex{\RRd}(x) \defeq
    \set{\R{r} \in \AA \such
      \all{p \in \PP}
      \all{k \in \NN}
      \some{q \in \QQ}
      \abs{x - q} < 2^{-k} \land
      \R{r} \, \numeral{k} \rz[p] \Ex{\objQ}(q)
    }.
  \end{equation*}
\end{corollary}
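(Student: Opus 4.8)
The plan is to reduce the computation to \cref{thm:reals-sub-classical} and \cref{prop:PRT-classical-reals}, and then unfold the realizability interpretation of the formula that is left over. By \cref{thm:reals-sub-classical}, $\RRd$ is isomorphic to the sub-object $R_d$ of $\RRcl$ classified by the predicate
\[
  \phi(L, U) \defeq \all{k \in \objN} \some{q \in \objQ} (q - 2^{-k} \in L \land q + 2^{-k} \in U),
\]
and by \cref{prop:PRT-classical-reals}, $\RRcl$ is isomorphic to $\nabla\RR$, the isomorphism carrying $x \in \RR$ to the classical cut $(L_x, U_x)$ with $L_x = \set{q \in \QQ \such q < x}$ and $U_x = \set{q \in \QQ \such x < q}$, these being exactly the classical Dedekind cuts of $\pow{\QQ}$ occurring in the image of $\nabla$. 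First I would transport $\phi$ along this isomorphism: the atomic formula $q - 2^{-k} \in L \land q + 2^{-k} \in U$ becomes $q - 2^{-k} < x \land x < q + 2^{-k}$, i.e. $|x - q| < 2^{-k}$, so $R_d$ is carried to the sub-object of $\nabla\RR$ classified by
\[
  \phi(x) \defeq \all{k \in \objN} \some{q \in \objQ} |x - q| < 2^{-k}.
\]

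Next I would appeal to the sub-assembly construction of \cref{sec:sub-assemblies}: since $\phi$ is a strict predicate on the assembly $\nabla\RR$ (the topos universal quantifier conjoins the existence predicate into its interpretation, so strictness is automatic), the sub-object it classifies is the assembly with underlying set $\set{x \in \RR \such \phi(x) \neq \emptyset}$ and existence predicate $\phi$ itself. It therefore only remains to unfold $\phi(x)$. Here I would use that $\Ex{\nabla\RR}(x) = \AA$ is maximally uninformative, so that the atomic relation $|x - q| < 2^{-k}$ is $\neg\neg$-stable and is represented by the strict predicate on $\nabla\RR \times \objN \times \objQ$ equal to the existence predicate of the triple $(x,k,q)$ when $|x - q| < 2^{-k}$ holds and to $\emptyset$ otherwise. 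Applying the topos rule for $\some{q \in \objQ}$ (which here agrees with the tripos existential, the predicate being strict) followed by the topos rule for $\all{k \in \objN}$ from \cref{sec:internal-logic-topos}, one reads off that $\R{a} \in \phi(x)$ precisely when, discarding the vacuous $\combFst\,\R{a}$ component (an — always available — realizer of $\Ex{\nabla\RR}(x)$), $\combSnd\,\R{a}$ is, uniformly in $p \in \PP$, a map sending each $\numeral{k}$ to a realizer of the existence predicate of a triple $(x, k, q)$ with $|x - q| < 2^{-k}$.

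This mechanical computation produces a predicate that differs from the stated $\Ex{\RRd}$ only in the packaging of realizers: the mechanical version buries the code of $q$ inside the existence-predicate packaging of $(x, k, q)$, whereas $\Ex{\RRd}$ asks directly that $\R{r}\,\numeral{k} \rz[p] \Ex{\objQ}(q)$. I would close by exhibiting an isomorphism of assemblies between the two, realized in both directions by short $\lambda$-terms built from $\combPair$, $\combFst$, $\combSnd$ — one direction projects out the code of $q$, the other re-wraps it together with $\numeral{k}$ and an arbitrary filler for the $\nabla\RR$-component. Keeping the uniformity over $p \in \PP$ straight through this translation, and checking the two translations are mutually inverse up to $\eq[\RRd]$, is the only genuinely fiddly part; everything else is a direct application of \cref{thm:reals-sub-classical}, \cref{prop:PRT-classical-reals}, and the topos-logic recipes recorded in \cref{sec:internal-logic-topos} and \cref{sec:sub-assemblies}.
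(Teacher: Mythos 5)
Your proposal is correct and follows the same route as the paper: combine \cref{prop:PRT-classical-reals} with \cref{thm:reals-sub-classical} and identify $\Ex{\RRd}$ as the realizability interpretation of~\eqref{eq:reals-sub-classical} on the sub-assembly of $\nabla\RR$. The paper states this in one sentence and silently absorbs the realizer-repackaging step you flag at the end; your more careful unfolding of the topos-logic rules and the explicit isomorphism between the two packagings is a legitimate elaboration of the same argument, not a different one.
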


\begin{proof}
  By \cref{prop:PRT-classical-reals,thm:reals-sub-classical} the object of Dedekind reals is (isomorphic to) the
  sub-assembly of $\nabla{\RR}$ whose existence predicate is the realizability interpretation of~\eqref{eq:reals-sub-classical}, which is what~$\Ex{\RRd}$ is.
\end{proof}

The takeaway is that a realizer of a Dedekind real~$x$ computes arbitrarily precise rational approximations of~$x$ which \emph{may} depend on the parameter~$p$.

Recall that the strict order $<$ on $\RRd$ is defined by
\begin{equation*}
  x < y \defiff
  \some{q \in \QQ} q \in U_x \land q \in L_y.
\end{equation*}
Thus $x < y$ is realized by a rational that is wedged between~$x$ and~$y$.
However, the following lemma shows that we need not bother because the order is $\neg\neg$-stable.

\begin{lemma}
  \label{lem:lt-stable}%
  In $\PRT{\AA, \PP}$,
  \begin{enumerate}
  \item $\all{x, y \in \RRd} x \neq y \lthen x < y \lor y < x$, and
  \item $\all{x, y \in \RRd} \neg \neg (x < y) \lthen x < y$.
  \end{enumerate}
\end{lemma}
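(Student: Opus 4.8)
The plan is to prove part~(1) directly, by a realizer that runs an unbounded search for a rational separating $x$ and $y$, and then to deduce part~(2) from part~(1) by internal intuitionistic reasoning, so the search need not be rerun.

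For part~(1) I would work with the description of $\RRd$ from \cref{cor:dedekind-characterization}: an element $x \in |\RRd|$ is a genuine real together with a realizer $\R{r}_x \in \Ex{\RRd}(x)$ that computes, for each $p \in \PP$ and $k \in \NN$, a rational $q$ with $|x - q| < 2^{-k}$ and a realizer of $\Ex{\objQ}(q)$ at $p$. Unwinding $x < y \defiff \some{q \in \QQ} q \in U_x \land q \in L_y$ through the isomorphism of \cref{thm:reals-sub-classical}, a realizer of $x < y$ amounts to a rational $q$ with $x < q < y$ together with a realizer of $\Ex{\objQ}(q)$ (the rational is allowed to depend on the parameter $p$), plus the existence realizers of $x$ and $y$. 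Two elementary metatheoretic observations get the proof going: since $\PP \neq \emptyset$, the predicate $x < y$ has a realizer only if $x < y$ holds between the actual reals; and $x \eq[\RRd] y$ has a realizer exactly when $x = y$ as reals, so a realizer of $x \neq y$ forces $x \neq y$ as reals, whence $x < y$ or $y < x$ as reals by trichotomy in the metatheory.

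Given this, the realizer for part~(1) receives the existence realizers $\R{r}_x,\R{r}_y$ (extracted from the ambient $\Ex{\RRd \times \RRd}(x,y)$ realizer) and a realizer of $x \neq y$ which it discards, using it only as a certificate that the following search halts. Using the fixed-point combinator $\comb{Z}$, the realized rational arithmetic and decidable rational order of \cref{sec:natur-numb-integ}, and combinatory completeness, the search proceeds over $n = 0,1,2,\dots$: at the current parameter it computes, via $\R{r}_x\,\numeral{n}$ and $\R{r}_y\,\numeral{n}$, rationals $q^n_x, q^n_y$ with $|x - q^n_x| < 2^{-n}$ and $|y - q^n_y| < 2^{-n}$; if $q^n_y - q^n_x > 2^{-n+1}$ it halts with the left disjunct and witnessing rational $(q^n_x + q^n_y)/2$, if $q^n_x - q^n_y > 2^{-n+1}$ it halts with the right disjunct and the same rational, and otherwise it continues with $n+1$. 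A one-line estimate shows $(q^n_x + q^n_y)/2$ does lie strictly between $x$ and $y$ (resp. between $y$ and $x$) whenever the corresponding test fires; and if, say, $x < y$ as reals then the ``right'' test fires at no parameter and no $n$, while the ``left'' test fires as soon as $2^{-n+2} < y - x$, a bound independent of the parameter (symmetrically when $y < x$). Hence the search halts at every parameter, always returning the same disjunct tag, so it realizes $x < y \lor y < x$; and it is a single uniform realizer, being assembled from $\ucode{\cdot}$-terms and the given realizers. This proves $\all{x, y \in \RRd} x \neq y \lthen x < y \lor y < x$.

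For part~(2) I would argue inside the topos: from $\neg\neg(x < y)$ and $x < y \lthen x \neq y$ (irreflexivity of $<$ on $\RRd$) we get $\neg\neg(x \neq y)$, hence $x \neq y$, since any negated proposition is $\neg\neg$-stable; part~(1) then gives $x < y \lor y < x$, and in the case $y < x$ transitivity and irreflexivity of $<$ yield $\neg(x < y)$, contradicting $\neg\neg(x < y)$, so $x < y$ holds in either case. (Equivalently, the ``left'' branch of the search above realizes part~(2) on its own.) I expect the only real friction to be bookkeeping: nailing down the precise realizability conditions for $x <_{\RRd} y$, for $x \eq[\RRd] y$, and for the topos-level disjunction and implication — in particular that the disjunction tag must agree across all parameters, which is exactly why it matters that the ``wrong'' test never fires — and checking that every metatheoretic fact used to certify termination really is a fact about the reals $x, y$ alone and not about the parameters.
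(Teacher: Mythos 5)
Your proposal is correct and follows essentially the same route as the paper: part (1) is realized by an unbounded search for a precision $k$ at which the two rational approximations differ by more than $2^{-k+1}$ (terminating because $x \neq y$ holds of the underlying reals, uniformly in the parameter), after which the comparison of the two rationals determines the disjunct and yields a wedged rational witness; part (2) is then an exercise in intuitionistic logic. The only differences are cosmetic — your witness is the midpoint rather than $q^k_x + 2^{-k}$, and you spell out the intuitionistic derivation of (2) that the paper leaves to the reader.
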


\begin{proof}
  Deriving the second statement from the first one is an exercise in intuitionistic logic,
  so we only describe how to realize the first statement, which is sometimes referred to as the
  \defemph{analytic Markov's principle}~\cite{shulman18:brouw}.
  Suppose $x, y \in \carrier{\RRd}$, $\R{r} \in \Ex{\RRd}(x)$, $\R{t} \in \Ex{\RRd}(y)$ and~$p \in \PP$.
  Search for the least~$k \in \NN$ such that $\abs{\rat{\R{r} \app[p] k} - \rat{\R{t} \app[p] k}} > 2^{-k+1}$.
  It will certainly be found because $x \neq y$.
  With~$k$ in hand, compare $\rat{\R{r} \app[p] k}$ and $\rat{\R{t} \app[p] k}$.
  If $\rat{\R{r} \app[p] k} < \rat{\R{t} \app[p] k}$ then
  \begin{equation*}
    x < \rat{\R{r} \app[p] k} + 2^{-k}
      < \rat{\R{t} \app[p] k} - 2^{-k}
      < y,
  \end{equation*}
  therefore $x < y$ may be realized by $\rat{\R{r} \app[p] k} + 2^{-k}$.
  If $\rat{\R{t} \app[p] k} < \rat{\R{r} \app[p] k}$ then $y < x$ symmetrically.
\end{proof}

\subsection{The Cauchy reals}
\label{sec:cauchy-reals}
It is instructive to compare the Dedekind and Cauchy reals in parameterized realizability.
We identify the Cauchy reals as those Dedekind reals that are limits of Cauchy sequences of rationals. Since we work without the axiom of countable choice, rapid convergence should be imposed.

\begin{definition}
  \label{def:cauchy-real}
  A \defemph{Cauchy real} is a Dedekind real $x \in \RRd$ which is the limit of a rapidly converging rational sequence, i.e., the set of Cauchy reals is
  \begin{equation*}
    \RRc \defeq \set{x \in \RRd \such \some{q \in \objQ^\objN} \all{n \in \objN} \abs{x - q_n} < 2^{-n}}.
  \end{equation*}
\end{definition}

It would make no difference if we used the classical reals:

\begin{propositionC}
  \label{prop:cauch-real-sub-classical}%
  The set of Cauchy reals $\RRc$ is isomorphic to
  \begin{equation}
    \label{eq:cauchy-characterization}%
    R_c \defeq \set{x \in \RRcl \such \some{q \in \objQ^\objN} \all{n \in \objN} \abs{x - q_n} < 2^{-n}}.
  \end{equation}
\end{propositionC}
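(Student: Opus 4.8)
The plan is to restrict the isomorphism $f \colon \RRd \xrightarrow{\sim} R_d$ constructed in the proof of \Cref{thm:reals-sub-classical}, which sends a cut $(L,U)$ to its double complement $(\dcompl{L}, \dcompl{U})$, and to check that it carries the Cauchy condition back and forth. Note first that $\RRc \subseteq \RRd$ by definition, and that $R_c \subseteq R_d$: if $z = (L_z, U_z) \in \RRcl$ admits $q \in \objQ^\objN$ with $\all{n \in \objN} |z - q_n| < 2^{-n}$, then for each $k \in \NN$ we have $q_k - 2^{-k} < z < q_k + 2^{-k}$, which by the lower and upper conditions on the classical cut gives $q_k - 2^{-k} \in L_z$ and $q_k + 2^{-k} \in U_z$, so $z \in R_d$. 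Since $f$ is a bijection $\RRd \to R_d$ and $R_c \subseteq R_d$, it then suffices to show that $x \in \RRc$ iff $f(x) \in R_c$ for $x \in \RRd$; the restriction of $f$ will be the required bijection $\RRc \to R_c$.

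The key point is that $f$ both preserves and reflects the strict order between a rational and a real: for $q \in \QQ$ and $x \in \RRd$,
\begin{equation*}
  q < x \liff q < f(x)
  \qquad\text{and}\qquad
  x < q \liff f(x) < q,
\end{equation*}
where as usual $q < z$ unfolds to $\some{r \in \QQ} q < r \land r \in L_z$ and $z < q$ to $\some{r \in \QQ} r < q \land r \in U_z$. The forward implications follow at once from $L \subseteq \dcompl{L}$ and $U \subseteq \dcompl{U}$. For the reverse ones, suppose $\some{r \in \QQ} q < r \land r \in \dcompl{L}$, pick such an $r_0$, and note that any rational $r_1$ with $q < r_1 < r_0$ lies in $L$: indeed, $f^{-1}\circ f = \id$ from \Cref{thm:reals-sub-classical} identifies $L$ with $\set{t \in \QQ \such \some{u \in \QQ} t < u \land u \in \dcompl{L}}$, and $r_0$ witnesses $r_1$ in that set. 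Hence $\some{r \in \QQ} q < r \land r \in L$, i.e. $q < x$; the case of $U$ is symmetric.

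Granting the displayed equivalences, the proposition follows immediately. Using $|x - q_n| < 2^{-n} \liff q_n - 2^{-n} < x \land x < q_n + 2^{-n}$ (valid constructively in any model of the reals), a witness $q \in \objQ^\objN$ for $x \in \RRc$ is, termwise by the equivalences, also a witness for $\all{n \in \objN} |f(x) - q_n| < 2^{-n}$; together with $f(x) \in R_d \subseteq \RRcl$ this gives $f(x) \in R_c$. Conversely, a witness for $z \in R_c$ is, by the reverse halves of the equivalences, a witness for $f^{-1}(z) \in \RRc$. Therefore $f$ restricts to a bijection $\RRc \to R_c$.

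The only subtlety — and it is the one place to be careful — is the reverse direction of the displayed equivalences: one must resist reading ``$q < f(x)$'' as ``$q \in \dcompl{L}$'', since $\dcompl{L}$ is strictly larger than $L$ in general. The correct reading is the rounded one, $\some{r \in \QQ} q < r \land r \in \dcompl{L}$, and it is precisely the identity $L = \hat{\dcompl{L}}$ from \Cref{thm:reals-sub-classical} — double-complementing and then rounding returns the original cut — that lets the witness be pulled back into $L$. Beyond this bookkeeping no new idea is needed, since the Cauchy condition is phrased entirely in terms of the order between rationals and the real and so transfers along the isomorphism for free.
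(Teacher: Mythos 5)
Your proposal is correct and takes essentially the same route as the paper: the paper's proof is a single sentence observing that the rapid-approximation condition places $R_c$ inside $R_d$, so the isomorphism of \Cref{thm:reals-sub-classical} restricts. You have simply written out in full the verification (via $L \subseteq \dcompl{L}$ and $\hat{\dcompl{L}} = L$) that the Cauchy condition transfers both ways along that isomorphism, which the paper leaves implicit.
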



\begin{proof}
  Any classical real $x \in \RRcl$ that is the limit of a rapidly converging rational sequence is also a Dedekind real.
\end{proof}

The previous proposition tells us how to compute the object of Cauchy reals.

\begin{corollary}
  \label{cor:cauchy-characterization}%
  In $\PRT{\AA, \PP}$ the object of Cauchy reals is (isomorphic to) the assembly $\RRc$ whose
  underlying set is
  $
    \carrier{\RRc} \defeq \set{x \in \RR \such \Ex{\RRc}(x) \neq \emptyset}
  $
  and the existence predicate is
  \begin{multline*}
    \Ex{\RRc}(x) \defeq
    \{\R{r} \in \AA \such
      \some{q \in \QQ^\NN}
      \all{p \in \PP}
      \all{k \in \NN}
      \abs{x - q_k} < 2^{-k} \land
      \R{r} \, \numeral{k} \rz[p] \Ex{\objQ}(q_k)
   \}.
  \end{multline*}
\end{corollary}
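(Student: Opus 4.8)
The plan is to mimic the proof of \cref{cor:dedekind-characterization}, using \cref{prop:cauch-real-sub-classical} in place of \cref{thm:reals-sub-classical}. By \cref{prop:cauch-real-sub-classical} the object of Cauchy reals is isomorphic to $R_c$, the sub-object of $\RRcl$ cut out by the predicate
\begin{equation*}
  \psi(x) \defeq \some{q \in \objQ^\objN} \all{n \in \objN} |x - q_n| < 2^{-n},
\end{equation*}
and by \cref{prop:PRT-classical-reals} we may replace $\RRcl$ by $\nabla\RR$ throughout. Equivalently --- and this is the most direct route --- \cref{def:cauchy-real} already presents $\RRc$ as the sub-assembly of $\RRd$ carved out by $\psi$, so by the description of sub-assemblies and by \cref{cor:dedekind-characterization} it suffices to unfold the realizability interpretation of $\psi$ over the base object $\RRd$ and check that, up to the evident equivalence of strict predicates, it equals the displayed $\Ex{\RRc}$; restricting to those $x$ for which it is inhabited then yields $\carrier{\RRc}$, exactly as in \cref{cor:dedekind-characterization}.

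First I would unfold the outer existential: by the description of the topos existential in \cref{sec:internal-logic-topos} it is inherited from the tripos one, so a realizer of $\psi(x)$ is a realizer of the body $\all{n \in \objN} |x - q_n| < 2^{-n}$ for a \emph{single} sequence $q \in \QQ^\NN$, chosen outside any quantification over $\PP$. Next I would unfold the bounded universal using the topos-$\forall$ rule from \cref{sec:internal-logic-topos}: since $\Ex{\objN}(n) = \set{\numeral{n}}$, a realizer yields an $\R{r} \in \AA$ which, uniformly in $p \in \PP$, sends $\numeral{n}$ to a realizer of $|x - q_n| < 2^{-n}$, and which (in its $\combFst$ component) also carries realizers of $x$ and of $q$ for the existence predicate of the product $\RRd \times \objQ^\objN$. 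Finally I would simplify the atomic part: by \cref{lem:lt-stable} the order $<$ on $\RRd$ is $\neg\neg$-stable, so $|x - q_n| < 2^{-n}$, being true, carries no realizer information, and the only informative content of $\R{r}\,\numeral{n}$ is a realizer of $q_n$ qua element of $\objQ$, i.e. $\R{r}\,\numeral{n} \rz[p] \Ex{\objQ}(q_n)$. Reading this back off, a realizer of $\psi(x)$ is exactly an $\R{r} \in \AA$ together with a sequence $q \in \QQ^\NN$ such that $\all{p \in \PP}\all{k \in \NN}\ |x - q_k| < 2^{-k} \land \R{r}\,\numeral{k} \rz[p] \Ex{\objQ}(q_k)$, which is the definition of $\Ex{\RRc}(x)$. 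The combinators witnessing that the form produced by the unfolding and the clean form in the statement define isomorphic sub-assemblies are routine --- in one direction project out the $\Ex{\objQ^\objN}$-component, in the other pad with a trivial realizer of the (true, $\neg\neg$-stable) inequalities --- and strictness over $\RRd$ is realized by $\R{a}\mapsto\combFst\,(\combFst\,\R{a})$, using $\Ex{\RRc}(x)\subseteq\Ex{\RRd}(x)$.

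The main obstacle --- and the only genuine difference from \cref{cor:dedekind-characterization} --- is the placement of the quantifiers. In $\Ex{\RRd}(x)$ the rational approximation $q$ is produced \emph{inside} $\all{p \in \PP}$, hence may vary with the parameter; here the witnessing sequence $q$ must be fixed \emph{before} quantifying over $\PP$, since $\some{q \in \objQ^\objN}$ sits outside the $\all{n}$ whose interpretation is what introduces the quantification over parameters. Getting this order right in the unfolding is the whole point: it is precisely the gap between ``$x$ has rational approximations parametrically in $\PP$'' and ``$x$ is the limit of one rapidly converging rational sequence computed uniformly in $\PP$'', and is the reason $\RRc$ is a proper sub-object of $\RRd$ in the topos of \cref{sec:topos-with-countable}. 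A minor secondary point is that the sequence $q$, although a priori merely an element of $\QQ^\NN$, automatically lies in $\carrier{\objQ^\objN}$ once $\R{r}$ realizes it, so nothing is lost by letting $q$ range over all of $\QQ^\NN$ in the displayed formula.
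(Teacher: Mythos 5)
Your proposal is correct and follows the same route as the paper, which simply observes that $\Ex{\RRc}$ is the realizability interpretation of the characterization in \cref{prop:cauch-real-sub-classical}, exactly as \cref{cor:dedekind-characterization} interprets \cref{thm:reals-sub-classical}. Your detailed unfolding of the quantifier order --- the witnessing sequence $q$ being fixed before the quantification over $\PP$ --- correctly identifies the one point of substance, which the paper itself emphasizes in the discussion immediately following the corollary.
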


\begin{proof}
  As in \cref{cor:dedekind-characterization}, the existence predicate $\Ex{\RRc}$ is the realizability interpretation of \eqref{eq:cauchy-characterization}.
\end{proof}

If we write the existence predicate for the Dedekind reals in the equivalent form\footnote{The equivalence with the definition given in \cref{cor:dedekind-characterization} relies on countable choice \emph{outside} the topos.}
\begin{multline*}
  \Ex{\RRd}(x) \defeq
  \{\R{r} \in \AA \such
    \all{p \in \PP}
    \some{q \in \QQ^\NN}
    \all{k \in \NN}
    \abs{x - q_k} < 2^{-k} \land
    \R{r} \, \numeral{k} \rz[p] \Ex{\objQ}(q_k)
  \},
\end{multline*}
the difference between the Dedekind and Cauchy reals is seen to be just one of switching the quantifiers:
a rapid sequence representing a Dedekind real may depend on the parameter, whereas one representing a Cauchy real may not.
\begin{theorem}
  \label{thm:cauchy-uncountable}%
  In $\PRT{\AA,\PP}$ the Cauchy reals are sequence-avoiding.
\end{theorem}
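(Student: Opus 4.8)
The plan is to realize, inside $\PRT{\AA,\PP}$, the internal statement
\[
  \all{a \in \RRc^{\objN}} \some{x \in \RRc} \all{n \in \objN} x \neq a_n,
\]
which (as explained in the introduction) is exactly the assertion that $\RRc$ is sequence-avoiding. By \cref{cor:cauchy-characterization} a realizer $\R{e}$ of $a \in \RRc^{\objN}$ supplies, for every $n$ and every $p \in \PP$, a realizer $p \at \R{e}\,\numeral{n}$ of $a_n \in \RRc$; feeding it successive numerals yields rationals $q^{(n)}_k$ with $|a_n - q^{(n)}_k| < 2^{-k}$ — these we may compute, but they may \emph{a priori} depend on $p$. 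I would then run a deterministic version of the nested-interval construction of \cref{thm:R-uncountable}: starting from a rational interval $[x_0,y_0]$, at stage $n$ compute a rational approximation $r$ of $a_n$ to a precision $\delta_n$ small compared with the current width, and — using that the two options in \eqref{eq:R-uncountable} overlap — decide from the comparison of $r$ with the endpoints a sub-interval $[x_{n+1},y_{n+1}] \subseteq [x_n,y_n]$ lying \emph{strictly} to one side of $a_n$. This makes all choices definite, with no appeal to excluded middle or choice, exactly as in the footnote to \cref{thm:R-uncountable}. Using combinatory completeness and the fixed-point combinator $\comb{Y}$, one packages this into a realizer producing the rational sequences $(x_n)$, $(y_n)$ from $\R{e}$.

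The common limit $x \defeq \lim_n x_n = \lim_n y_n$ should then be shown to be a Cauchy real avoiding every $a_n$. It is Cauchy because the $x_n$ are rational and the widths $y_n - x_n$ tend to $0$ geometrically, so precomposing with a suitable modulus gives a rapidly converging rational sequence with limit $x$, hence a realizer of $x \in \RRc$ in the sense of \cref{cor:cauchy-characterization}. And $x \neq a_n$ for each $n$: stage $n$ guarantees $a_n < x_{n+1}$ or $y_{n+1} < a_n$, so the relevant rational endpoint is strictly between $x$ and $a_n$, which realizes $a_n < x \lor x < a_n$, whence $x \neq a_n$. Assembling the pieces via combinatory completeness gives the required realizer $\R{m}$ with $\R{m}\,\R{e}$ realizing $\some{x \in \RRc}\all{n}x\neq a_n$ at every $p \in \PP$.

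The main obstacle is parameter-uniformity, and it is exactly here that the Cauchy reals behave differently from the Dedekind reals. A realizer of $x \in \RRc$ must present a rapid rational sequence for $x$ that does \emph{not} depend on the parameter (the point of the discussion following \cref{cor:cauchy-characterization}), whereas the approximations of $a_n$ read off $\R{e}$ genuinely vary with $p$; the construction must therefore be arranged so that the $x_n$, and a realizer computing them, come out parameter-free. The device is to demand the precision $\delta_n$ at stage $n$ to shrink fast, and to position $[x_{n+1},y_{n+1}]$ at an \emph{absolute} location determined by the approximation of $a_n$ together with an offset depending only on $n$, rather than relative to the previous interval, so that changing the parameter moves the stage-$n$ endpoints by at most $2\delta_n$ with no accumulation; in the limit the parameter-dependence is squeezed to zero and $x$ becomes a single real admitting a parameter-uniform realizer. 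I expect the careful bookkeeping of this parameter-collapse — and checking it can be maintained simultaneously with strict avoidance of $a_n$ at every stage — to be the technical heart of the proof; by contrast, running the same construction with the Dedekind reals (\cref{cor:dedekind-characterization}) would only yield a parameter-dependent cut, which is precisely why $\RRd$ can be rendered countable in \cref{sec:topos-with-countable}.
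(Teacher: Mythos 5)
Your diagonalization engine---a deterministic nested-interval construction driven by rational approximations of the $a_n$, with the overlap in \eqref{eq:R-uncountable} resolved by decidable comparisons of rationals---is exactly the one the paper uses. The gap is in your treatment of the parameters, which is the entire content of the theorem. You assert that the witness $x$ and its representing rational sequence must come out \emph{parameter-free}, and you propose to force this by ``absolute positioning'' of the stage-$n$ intervals plus rapidly shrinking precisions $\delta_n$. Neither half of this is right. First, the requirement is weaker than you think: unfolding the realizability interpretation of $\some{x \in \RRc} (\ldots)$ (cf.\ \cref{example:topos-forall-exists}), the witness $x$ and its rapid sequence are chosen \emph{after} the outer parameter $p$, so they may depend on $p$; what they must not depend on is the \emph{inner} parameter $p'$ at which the realizer you output is later applied. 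Second, your squeezing device would not deliver full parameter-independence anyway: stage $n$ makes a discrete left-or-right choice to dodge $a_n$, and when $a_n$ sits near the middle of the current interval that choice can flip as the approximation varies, moving $[x_{n+1},y_{n+1}]$ by a macroscopic amount that no later shrinking of $\delta_n$ can undo (a continuous, nested, $a_n$-avoiding selection of subintervals as a function of the approximation of $a_n$ does not exist, by an intermediate-value argument).

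The missing idea is that no such device is needed, precisely because the inputs are \emph{Cauchy} reals: in $\Ex{\RRc}$ the rational sequence is quantified \emph{before} $\all{p \in \PP}$ (\cref{cor:cauchy-characterization}), so from $\R{e}$ and the fixed outer parameter $p$ one extracts a single map $s : \NN \times \NN \to \QQ$ with $|a_n - s(n,k)| < 2^{-k}$ that is independent of the inner parameter---your remark that these approximations ``genuinely vary with $p$'' misreads the quantifier order; the only dependence is on the outer $p$, which is harmless. Running the deterministic construction on $s$ then yields $t$ and $x = \lim_n t_n$ depending only on $\R{e}$ and $p$, which is exactly what realizing $\some{x \in \RRc}$ demands. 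For $\RRd$ the extracted approximations really do depend on $p'$ and the argument collapses at this very step, as the paper's discussion following the proof spells out; locating the Cauchy/Dedekind asymmetry there, rather than in the output, is the heart of the theorem.
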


\begin{proof}
  For $\R{r} \in \AA$ to realize
  \begin{equation*}
    \all{f \in \RRc^\objN}
    \some{x \in \RRc}
    \all{n \in \objN}
    f(n) \neq x
  \end{equation*}
  it has to satisfy the following condition:
  for all $f \in \carrier{\RRc^\objN}$, $\R{b} \in \Ex{\RRc^\objN}(f)$ and $p \in \PP$
  there is $x \in \carrier{\RRc}$ such that $\all{n \in \NN} f(n) \neq x$ and
  \begin{equation*}
    \some{t \in \QQ^\NN}
    \all{k \in \NN}
    \abs{x - t_k} < 2^{-k}
    \land
    \all{p' \in \PP} (\R{r} \app[p] \R{b}) \app[p'] \numeral{k} \in \Ex{\objQ}(t_k).
  \end{equation*}
  So suppose $f \in \carrier{\RRc^\objN}$, $\R{b} \in \Ex{\RRc^\objN}(f)$ and $p \in \PP$.
  By unraveling the meaning of $\R{b} \in \Ex{\RRc^\objN}(f)$ we find out that
  there is a map $s : \NN \times \NN \to \QQ$, which depends only on~$\R{b}$ and~$p$,
  such that
  \begin{equation*}
    \all{n, k \in \NN}
    \abs{f(n) - s(n, k)} < 2^{-k}
    \land
    \all{p' \in \PP}
    (\R{b} \app[p] \numeral{n}) \app[p'] \numeral{k} \in \Ex{\objQ}(s(n, k)).
  \end{equation*}
  \begin{sloppypar}
    We define sequences $t, u : \NN \to \QQ$ which depend only on~$s$, such that ${0 < u_n - t_n < 2^{-n-1}}$ and $f(n) < u_n \lor t_n < f(n)$, for all $n \in \NN$. Set $t_0 \defeq s(0,0) + 1$ and $u_0 \defeq t_0 + \frac{1}{4}$.
    Assuming $t_n$ and $u_n$ have been constructed, let $d \defeq \frac{u_n - t_n}{4}$, $m \defeq \min \set{m \in \NN \such 2^{-m} < d}$, and define
  \end{sloppypar}
  \begin{equation*}
    (t_{n+1}, u_{n+1}) \defeq
    \begin{cases}
      (t_n, t_n + d) &\text{if $t_n + 2 d < s(n, m)$,} \\
      (u_n - d, u_n) &\text{otherwise.}
    \end{cases}
  \end{equation*}
  %
  %
  The real $x \defeq \lim_n t_n = \lim_n u_n$ satisfies $\all{n \in \NN} f(n) \neq x$ and
  depends only on $\R{b}$ and $p$, because it is defined in terms of~$s$.
  It remains to exhibit~$\R{r} \in \AA$, independent of all parameters, such that $(\R{r} \app[p] \R{b}) \app[p'] \numeral{k} \in \Ex{\objQ}(t_k)$ for all $k \in \NN$ and $p' \in \PP$. It takes the form $\R{r} \defeq \abstr{b} \abstr{k} e$ where~$e$ computes a realizer for~$t_k$, as described in the above procedure, and relying on the fact that $(\R{b} \app[p] \numeral{n}) \app[p'] \numeral{k}$ computes a realizer for~$s(n, k)$.
\end{proof}

Let us investigate where the proof of \cref{thm:cauchy-uncountable} gets stuck if we replace the Cauchy reals with the Dedekind reals.
For $\R{r} \in \AA$ to realize
\begin{equation*}
  \all{f \in \RRd^\objN}
  \some{x \in \RRd}
  \all{n \in \objN}
  f n \neq x
\end{equation*}
it has to satisfy the following condition:
for all $f \in \carrier{\RRd^\objN}$, $\R{b} \in \Ex{\RRd^\objN}(f)$ and $p \in \PP$
there is $x \in \carrier{\RRd}$ such that $\all{n \in \NN} f(n) \neq x$ and
\begin{equation*}
  \all{k \in \NN}
  \all{p' \in \PP}
  \some{t \in \QQ}
  \abs{x - t} < 2^{-k}
  \land
  (\R{r} \app[p] \R{b}) \app[p'] k \in \Ex{\objQ}(t).
\end{equation*}
So suppose $f \in \carrier{\RRd^\objN}$, $\R{b} \in \Ex{\RRd^\objN}(f)$ and $p \in \PP$.
By unraveling the meaning of $\R{b} \in \Ex{\RRd^\objN}(f)$ we find out that
for every $p' \in \PP$ there is a map $s : \NN \times \NN \to \QQ$, which depends on~$\R{b}$ and~$p$ \emph{as well as~$p'$}, such that
\begin{equation*}
  \all{n, k \in \NN}
  \abs{f(n) - s(n, k)} < 2^{-k}
  \land
  (\R{b} \app[p] n) \app[p'] k \in \Ex{\objQ}(s(n, k)).
\end{equation*}
At this point we are stuck: if we continue by constructing a sequence $t : \NN \to \QQ$ as in the proof, its limit $\lim_n t_n$ will depend on~$p'$, but we need a real~$x$ that does not.
For a specific ppca $(\AA, \PP)$ one might find some way of constructing~$x$ that avoids dependence on~$p'$,
although there can be such no general construction as that would contradict~\cref{thm:countable-reals}.

Finally, let us show that in $\PRT{\AA, \PP}$ the carrier sets of Cauchy and Dedekind reals coincide.
Recall that $x \in \RRd$ is \defemph{strongly irrational}\footnote{In intuitionistic mathematics this is a stronger notion than being \defemph{irrational}, which means $x \neq q$ for all~$q \in \objQ$.} when $x < q \lor q < x$ for all $q \in \objQ$.

\begin{lemmaC}
  \label{lem:rat-irrat-cauchy}
  Every rational and every strongly irrational Dedekind real is a Cauchy real.
\end{lemmaC}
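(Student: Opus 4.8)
The plan is to argue constructively in intuitionistic higher-order logic, so that the conclusion holds in $\PRT{\AA,\PP}$ (and any topos) by virtue of the~$\star$. The rational case is trivial: a rational $q$, identified with the Dedekind real $(\set{r \in \objQ \such r < q}, \set{r \in \objQ \such q < r})$, is the limit of the constant sequence $n \mapsto q$, which is rapidly converging since $\all{n \in \objN} |q - q| < 2^{-n}$; hence $q \in \RRc$. All the work is in the strongly irrational case.

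So fix $x \in \RRd$ with $\all{q \in \objQ} x < q \lor q < x$. First I would note that this makes comparison of $x$ with rationals decidable: since $<$ is irreflexive and asymmetric, the disjuncts $r < x$ and $x < r$ exclude one another, so strong irrationality yields $(r < x) \lor \neg(r < x)$ and $(x < r) \lor \neg(x < r)$ for every $r \in \objQ$. Consequently, for rational $r$ and $k \in \objN$ the statement $|x - r| < 2^{-k}$, which unfolds to the conjunction $r - 2^{-k} < x \land x < r + 2^{-k}$, is a conjunction of decidable statements, hence decidable.

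Next I would use the enumeration $\rat{} \colon \objN \to \objQ$. For each $n \in \objN$, \cref{prop:stradle-closelyd}, applied to the cut of $x$ with $k = n$, gives a rational $q$ with $q - 2^{-n} \in L_x$ and $q + 2^{-n} \in U_x$, i.e.\ $q - 2^{-n} < x < q + 2^{-n}$, so $|x - q| < 2^{-n}$; writing $q = \rat{m}$ shows $\some{m \in \objN} |x - \rat{m}| < 2^{-n}$. Since the predicate $m \mapsto (|x - \rat{m}| < 2^{-n})$ is decidable and, as just shown, satisfied by some $m$, minimisation over a decidable predicate — which requires no choice — yields a genuine function $\mu \colon \objN \to \objN$ with $\mu(n) = \min \set{m \such |x - \rat{m}| < 2^{-n}}$. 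Then $q_n \defeq \rat{\mu(n)}$ defines an element of $\objQ^\objN$ with $\all{n \in \objN} |x - q_n| < 2^{-n}$, so $x \in \RRc$ by \cref{def:cauchy-real}.

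The only delicate point, hence the main obstacle, is the absence of countable choice: the approximating sequence must be presented as an honest function of $n$, not merely as a term-by-term existence claim. This is exactly what the ``least witness'' device supplies, converting a uniformly satisfiable decidable family of existentials into a single map; the remaining verifications are routine unwinding of the definitions of the cut of $x$ and of $\RRc$.
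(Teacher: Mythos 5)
Your argument is correct, and it reaches the conclusion by a different mechanism than the paper. You first extract from strong irrationality the decidability of all comparisons of~$x$ with rationals (using asymmetry of~$<$ to turn $x < q \lor q < x$ into $(q < x) \lor \neg(q < x)$), then combine \cref{prop:stradle-closelyd} with bounded minimisation over the decidable predicate $m \mapsto |x - \rat{m}| < 2^{-n}$ to produce the approximating sequence via least witnesses and unique choice. The paper instead runs a bisection: starting from rationals $t_0 < x < u_0$, it tests the midpoint $m = (t_n + u_n)/2$ against~$x$; strong irrationality guarantees $x < m$ or $m < x$, and since these are mutually exclusive the recursion is a genuine function of the previous stage, so no choice is needed. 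Both routes exploit the same underlying fact --- strong irrationality makes rational comparisons deterministic --- but yours makes the decidability explicit and searches through the enumeration $\rat{}$ of~$\objQ$, whereas the paper's stays inside nested intervals and gets the convergence modulus for free from halving. Your version has the minor virtue of producing a canonical (least-index) representative; the paper's is slightly leaner in that it never needs to invoke minimisation or \cref{prop:stradle-closelyd}, only the existence of some rationals below and above~$x$.
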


\begin{proof}
  Clearly, every rational number is a Cauchy real.
  Suppose $x \in \RRd$ is strongly irrational. Then we may find a rational sequence converging rapidly to~$x$ by simple bisection, as follows. There are rational $t_0$ and $u_0$ such that $t_0 < x < u_0$ and $u_0 - t_0 < 1$.
  Given rational $t_n < x < u_n$ such that $u_n - t_n < 2^{-n}$, let $m \defeq \frac{t_n + u_n}{2}$ and define
  \begin{equation*}
    (t_{n+1}, u_{n+1}) =
    \begin{cases}
      (t_n, m) & \text{if $x < m$,} \\
      (m, u_n) & \text{if $m < x$.}
    \end{cases}
  \end{equation*}
  Then both $t$ and $u$ are rapid rational sequences converging to~$x$.
\end{proof}

\begin{proposition}
  \label{prop:carrier-Rd-eq-Rc}%
  In a parameterized realizability topos, $\carrier{\RRd} = \carrier{\RRc}$.
\end{proposition}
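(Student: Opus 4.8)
The plan is to prove the two inclusions $\carrier{\RRc} \subseteq \carrier{\RRd}$ and $\carrier{\RRd} \subseteq \carrier{\RRc}$ using the explicit descriptions from \cref{cor:dedekind-characterization,cor:cauchy-characterization}. The first inclusion is immediate: comparing the two existence predicates, any $\R{r} \in \Ex{\RRc}(x)$ with witnessing sequence $q \in \QQ^\NN$ also lies in $\Ex{\RRd}(x)$, since for a given $p \in \PP$ and $k \in \NN$ we may take the existential witness in $\Ex{\RRd}(x)$ to be $q_k$. Hence $\Ex{\RRc}(x) \neq \emptyset$ implies $\Ex{\RRd}(x) \neq \emptyset$, so $\carrier{\RRc} \subseteq \carrier{\RRd}$.

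For the reverse inclusion, fix $x \in \carrier{\RRd}$ and some $\R{r}_0 \in \Ex{\RRd}(x)$; recall that $\R{r}_0 \app[p] \numeral{k}$ computes a realizer of a rational within $2^{-k}$ of $x$, but this rational may depend on $p$. Reasoning classically in the metatheory, we split on whether $x \in \QQ$. If $x = \rat{n}$ for some $n \in \NN$, then the constant sequence $q_k \defeq x$ witnesses $x \in \carrier{\RRc}$: the realizer $\ucode{\abstr{k} \numeral{n}}$ is uniform and satisfies $\ucode{\abstr{k}\numeral{n}} \app[p] \numeral{k} = \numeral{n} \rz[p] \Ex{\objQ}(x)$ for all $p$ and $k$. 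If $x \notin \QQ$, we aim to realize internally that $x$ is strongly irrational, i.e.\ $\all{q \in \objQ} x < q \lor q < x$, and then invoke \cref{lem:rat-irrat-cauchy}, which is intuitionistically valid and hence applies inside $\PRT{\AA,\PP}$ to conclude $x \in \carrier{\RRc}$.

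To realize strong irrationality of $x$ we describe a uniform realizer which, given a realizer $\numeral{m}$ of a rational $q = \rat{m}$ and a parameter $p$, searches for the least $k$ with $|\rat{\R{r}_0 \app[p] \numeral{k}} - q| > 2^{-k+1}$; the search terminates because $x \neq q$, which is where $x \notin \QQ$ is used. With such a $k$ in hand the sign of $x - q$ is determined, and we emit the corresponding disjunct together with a realizer of the appropriate strict inequality, which carries no information beyond its truth value since $<$ on $\RRd$ is $\neg\neg$-stable by \cref{lem:lt-stable}. Crucially, although the search and the emitted witnesses depend on $p$, the choice of disjunct does not — it is a fact about the real $x$ and the rational $q$ — so when we run the bisection of \cref{lem:rat-irrat-cauchy} on $x$ with initial rational bounds chosen $p$-independently (again legitimate by $\neg\neg$-stability of $<$), the resulting rapidly converging rational sequence is itself $p$-independent, even though the program computing its realizers consults the $p$-dependent approximations $\R{r}_0 \app[p] \numeral{k}$ along the way; this produces a realizer in $\Ex{\RRc}(x)$. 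The main obstacle is precisely this parameter-independence of the constructed Cauchy sequence: it is the same difficulty that blocks the Dedekind analogue of \cref{thm:cauchy-uncountable}, and it is overcome here only because for a single fixed $x$ all the relevant case distinctions are determined by $x$ as an actual real number, hence uniform in $p$.
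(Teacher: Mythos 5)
Your proof is correct and takes essentially the same route as the paper, which simply combines \cref{lem:lt-stable} and \cref{lem:rat-irrat-cauchy} with the external classical dichotomy between rational and irrational elements of the carrier set. You have merely unfolded the realizability details, correctly identifying the key point that the bisection sequence is parameter-independent because each comparison is decided by the fixed real $x$ itself.
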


\begin{proof}
  It suffices to show internally that there is no Dedekind real which is not a Cauchy real.
  Suppose, for contradiction, that~$x$ is such a real. Then by \cref{lem:rat-irrat-cauchy}, $x$ is neither rational nor strongly irrational.
  However, \cref{lem:lt-stable} implies that every irrational real is strongly irrational.
  Thus~$x$ would be a real that is neither rational nor irrational, a contradiction.
\end{proof}

To summarize, the difference between $\RRc$ and $\RRd$ is not one of extent but one of \emph{structure}:
the identity map $x \mapsto x$ is realized as a morphism $\RRc \to \RRd$, but not necessarily as a morphism $\RRd \to \RRc$.


\section{A topos with countable reals}
\label{sec:topos-with-countable}

Given a Miller sequence~$\mil$, as in~\cref{sec:non-diag-sequ}, let~$\MM{\mil} \defeq \invim{\srep}(\mil)$ be the set of all oracles representing~$\mil$ where $\srep$ is the representing map from \cref{sec:oracle-comp-maps}. Define $\TT{\mil} \defeq \PRT{\KK,\MM{\mil}}$ to be the parameterized realizability topos constructed from the ppca $\KK$ with oracles $\MM{\mil}$, see \cref{ex:oracle-ppca}.

\subsection{Countability of the reals}
\label{sec:countability-reals}
Let us immediately address countability of the Dedekind reals in~$\TT{\mil}$. 
We first reduce the problem to countability of the closed interval.

\begin{lemmaC}
  \label{lem:R-contable-iff-I-countable}%
  The real numbers are countable if, and only if, the closed unit interval is countable.
\end{lemmaC}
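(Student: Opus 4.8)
The plan is to prove the two implications separately. In both directions I would first observe that $[0,1]$ and $\RR$ are inhabited, each containing~$0$, so that countability reduces to the existence of a surjection out of~$\NN$ (rather than out of $\one + \NN$), as noted above.

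For the implication from countability of~$\RR$ to countability of $[0,1]$, I would take a surjection $e : \NN \to \RR$ and postcompose it with the \emph{clamping} map $c : \RR \to [0,1]$ defined by $c(x) \defeq \min(1, \max(0, x))$. This is well defined using only the lattice operations on the Dedekind reals, so no case distinction is involved, and $c(x) = x$ holds for every $x \in [0,1]$, which makes $c$ a surjection. Hence $c \circ e : \NN \to [0,1]$ is a surjection and $[0,1]$ is countable.

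For the converse I would write $\RR$ as a union of the intervals $[-n,n]$, $n \in \NN$, each of which is an affine image of $[0,1]$. Concretely, fixing a bijection $\pair{-,-} : \NN \times \NN \to \NN$ and a surjection $g : \NN \to [0,1]$, I would define $h : \NN \to \RR$ by $h(\pair{n,k}) \defeq n \cdot (2\, g(k) - 1)$, and check that $h$ is onto: given $x \in \RR$, inhabitedness of the lower and upper parts of its cut together with the Archimedean property of the rationals yields some $n \geq 1$ with $-n \leq x \leq n$, whence $y \defeq \tfrac{1}{2}(x/n + 1) \in [0,1]$; choosing $k$ with $g(k) = y$ gives $h(\pair{n,k}) = n(2y - 1) = x$.

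I do not anticipate a real obstacle. The one thing to be careful about is to never use decidability of the predicate $x \in [0,1]$, which fails constructively: this is precisely why the first direction employs the \emph{total} clamping map rather than a partial restriction, and why the converse covers $\RR$ by a countable family of total affine reparametrizations of $[0,1]$ instead of trying to invert a single surjection. All the ingredients — the lattice and field arithmetic of Dedekind reals, the Archimedean property, and the stability of surjectivity under composition — are intuitionistically valid, as befits the asterisk on the statement.
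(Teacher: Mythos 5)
Your proof is correct and follows essentially the same route as the paper: the clamping retraction $x \mapsto \max(0,\min(1,x))$ for one direction, and the surjection $\pair{n,k} \mapsto n\cdot(2\,g(k)-1)$ together with the Archimedean bound $-n < x < n$ for the other. The extra care you take about inhabitedness and about avoiding decidability of $x \in [0,1]$ is sound but not needed beyond what the paper already does implicitly.
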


\begin{proof}
  If $\RR$ is countable then $[0,1]$ is countable because there is a retraction $\RR \to [0,1]$, for instance
  $x \mapsto \max(0, \min(1, x))$.
  Conversely, given a surjection $e : \NN \to [0,1]$, we claim that $e' : \NN \to \RR$, defined by
  $e'(\pair{m, n}) \defeq m \cdot (2 \cdot e(n) - 1)$, is a surjection also. For any $x \in \RR$ there is $m \in \NN$ such that
  $-m < x < m$, and there is $n \in \NN$ such that $e(n) = \frac{x + m}{2 m}$, hence $e'(\pair{m, n}) = x$.
  %
\end{proof}

Next, we obtain custom descriptions of the assemblies of natural numbers and the closed unit interval.

\begin{lemma}
  \label{lem:nno-assembly}%
  In the topos $\TT{\mil}$, the natural numbers object is isomorphic to the assembly~$\objN$ with carrier
  $\carrier{\objN} \defeq \NN$ and the existence predicate
  $\Ex{\objN}(n) \defeq \set{n}$.
\end{lemma}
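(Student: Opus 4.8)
The plan is to exhibit an isomorphism, in the category $\PAsm{\KK, \MM{\mil}}$, between the assembly described in the statement — call it $\objN'$ for this sketch, with carrier $\NN$ and $\Ex{\objN'}(n) = \set{n}$ — and the natural numbers object as already identified in \cref{sec:natur-numb-integ}, whose existence predicate is $n \mapsto \set{\numeral{n}}$ with Curry numerals; call that one $\objN_0$. Since any object isomorphic to a natural numbers object is again one, and since the full and faithful embedding of assemblies into $\TT{\mil}$ preserves and reflects isomorphisms, this suffices. First note that $\objN'$ is a bona fide assembly, as $\set{n} \neq \emptyset$ for every $n$.

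The isomorphism is carried in both directions by the identity map on $\NN$. To see that $\id[\NN] : \objN' \to \objN_0$ is an assembly map we must produce $\R{a} \in \KK$ with $\R{a} \, n \rz[\alpha] \Ex{\objN_0}(n)$ for all $n \in \NN$ and all $\alpha \in \MM{\mil}$, which unfolds to $\alpha \at \R{a} \, n = \numeral{n}$. The realizer $\combCur$ of \cref{ex:numers-vs-numerals} does exactly this: $\alpha \at \combCur \, n = \alpha \at \numeral{n} = \numeral{n}$, the Curry numeral being a constant of $\AA$. Symmetrically, $\id[\NN] : \objN_0 \to \objN'$ is realized by $\combNum$, since $\alpha \at \combNum \, \numeral{n} = \alpha \at n = n \in \set{n} = \Ex{\objN'}(n)$. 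Both composites are the identity map $\NN \to \NN$, which is realized (e.g. by $\comb{id}$), hence they coincide with the identity morphisms; so the two assembly maps are mutually inverse.

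I expect essentially no obstacle here: the whole content sits in \cref{ex:numers-vs-numerals}. The only point worth stressing is that the indices built there via the relativized $s$-$m$-$n$ and recursion theorems are chosen independently of the oracle, so $\combNum$ and $\combCur$ are genuine single realizers valid at every parameter $\alpha \in \MM{\mil}$ — which is exactly what a parameterized assembly map requires, and the reason the passage to the simpler existence predicate $n \mapsto \set{n}$ is available in $\TT{\mil}$ in the first place.
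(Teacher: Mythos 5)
Your proposal is correct and is exactly the paper's argument: the paper's proof likewise identifies the standard natural numbers object as the assembly with existence predicate $n \mapsto \set{\numeral{n}}$ and invokes the combinators $\combNum$ and $\combCur$ from \cref{ex:numers-vs-numerals} to convert between the two codings; you have merely spelled out the routine verification that these realize mutually inverse identity maps.
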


\begin{proof}
  In \cref{sec:natur-numb-integ} we saw that the natural numbers object is the assembly with carrier
  $\NN$ and existence predicate $n \mapsto \set{\numeral{n}}$. The assembly from the statement is isomorphic
  to it because we may convert between $\numeral{n}$ and $n$ using the combinators~$\combNum$ and~$\combCur$ from \cref{ex:numers-vs-numerals}.
\end{proof}

Henceforth we use~$\objN$ from \cref{lem:nno-assembly} as the standard natural numbers object. The practical consequence is that we may eschew Curry numerals and instead use numbers directly.

\begin{lemma}
  \label{lem:interval-assembly}%
  In the topos $\TT{\mil}$, the closed unit interval is isomorphic to the assembly~$\objI$ with carrier
  $\carrier{\objI} \defeq [0,1] \cap \carrier{\RRd}$ and the existence predicate
  $\Ex{\objI}(x) \defeq \set{m \in \KK \such \mil(m) = x}$.
\end{lemma}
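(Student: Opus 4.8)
The plan is to produce an isomorphism in $\TT{\mil}$ between $\objI$ and the closed unit interval, realized as the sub-assembly $\objI' \defeq \set{x \of \RRd \such 0 \le x \le 1}$ of the object of Dedekind reals computed in \cref{cor:dedekind-characterization}, working with the natural numbers object $\objN$ of \cref{lem:nno-assembly} so that numbers realize themselves. Both halves of the isomorphism will be the identity on underlying sets, so the strategy reduces to three checks: that the underlying sets agree, that the identity $\objI \to \objI'$ is realized, and that the identity $\objI' \to \objI$ is realized. The first check is easy: the order $\le$ on $\RRd$ is $\neg\neg$-stable (\cref{lem:lt-stable}) and holds externally whenever $0 \le x \le 1$, so a realizer of $0 \le x \le 1$ may be taken trivially; consequently $\carrier{\objI'} = \set{x \in \carrier{\RRd} \such 0 \le x \le 1} = [0,1] \cap \carrier{\RRd} = \carrier{\objI}$, and a realizer of $x$ in $\objI'$ is, up to a harmless pairing, just a realizer of $x$ in $\RRd$.

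For the identity $\objI \to \objI'$: given $m \in \Ex{\objI}(x)$, so $\mil(m) = x$, and any oracle $\alpha \in \MM{\mil} = \invim{\srep}(\mil)$, we have $\srep(\alpha)(m) = \mil(m) = x$. By the discussion in \cref{sec:oracle-comp-maps} there is a fixed $\R{v} \in \NN$, independent of the oracle, with $\pr[\alpha]{\pr[\alpha]{\R{v}}(n)}$ representing $\srep(\alpha)(n)$ for all $\alpha$ and $n$; hence $\pr[\alpha]{\R{v}}(m)$ is an index of a map representing $x$, which after the routine translation into the shape of $\Ex{\RRd}$ in \cref{cor:dedekind-characterization} is exactly a realizer of $x \in \RRd$ (and of $x \in \objI'$). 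Thus a combinator built uniformly from $\R{v}$ realizes this map.

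The content of the lemma is in the reverse direction $\objI' \to \objI$, and this is where the Miller property of~$\mil$ is essential. Take any $\R{r} \in \Ex{\objI'}(x)$, so in particular $\R{r} \in \Ex{\RRd}(x)$ with $x \in [0,1]$. Unwinding \cref{cor:dedekind-characterization}, with numbers in place of Curry numerals via \cref{lem:nno-assembly}, this says that for every $\alpha \in \MM{\mil}$ and every $k \in \NN$ the value $\pr[\alpha]{\R{r}}(k) = \R{r} \app[\alpha] k$ is defined and is the $\rat{}$-code of a rational within $2^{-k}$ of $x$; that is, $\pr[\alpha]{\R{r}}$ represents $x$ for every $\alpha \in \invim{\srep}(\mil)$. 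By \cref{def:sequence-computable} this means $\R{r}$ is an $\mil$-index for $x$, so $\rcomp{\mil}{\R{r}} = x$, and then \cref{thm:miller-sequence} forces $\mil(\R{r}) = \rcomp{\mil}{\R{r}} = x$, i.e.\ $\R{r} \in \Ex{\objI}(x)$. Hence the identity $\objI' \to \objI$ is realized by $\comb{id}$. Since both maps are the identity on carriers they are mutually inverse, and the isomorphism follows. The only non-routine step is this last one: recognizing that, in the ppca $\KK$ with parameters $\MM{\mil}$, a topos-level realizer of a real in $[0,1]$ is nothing but an oracle-machine index that computes that real uniformly over all oracles representing $\mil$ — exactly Miller's notion of an $\mil$-index — whence non-diagonalizability pins the real down inside the sequence.
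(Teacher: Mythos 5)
Your proposal is correct and follows essentially the same route as the paper: reduce to the sub-assembly of $\RRd$ cut out by $0 \leq x \leq 1$ (whose existence predicate is equivalent to $\Ex{\RRd}$ by $\neg\neg$-stability), realize one direction with the realizer $\R{v}$ from \cref{sec:oracle-comp-maps}, and for the other direction observe that a realizer in $\Ex{\RRd}(x)$ is precisely a $\mil$-index, so \cref{thm:miller-sequence} forces $\mil(\R{r}) = x$. This matches the paper's argument step for step.
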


\begin{proof}
  The sub-assembly $\set{x \in \RRd \such 0 \leq x \land x \leq 1}$ has $[0,1] \cap \carrier{\RRd}$ as its carrier set. Its existence predicate is the tripos predicate
  $[x \of \RRd \such 0 \leq x \land x \leq 1]$,
  which is $\neg\neg$-stable, and therefore equivalent to $\Ex{\RRd}(x)$ restricted to~$\carrier{\objI}$.
  Thus, it suffices to show that the tripos logic validates
  \begin{equation}
    \label{eq:objI-ER-to-EI}
    x \of \carrier{\objI} \validates \Ex{\RRd}(x) \to \Ex{\objI}(x).
  \end{equation}
  and
  \begin{equation}
    \label{eq:objI-EI-to-ER}
    x \of \carrier{\objI} \validates \Ex{\objI}(x) \to \Ex{\RRd}(x).
  \end{equation}
  By \cref{cor:dedekind-characterization}, $\R{r} \in \Ex{\RRd}(x)$ is equivalent to
  \begin{equation}
    \label{eq:objI-rz-x}
    \all{\alpha \in \MM{\mil}}
    \all{k \in \NN}
    \abs{x - \rat{\pr[\alpha]{\R{r}}(k)}} < 2^{-k},
  \end{equation}
  where we used $\objN$ from \cref{lem:nno-assembly}.
  Condition \eqref{eq:objI-rz-x} states that~$\R{r}$ is a $\mil$-index for~$x$ in the sense of \cref{def:sequence-computable}, therefore $\mil(\R{r}) = x$ and we may realize \eqref{eq:objI-ER-to-EI} with $\ucode{\abstr{r} r}$.

  It remains to realize~\eqref{eq:objI-EI-to-ER}, which is accomplished by the realizer $\wrep \in \NN$, as characterized by~\eqref{eq:wrep}.
  If~$\alpha$ codes $\mil$ then $\pr[\alpha]{\wrep}(m)$ represents $\mil(m)$ for all $m \in \NN$,
  therefore~$\wrep$ realizes~\eqref{eq:objI-EI-to-ER}.
\end{proof}

\begin{theorem}
  \label{thm:countable-reals}
  In the topos $\TT{\mil}$, there is an epimorphism from natural numbers to Dedekind reals.
\end{theorem}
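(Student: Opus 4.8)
The plan is to route everything through \cref{lem:R-contable-iff-I-countable} and then to observe that, once the assembly $\objI$ has been computed, a Miller sequence \emph{is} the cover we want, essentially by definition. Since \cref{lem:R-contable-iff-I-countable} is intuitionistically valid it holds internally to $\TT{\mil}$, so it suffices to produce an epimorphism $\objN \to \objI$, where $\objN$ and $\objI$ are the explicit assemblies of \cref{lem:nno-assembly} and \cref{lem:interval-assembly}; then \cref{lem:R-contable-iff-I-countable} upgrades this to the desired epimorphism $\objN \to \RRd$. The two existence predicates in play are $\Ex{\objN}(n) = \set{n}$ and, crucially, $\Ex{\objI}(x) = \set{m \in \KK \such \mil(m) = x}$. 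The second formula is the whole point: a realizer of a real $x \in [0,1]$ in $\TT{\mil}$ is \emph{literally} an index at which $\mil$ takes the value $x$ --- this is precisely the Miller property of $\mil$ in action, via \cref{thm:miller-sequence} and \cref{lem:interval-assembly}.

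With this in hand I would first check that $\mil$, corestricted to $\carrier{\objI}$, is a morphism $\objN \to \objI$. It is well defined because each value $\mil(n)$ is $\mil$-computable: the oracle-independent conversion $\R{v}$ of \cref{sec:oracle-comp-maps} supplies a $\mil$-index for $\mil(n)$, so $\mil(n) \in [0,1] \cap \carrier{\RRd} = \carrier{\objI}$. And it is realized by the identity combinator $\ucode{\abstr{x} x}$: a realizer of $n \in \objN$ is the number $n$ itself, $\ucode{\abstr{x}x} \app[\alpha] n = n$ for every $\alpha \in \MM{\mil}$, and $n \in \Ex{\objI}(\mil(n))$ simply because $\mil(n) = \mil(n)$.

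Finally I would show this morphism is a regular epimorphism using the standard cover criterion for assembly maps (see~\cite{oosten08:_realiz}): one must exhibit $\R{e} \in \KK$ such that for every $x \in \carrier{\objI}$, every realizer $\R{c} \in \Ex{\objI}(x)$ and every $\alpha \in \MM{\mil}$, the value $\R{e} \app[\alpha] \R{c}$ realizes some $n \in \carrier{\objN}$ with $\mil(n) = x$. But by the formula for $\Ex{\objI}$ the realizer $\R{c}$ \emph{is} a number with $\mil(\R{c}) = x$, so $\R{e} \defeq \ucode{\abstr{x} x}$ and $n \defeq \R{c}$ work at once: $\R{e} \app[\alpha] \R{c} = \R{c} = n \in \Ex{\objN}(n)$ and $\mil(n) = x$. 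Hence $\mil$ is a cover, in particular an epimorphism in $\TT{\mil}$, which completes the argument. I do not expect a genuine obstacle at this stage: the only points needing care --- well-definedness of $\mil$ as a morphism and recalling the cover criterion --- are routine, and all the real difficulty has already been absorbed into \cref{lem:interval-assembly}, whose identification of $\Ex{\objI}$ rests on Miller's non-diagonalizability theorem. It is exactly that non-diagonalizability which forces every realizer of a real in $[0,1]$ to be an index \emph{occurring in} $\mil$; without it the cover condition would fail.
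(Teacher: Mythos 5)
Your proposal is correct and follows essentially the same route as the paper: reduce to the unit interval via \cref{lem:R-contable-iff-I-countable}, use the explicit assemblies of \cref{lem:nno-assembly,lem:interval-assembly}, and observe that $\mil:\objN\to\objI$ is realized by the identity combinator and that the surjectivity statement $\all{x \in \objI}\some{m \in \objN}\mil(m)=x$ is realized by the identity as well, since a realizer of $x$ in $\objI$ is by definition an index $m$ with $\mil(m)=x$. Your extra remarks on well-definedness (via $\R{v}$) and on the cover criterion merely spell out what the paper leaves implicit after \cref{lem:interval-assembly}.
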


\begin{proof}
  By \cref{lem:R-contable-iff-I-countable,lem:interval-assembly} it suffices to show that the Miller sequence $\mil : \objN \to \objI$, which is realized by $\ucode{\abstr{m} m}$, is an epimorphism. This is so because
  \begin{equation*}
    \all{x \in \objI} \some{m \in \objN} \mil(m) = x
  \end{equation*}
  is trivially realized by $\ucode{\abstr{m}{m}}$ as well.
  %
  %
\end{proof}

\subsection{What else is countable?}
\label{sec:what-else-countable}
Given that \cref{thm:fixed-point-R-uncountable,thm:countable-reals,thm:cauchy-uncountable} sandwich the countable Dedekind reals between uncountable Cauchy reals and uncountable MacNeille reals, it is natural to wonder which classically uncountable spaces are countable in~$\TT{\mil}$.

Products, sums and images of countable sets are countable, which gives basic examples of countable spaces, such as Euclidean spaces $\RRd^n$, hypercubes~$\objI^n$, the unit circle $T \defeq \set{(x,y) \in \RRd \times \RRd \such x^2 + y^2 = 1}$, $n$-spheres, etc.

William F.~Lawvere's fixed-point theorem~\cite{lawvere69} is a source of uncountable sets.

\begin{theoremC}[Lawvere]
  \label{thm:lawvere}
  If $e : A \to B^A$ is surjective then every $f : B \to B$ has a fixed point.
\end{theoremC}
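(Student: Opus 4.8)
The plan is to prove this in intuitionistic higher-order logic, so that the result holds in any topos and earns its asterisk. Recall that a morphism is an epimorphism in a topos exactly when it is surjective in the internal logic, so surjectivity of $e$ unfolds to $\all{\phi \in B^A} \some{a \in A} e(a) = \phi$. I would take this internal surjectivity as the hypothesis and aim to derive $\some{b \in B} f(b) = b$.

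The argument is the classical diagonalization. Given $f : B \to B$, form the morphism $g : A \to B$ defined by $g(a) \defeq f(e(a)(a))$; this is legitimate by cartesian closedness, since evaluation $B^A \times A \to B$ is a morphism and we postcompose with $f$. Applying surjectivity of $e$ to $g$ yields $a_0 \in A$ with $e(a_0) = g$. Evaluating both sides at $a_0$ gives
\[
  e(a_0)(a_0) = g(a_0) = f\bigl(e(a_0)(a_0)\bigr),
\]
so $b \defeq e(a_0)(a_0)$ is a fixed point of $f$, which is exactly $\some{b \in B} f(b) = b$.

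Every step is intuitionistically valid: the sole appeal to an existential is the extraction of $a_0$ from surjectivity of $e$, and since the conclusion is itself existential the whole derivation happens under a single existential quantifier, so no choice principle is invoked. Accordingly I expect no genuine obstacle here; the only point deserving a sentence of care is the passage between the external notion of surjectivity (epimorphism) and its internal counterpart, which is the standard characterization of epimorphisms in a topos.
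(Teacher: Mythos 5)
Your proof is correct and is essentially the paper's own argument: both apply surjectivity of $e$ to the map $a \mapsto f(e(a)(a))$ and evaluate the resulting preimage at itself to produce the fixed point. The extra remarks on cartesian closedness and the intuitionistic validity of extracting the witness are accurate but not a different route.
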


\begin{proof}
  Because $e$ is surjective,
  there is $a \in A$ such that $e(a) = (x \mapsto f(e(x)(x)))$, whence $e(a)(a) = f(e(a)(a))$.
\end{proof}

As soon as there is a fixed-point free map $X \to X$, there is no surjection ${\objN \to X^\objN}$, by the contrapositive of Lawvere's theorem. We already noted in \cref{cor:cantor-diagonal} this to be the case for Cantor space.
Two further examples are the countable powers $\RRd^\objN$ and $T^\objN$ of the Dedekind reals and the unit circle, which are uncountable because (non-trivial) translations of~$\RRd$ and rotations of~$T$ have no fixed points.

How about the Hilbert cube $\objI^\objN$?
One might attempt to enumerate it by composing $\mil : \objN \to \objI$ with a space-filling curve $\objI \to \objI^\objN$. However, even constructing just a square-filling curve $\objI \to \objI \times \objI$ in~$\TT{\mil}$ seems impossible,\footnote{One cannot intuitionistically construct a square-filling curve $[0,1] \to [0,1] \times [0,1]$ because there is no such curve in the topos of sheaves on the closed unit square. We do not know whether there is a square-filling curve in~$\TT{\mil}$.}
 so we take another route.
We first need a lemma showing that the elements of $\objI^\objN$ take a special form.

\begin{lemma}
  \label{lem:flattening-realizers}
  There is a total computable function $\ell : \NN \times \NN \to \NN$ such that $\mil(\ell(m,n)) = f(n)$ for all $f \in \carrier{\objI^\objN}$, $m \in \Ex{\objI^\objN}(f)$, and $n \in \NN$.
\end{lemma}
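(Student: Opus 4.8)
The plan is to make the realizers of $\objI^\objN$ explicit, observe that they yield oracle-dependent data, and then pass that data through the oracle-to-representation converter $\R{v}$ of \cref{sec:oracle-comp-maps} so as to recover a genuine $\mil$-index, to which \cref{thm:miller-sequence} applies.

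First I would unwind $\Ex{\objI^\objN}(f)$. Taking the natural numbers object $\objN$ from \cref{lem:nno-assembly}, so that $\Ex{\objN}(n) = \set{n}$, and using the formula for exponential existence predicates from \cref{sec:prod-expon}, a number $m$ realizes $f \in \carrier{\objI^\objN}$ exactly when, for every $n \in \NN$ and every $\alpha \in \MM{\mil}$, the value $\pr[\alpha]{m}(n)$ is defined and lies in $\Ex{\objI}(f(n))$, i.e.\ $\mil(\pr[\alpha]{m}(n)) = f(n)$. Thus $m$ supplies, for each oracle $\alpha$ representing $\mil$, a \emph{position} $\pr[\alpha]{m}(n)$ in the Miller sequence whose value is $f(n)$. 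The main obstacle is that this position depends on $\alpha$ and is a priori merely some index $j$ with $\mil(j) = f(n)$; there is no reason for it to be an $\mil$-index for $f(n)$ in the sense of \cref{def:sequence-computable}, so one cannot read a representation of $f(n)$ off it directly, let alone one that is the same real for every $\alpha \in \MM{\mil}$.

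To get around this I would recall the index $\R{v} \in \NN$ of \cref{sec:oracle-comp-maps}, for which $\pr[\alpha]{\pr[\alpha]{\R{v}}(j)}$ is total and represents $\srep(\alpha)(j)$ for all $\alpha \in \Cantor$ and $j \in \NN$, and then apply the relativized smn theorem to obtain a total computable $\ell : \NN \times \NN \to \NN$ with
\begin{equation*}
  \pr[\alpha]{\ell(m,n)}(k) \simeq \pr[\alpha]{\pr[\alpha]{\R{v}}(\pr[\alpha]{m}(n))}(k)
  \qquad \text{for all } \alpha, m, n, k .
\end{equation*}
In words, $\ell(m,n)$ codes the machine that, on oracle $\alpha$ and input $k$, computes $j \defeq \pr[\alpha]{m}(n)$, then $i \defeq \pr[\alpha]{\R{v}}(j)$, then outputs $\pr[\alpha]{i}(k)$. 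The code $\ell(m,n)$ itself does not mention $\alpha$, and $\ell$ is total whether or not $m$ happens to be a realizer.

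Finally I would verify the conclusion for $m \in \Ex{\objI^\objN}(f)$. Fixing such an $m$, an $n \in \NN$, and any $\alpha \in \MM{\mil}$, the first stage of the computation gives $j \defeq \pr[\alpha]{m}(n)$ with $\mil(j) = f(n)$; since $\srep(\alpha) = \mil$, the defining property of $\R{v}$ then says that $\pr[\alpha]{\ell(m,n)}$ is total and represents $\srep(\alpha)(j) = \mil(j) = f(n)$. As this holds for every $\alpha \in \MM{\mil}$, with the represented real always equal to $f(n)$, the number $\ell(m,n)$ is an $\mil$-index for $f(n)$, so \cref{thm:miller-sequence} yields $\mil(\ell(m,n)) = \rcomp{\mil}{\ell(m,n)} = f(n)$, as required. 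The crux of the argument is precisely the laundering step that removes the oracle dependence and produces an honest $\mil$-index; everything else is bookkeeping with the exponential existence predicate.
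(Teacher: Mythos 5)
Your proposal is correct and follows essentially the same route as the paper: the paper defines $\ell(m,n) \defeq \ucode{\abstr{x}\,\R{v}\,(m\,n)\,x}$ (using combinatory abstraction and \cref{lem:abstr-uniform} where you invoke the relativized smn theorem, but the resulting machine is the same), and likewise concludes by checking that $\pr[\alpha]{\ell(m,n)}$ represents $f(n)$ uniformly over $\alpha\in\MM{\mil}$, so that $\ell(m,n)$ is an $\mil$-index and \cref{thm:miller-sequence} gives $\mil(\ell(m,n)) = \rcomp{\mil}{\ell(m,n)} = f(n)$. You have also correctly isolated the key subtlety — that $\pr[\alpha]{m}(n)$ is merely some $\alpha$-dependent position with $\mil(\pr[\alpha]{m}(n)) = f(n)$, not an $\mil$-index, and must be laundered through $\R{v}$.
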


\begin{proof}
  In \cref{sec:oracle-comp-maps} we obtained $\wrep \in \NN$, as characterized by \eqref{eq:wrep}, such that $\pr[\alpha]{\wrep}(n)$ is the code of a map representing~$\mil(n)$ for all~$\alpha \in \MM{\mil}$ and~$n \in \NN$. The map $\ell : \NN \times \NN \to \NN$,
  \begin{equation*}
    \ell(m, n) \defeq \ucode{\abstr{x} \wrep \, (m \, n) \, x},
  \end{equation*}
  is well-defined by \cref{lem:abstr-uniform} and is computable.

  Now consider any $f \in \carrier{\objI^\objN}$ and $m \in \Ex{\objI^\objN}(f)$.
  Given any $n \in \NN$, we establish $\mil(\ell(m, n)) = f(n)$ by verifying that $\rcomp{\mil}{\ell(m,n)} = f(n)$.
  For any $\alpha \in \MM{\mil}$ and $k \in \NN$,
  \begin{align*}
    \pr[\alpha]{\ell(m,n)}(k)
    &\kleq \alpha \at (\abstr{x} \wrep \, (m \, n) \, x) \, k \\
    &\kleq \alpha \at \wrep \, (m \, n) \, k \\
    &\kleq
         \pr[\alpha]{
           \pr[\alpha]{\wrep}(
             \pr[\alpha]{m}(n)
           )
         }(k),
  \end{align*}
  therefore $\pr[\alpha]{\ell(m,n)} = \pr[\alpha]{\pr[\alpha]{\wrep}(\pr[\alpha]{m}(n))}$, which is a representation
  of $f(n)$.
\end{proof}

A curious consequence of \cref{lem:flattening-realizers} is that every $f \in \carrier{\objI^\objN}$ is equal to $\mil\circ g$ for some total $g : \NN \to \NN$ that is computable without an oracle.

\begin{theorem}
  \label{thm:hilbert-countable}%
  In the topos $\TT{\mil}$, the Hilbert cube~$\objI^\objN$ is countable.
\end{theorem}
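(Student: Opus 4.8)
The plan is to produce an explicit epimorphism $\objN \to \objI^\objN$, in close parallel with the proof of \cref{thm:countable-reals}, the key ingredient being the flattening function $\ell$ of \cref{lem:flattening-realizers}. Since $\ell : \NN \times \NN \to \NN$ is \emph{total} computable and $\mil : \NN \to [0,1]$ is total, the map
\begin{equation*}
  \Phi : \NN \to \carrier{\objI}^{\NN},
  \qquad
  \Phi(m)(n) \defeq \mil(\ell(m, n)),
\end{equation*}
is genuinely total; each value $\mil(\ell(m,n))$ lies in $\carrier{\objI}$ because $\ell(m,n)$ witnesses $\ell(m,n) \in \Ex{\objI}(\mil(\ell(m,n)))$ (see \cref{lem:interval-assembly}).

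First I would check that $\Phi$ is an assembly map $\objN \to \objI^\objN$, using the standard natural numbers object of \cref{lem:nno-assembly}, so that a number realizes itself. Since $n \mapsto \ell(m, n)$ is computable uniformly in~$m$ and without an oracle, the $s$-$m$-$n$ theorem yields a realizer $\R{a} \in \KK$, independent of the parameter, such that $\R{a} \app[\alpha] m$ is an index of $n \mapsto \ell(m, n)$ for every $\alpha \in \MM{\mil}$. Then for all $m, n$ and all $\alpha \in \MM{\mil}$ we get $(\R{a} \app[\alpha] m) \app[\alpha] n = \ell(m,n) \in \Ex{\objI}(\Phi(m)(n))$, hence $\R{a} \app[\alpha] m \in \Ex{\objI^\objN}(\Phi(m))$; in particular $\Phi(m) \in \carrier{\objI^\objN}$ for every~$m$, so $\Phi$ is well-defined and $\R{a}$ realizes it.

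The crucial point is that \cref{lem:flattening-realizers} is precisely the assertion that $\Phi(\R{b}) = g$ whenever $\R{b} \in \Ex{\objI^\objN}(g)$, since then $\Phi(\R{b})(n) = \mil(\ell(\R{b}, n)) = g(n)$ for every~$n$. Thus a realizer of $g$ in $\objI^\objN$ doubles as a natural number~$m$ with $\Phi(m) = g$. Unwinding the topos interpretation of $\forall\exists$ as in \cref{example:topos-forall-exists}, the statement $\all{g \in \objI^\objN} \some{m \in \objN} \Phi(m) = g$ is then realized by $\ucode{\abstr{b} b}$, exactly as $\mil : \objN \to \objI$ is: given $g \in \carrier{\objI^\objN}$, $\R{b} \in \Ex{\objI^\objN}(g)$ and $\alpha \in \MM{\mil}$, take $m \defeq \R{b} \in \carrier{\objN}$ as the existential witness; since $\Phi(\R{b}) = g$, the equality predicate $(\Phi(\R{b}) \eq[\objI^\objN] g)$ coincides with $\Ex{\objI^\objN}(g)$, which contains $\alpha \at \ucode{\abstr{b}b}\,\R{b} = \R{b}$. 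Hence $\Phi$ is an epimorphism, and since $\objI^\objN$ is inhabited (being a quotient of the inhabited object $\objN$) this makes $\objI^\objN$ countable.

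I expect the one genuinely load-bearing idea — and the one worth stating explicitly — to be the use of \emph{totality} of $\ell$: it is what permits $\Phi$ to be defined on all of $\NN$ with no partiality whatsoever, so that the usual obstruction of ``deciding which indices are total'' never arises, and every realizer of a member of $\objI^\objN$ is itself an index hitting that member. The remaining checks (that the displayed maps and realizers satisfy the assembly and tripos conditions, and that the realizers are parameter-independent, which is automatic since they are oracle-free) are routine applications of the machinery of \cref{sec:unif-assemblies,sec:topos-with-countable}.
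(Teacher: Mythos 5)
Your proposal is correct and follows essentially the same route as the paper: the same map $m \mapsto (n \mapsto \mil(\ell(m,n)))$, the same appeal to \cref{lem:flattening-realizers} to conclude that any realizer of $g \in \objI^\objN$ is itself a preimage, and the same identity realizer for the $\forall\exists$ statement witnessing the epimorphism. The only cosmetic difference is that you produce the realizer of the map via the s-m-n theorem where the paper writes it as an explicit abstraction over a realizer of~$\ell$.
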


\begin{proof}
  Let $\ell : \NN \times \NN \to \NN$ be as in Lemma~\ref{lem:flattening-realizers} and
  $\R{l} \in \NN$ a realizer for~$\ell$, which exists because~$\ell$ is computable.
  The map $e : \carrier{\objN} \to \carrier{\objI^\objN}$, defined by $e(m)(n) \defeq \mil(\ell(m,n))$, is realized by
  $\ucode{\abstr{m} \abstr{n} \R{l} \, (\combPair \, m \, n)}$.
  To show that $e$ is an epimorphism it suffices to prove that
  \begin{equation*}
    \all{f \in \objI^\objN}
    \some{m \in \objN}
    \all{n \in \objN}
    \mil(\ell(m,n)) = f(n)
  \end{equation*}
  is realized by~$\ucode{\abstr{x} x}$.
  By unfolding the realizability interpretation we find that this amounts to
  \begin{equation*}
    \all{\alpha \in \MM{\mil}}
    \all{\R{b} \in \Ex{\objI^\objN}(f)}
    \all{n \in \NN}
    \R{b} \, n \rz[\alpha] \mil(\ell(\R{b},n)) = f(n).
  \end{equation*}
  This is indeed true by \cref{lem:flattening-realizers} and the fact that~$\R{b}$ realizes~$f$.
\end{proof}

\section{\texorpdfstring{Mathematics in the topos~$\TT{\mil}$}{Mathematics in the topos Tμ}}
\label{sec:analysis-topos-tt}

We devote the last section to exploring a little further the peculiar new mathematical world~$\TT{\mil}$.

\subsection{Brouwer's fixed-point theorem}
\label{sec:brouwers-fixed-point}
The reader may have noticed already that having a surjection $\objN \to \objI^\objN$ is precisely the antecedent of Lawvere's theorem, which we can use to give a short proof of Brouwer's fixed-point theorem.

\begin{theorem}[Brouwer's fixed-point theorem]
  \label{thm:internal-brouwer}%
  In the topos $\TT{\mil}$, every map $\objI^\objN \to \objI^\objN$ has a fixed point,
  and so does every map $\objI^n \to \objI^n$, for every $n \in \NN$.
\end{theorem}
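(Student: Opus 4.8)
The plan is to deduce both statements from Lawvere's fixed-point theorem (\cref{thm:lawvere}), exactly as the remark preceding the statement suggests, feeding in the countability of the Hilbert cube (\cref{thm:hilbert-countable}).

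First I would record that countability of $\objI^\objN$ propagates to the relevant exponentials via the cartesian-closed structure. We have $(\objI^\objN)^\objN \cong \objI^{\objN \times \objN}$, and since $\objN \times \objN \cong \objN$ in any topos, this gives $(\objI^\objN)^\objN \cong \objI^\objN$. Likewise, for $n \geq 1$ the bijection $\set{0, \ldots, n-1} \times \objN \cong \objN$ yields $(\objI^n)^\objN \cong \objI^{\set{0,\ldots,n-1} \times \objN} \cong \objI^\objN$; and for $n = 0$ we have $\objI^0 \cong \one$, where the statement is trivial since the unique point is fixed. (Alternatively: $\objI^n$ is a retract of $\objI^\objN$ — project onto the first $n$ coordinates, pad with the rational $0$ — so $(\objI^n)^\objN$ is a retract of $(\objI^\objN)^\objN$, and a retract of a countable inhabited object is countable.) By \cref{thm:hilbert-countable}, each of $(\objI^\objN)^\objN$ and $(\objI^n)^\objN$ is therefore countable, and, being inhabited, admits an epimorphism from $\objN$, which in the internal logic is the same thing as a surjection.

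Now I would apply \cref{thm:lawvere} twice. With $A \defeq \objN$ and $B \defeq \objI^\objN$, the surjection $\objN \to (\objI^\objN)^\objN = B^A$ forces every map $\objI^\objN \to \objI^\objN$ to have a fixed point. With $A \defeq \objN$ and $B \defeq \objI^n$, the surjection $\objN \to (\objI^n)^\objN = B^A$ forces every map $\objI^n \to \objI^n$ to have a fixed point. That finishes the argument.

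I do not expect any real obstacle here: all the analytic and computational substance is already packaged into \cref{thm:hilbert-countable}, and what remains is a formal manipulation of exponentials in a cartesian-closed category followed by a one-line invocation of Lawvere's theorem. The only point deserving a moment's care is that ``countable'' is meant in its internal sense and that an epimorphism of the topos supplies exactly the surjectivity hypothesis of \cref{thm:lawvere}; both are immediate from the discussion in \cref{sec:countabe-set-intuit} together with the fact that epimorphisms in a topos coincide with the internally surjective maps.
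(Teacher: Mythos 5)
Your argument is correct and is essentially the paper's own proof: both reduce to Lawvere's theorem by using the bijections $\objN \cong \objN \times \objN$ and $\objN \cong \set{1,\ldots,n} \times \objN$ to identify $(\objI^\objN)^\objN$ and $(\objI^n)^\objN$ with $\objI^\objN$, then compose with the epimorphism $\objN \to \objI^\objN$ from \cref{thm:hilbert-countable}. The $n=0$ case is handled trivially in both.
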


\begin{proof}
  Combining \cref{thm:hilbert-countable} and Lawvere's \cref{thm:lawvere} yields a fixed point of any map $f : \objI \to \objI$. When $n = 0$ the statement is trivial. For the remaining cases, note that the evident bijections $\objN \to \objN \times \objN$ and $\objN \to \set{1, \ldots, n} \times \objN$ induce bijections $\objI^\objN \to (\objI^\objN)^\objN$ and $\objI^\objN \to (\objI^n)^\objN$.
  Composing these with the surjection $e : \objN \to \objI^\objN$ yields surjections $\objN \to (\objI^\objN)^\objN$ and $\objN \to (\objI^n)^\objN$, respectively, so Lawvere's theorem applies again.
\end{proof}

In~\cref{sec:comp-clos-interv} we shall use the following variant of Brouwer's fixed point theorem for partial maps with $\neg\neg$-stable domains of definition.

\begin{theorem}
  \label{thm:partial-brouwer}%
  For every $n \in \NN$, the topos $\TT{\mil}$ validates
  \begin{equation*}
    \all{\phi \in \objI^n \to \ClProp}
    \all{f \in {\set{x \in \objI^n \such \phi(x)}} \to \objI^n}
    \some{y \in \objI^n}
    \phi(y) \lthen f(y) = y.
  \end{equation*}
\end{theorem}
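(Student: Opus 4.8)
The plan is to exhibit a single uniform realizer for the displayed formula, arguing entirely at the level of realizers via the concrete descriptions of $\objN$ and $\objI^n$. First I would record what such a realizer must do: by \cref{lem:nno-assembly,lem:interval-assembly}, a realizer of $\vec y \in |\objI^n|$ is (a $\combPair$-encoding of) an $n$-tuple $\vec m = (m_1,\dots,m_n)$ of naturals with $\mil(m_i) = y_i$; a realizer $\R{g}$ of a morphism $f \colon \set{x \in \objI^n \such \phi(x)} \to \objI^n$ is an index such that for every $\alpha \in \MM{\mil}$ and every code $\langle\vec m\rangle$ of a point $\vec y$ in the sub-assembly $\set{x \such \phi(x)}$, the value $\pr[\alpha]{\R{g}}(\langle\vec m\rangle)$ is defined and codes $f(\vec y)$; and a realizer of the proposition $\phi(\vec y)$ exists exactly when $\vec y \in \set{x \such \phi(x)}$ but carries no information, since $\phi \colon \objI^n \to \ClProp$ and $\ClProp \cong \nabla\two$. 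So the realizer only consumes $\R{g}$ and the oracle $\alpha$, and it must return a code of some $\vec y \in |\objI^n|$ together with a realizer of $\phi(\vec y) \limply f(\vec y) = \vec y$.

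The heart of the construction is a self-referential choice of $\vec m$. Recall the index $\R{v}$ of \cref{sec:oracle-comp-maps}, for which $\pr[\alpha]{\pr[\alpha]{\R{v}}(k)}$ is a total rapid representation of $\srep(\alpha)(k)$. Using the relativized recursion theorem (as in \cref{ex:numers-vs-numerals}) simultaneously for $i = 1,\dots,n$, I would produce indices $m_1,\dots,m_n$, computable from $\R{g}$ and oracle-independently, such that $\pr[\alpha]{m_i}$ on input $k$ first evaluates $w \defeq \pr[\alpha]{\R{g}}(\langle m_1,\dots,m_n\rangle)$ and, if $\defined{w}$ with $i$-th decoded component $w_i$, returns $\pr[\alpha]{\pr[\alpha]{\R{v}}(w_i)}(k)$. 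Setting $\vec y \defeq (\mil(m_1),\dots,\mil(m_n)) \in |\objI^n|$, the key claim is: if $\vec y$ lies in $\set{x \such \phi(x)}$, then $f(\vec y) = \vec y$. Indeed, then $\langle\vec m\rangle$ realizes $\vec y$ as an element of that sub-assembly, so for every $\alpha \in \MM{\mil}$ the value $w = \pr[\alpha]{\R{g}}(\langle\vec m\rangle)$ is defined and codes $f(\vec y)$, giving $\mil(w_i) = f(\vec y)_i$; hence $\pr[\alpha]{m_i}$ is a total rapid sequence representing $f(\vec y)_i$, the same real for every $\alpha \in \MM{\mil}$ even though the code $w$ may vary with $\alpha$. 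Thus $m_i$ is an $\mil$-index of $f(\vec y)_i$, so by \cref{thm:miller-sequence} we get $\mil(m_i) = \rcomp{\mil}{m_i} = f(\vec y)_i$; combined with $\mil(m_i) = y_i$ this yields $f(\vec y) = \vec y$.

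Finally I would assemble the realizer: on realizers for $\phi$ and $f$ it drops the former, computes $\vec m$ from $\R{g}$ by the recursion theorem, and returns a realizer of $\some{y \in \objI^n} \phi(y) \lthen f(y) = y$ witnessed by $\vec y$, consisting of (i) a realizer $\langle\vec m\rangle$ of $\Ex{\objI^n}(\vec y)$ --- legitimate since $\mil(m_i) = y_i$ whether or not $\vec y$ is in the domain, the implication being realized vacuously in the latter case --- and (ii) a realizer of $\phi(\vec y) \limply f(\vec y) = \vec y$ that, fed any realizer of $\phi(\vec y)$ (so that $\vec y \in \set{x \such \phi(x)}$ and hence $f(\vec y) = \vec y$ by the claim), produces a realizer of the equation $f(\vec y) = \vec y$ in $\objI^n$ out of $\R{g}$, $\langle\vec m\rangle$ and the oracle. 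Everything here is put together by combinatory completeness, so the resulting realizer is uniform; the case $n = 0$ is trivial. I expect the only genuinely non-routine point to be the claim, where the self-reference is discharged and where Miller's theorem --- and thus, through \cref{thm:generalized-Brouwer}, Brouwer's fixed-point theorem --- is the active ingredient, forcing the self-referentially defined $m_i$ onto a fixed point of $f$ exactly when it falls inside the domain. Secondary care is needed to unfold the topos (rather than tripos) readings of implication and existential with their existence predicates, and to confirm parameter-uniformity of the realizer throughout.
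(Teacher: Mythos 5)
Your proposal is correct and follows essentially the same route as the paper's proof: a self-referential choice of Miller indices (the paper implements the self-reference with the fixed-point combinator $\comb{Z}$ rather than the relativized recursion theorem, an interchangeable device, and writes out only the case $n=2$) so that, whenever the resulting point lands in the domain of~$f$, \cref{thm:miller-sequence} forces it to be a fixed point. If anything, your write-up is more explicit than the paper's at the crucial step, namely routing the output of the realizer of~$f$ through~$\R{v}$ so that $\pr[\alpha]{m_i}$ represents $f(\vec y)_i$ uniformly in $\alpha \in \MM{\mil}$ before invoking Miller's theorem.
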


\begin{proof}
  We demonstrate the proof for $n = 2$, which is the instance used in \cref{prop:drinking-buddies}.
  We seek $\R{r} \in \NN$ such that, for all $\alpha \in \MM{\mil}$,
  $\phi : \carrier{\objI^2} \to \set{\bot, \top}$, and
  $f : \set{x \in \carrier{\objI^2} \such \phi(x)} \to \carrier{\objI^2}$
  with $\R{f} \in \NN$ satisfying
  \begin{equation*}
    \all{x \in \carrier{\objI^2}}
    \all{\R{x} \in \Ex{\objI^2}(x)}
    \all{\beta \in \MM{\mil}}
    \phi(x) \lthen (\beta \at \R{f} \, \R{x}) \in \Ex{\objI^2}(f(x)),
  \end{equation*}
  there is $y \in \carrier{\objI^2}$ such that $(\alpha \at \R{r} \, \R{f}) \in \Ex{\objI^2}(y)$ and if $\phi(y)$ then $f(y) = y$.

  The most direct way of computing a potential fixed point of~$f$ would employ~$\R{r}$ such that
  $\R{r} \, \R{f} \simeq \R{f} \, (\R{r} \, \R{f})$. This is too simplistic, but making sure that~$\R{f}$
  is applied to a pair of realizers representing reals gives us enough control.
  Specifically, we shall use the fact that $\ucode{\abstr{k} \wrep \, \R{m} \, k} \in \Ex{\objI}(\mil(\R{m}))$
  for all $\R{m} \in \NN$, where~$\wrep$ is the realizer from~\eqref{eq:wrep}.
  Recalling the fixed-point combinator~$\comb{Z}$ we thus define
  \begin{align*}
    \R{r} &\defeq \ucode{\comb{Z} \,
                (\abstr{s} \abstr{g} g \,
                    (\combPair \,
                      (\abstr{k} \wrep \, (\combFst \, (s \, g)) \, k) \,
                      (\abstr{k} \wrep \, (\combSnd \, (s \, g)) \, k)%
                    )
                )},\\
    \R{a} &\defeq \ucode{\abstr{k} \wrep \, (\combFst \, (\R{r} \, \R{f})) \, k},\\
    \R{b} &\defeq \ucode{\abstr{k} \wrep \, (\combSnd \, (\R{r} \, \R{f})) \, k},
  \end{align*}
  %
  %
  %
  For any $\alpha \in \MM{\mil}$, the fixed-point equation for~$\comb{Z}$ yields
  $(\alpha \at \R{r} \, \R{f}) \kleq (\alpha \at \R{f} \, (\combPair \, \R{a} \, \R{b}))$.

  We claim that $y \defeq (\mil(\R{a}), \mil(\R{b})) \in \carrier{\objI}^2$, which is realized by $\combPair\,\R{a}\,\R{b}$, satisfies $\phi(y) \lthen f(y) = y$.
  To establish the claim, assume $\phi(y)$, so that
  $f(y) = (u, v)$ for unique $u, v \in \carrier{\objI}$.
  Consider any $\alpha \in \MM{\mil}$.
  Because $(\alpha \at \R{f}\,(\combPair\,\R{a} \, \R{b})) \in \Ex{\objI^2}((u, v))$ and
  $\alpha \at \R{f} \, (\combPair \, \R{a} \, \R{b}) = \alpha \at \R{r} \, \R{f}$, it follows that
  $(\alpha \at \combFst \, (\R{r} \, \R{f})) \in \Ex{\objI}(u)$ and
  $(\alpha \at \combSnd \, (\R{r} \, \R{f})) \in \Ex{\objI}(v)$.
  Moreover, $\alpha \at \combFst \, (\R{r} \, \R{f})$ realizes the same real as~$\R{a}$ and
  $\alpha \at \combSnd \, (\R{r} \, \R{f})$ the same real as~$\R{b}$, therefore $\mil(\R{a}) = u$ and $\mil(\R{b}) = v$, from which $y = f(y)$ follows.
\end{proof}

\subsection{The intermediate value theorem}
\label{sec:interm-value-theor}
The 1-dimensional Brouwer's fixed-point theorem and the intermediate value theorem are derivable from each other.

\begin{lemmaC}
  \label{lem:max-neq-eq}%
  For any $a, b \in \RR$, if $\max(a, b) \neq a$ then $\max(a, b) = b$, and similarly for $\min$.
\end{lemmaC}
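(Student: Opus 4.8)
The plan is to recast the statement as a purely order-theoretic implication about Dedekind reals and then settle it by a two-line intuitionistic argument. First I would recall that $\max(a,b)$ is the join of $\{a,b\}$ in $\RRd$, so that $a \le \max(a,b)$, $b \le \max(a,b)$, and $\max(a,b) \le c \liff (a \le c \land b \le c)$ for every $c$. From these facts, together with antisymmetry of $\le$ on $\RRd$ (equality of cuts), one reads off $\max(a,b) = a \liff b \le a$ and $\max(a,b) = b \liff a \le b$: for the former, $b \le a$ gives $\max(a,b) \le a$, hence $\max(a,b) = a$ by $a \le \max(a,b)$; conversely $\max(a,b) = a$ gives $b \le \max(a,b) = a$. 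Thus the claim $\max(a,b) \ne a \lthen \max(a,b) = b$ is equivalent to the implication $\lnot(b \le a) \lthen a \le b$.

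Next I would invoke the standard characterisation of the order on Dedekind reals, $x \le y \liff \lnot(y < x)$, where the nontrivial direction uses roundedness of the lower cut and locatedness (if $q < r \in L_x$ and $\lnot(y < x)$, then locatedness of $y$ at $q < r$ forces $q \in L_y$, since $r \in U_y$ would give $y < x$). Under this equivalence the hypothesis $\lnot(b \le a)$ becomes $\lnot\lnot(a < b)$, and the goal $a \le b$ becomes $\lnot(b < a)$. From irreflexivity and transitivity of $<$ one has $a < b \lthen \lnot(b < a)$; applying $\lnot\lnot$ monotonically and collapsing $\lnot\lnot\lnot$ to $\lnot$ yields $\lnot\lnot(a < b) \lthen \lnot(b < a)$, which is exactly the needed implication. (One could instead obtain $\max(a,b) = b$ from $\lnot\lnot(\max(a,b) = b)$ using \cref{prop:RRd-stable-equality}, but the route above does not even need that.)

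The $\min$ half is entirely symmetric: $\min(a,b)$ is the meet of $\{a,b\}$, so $\min(a,b) = a \liff a \le b$ and $\min(a,b) = b \liff b \le a$, and therefore $\min(a,b) \ne a \lthen \min(a,b) = b$ reduces to $\lnot(a \le b) \lthen b \le a$, which is the implication of the previous paragraph with $a$ and $b$ interchanged.

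I do not expect any genuine obstacle here. The only step that deserves a moment's care is the constructive equivalence $x \le y \liff \lnot(y < x)$ on $\RRd$, which must be justified from locatedness and roundedness rather than assumed; once the problem is rephrased in terms of $<$ and $\le$, the remaining content is pure intuitionistic propositional logic.
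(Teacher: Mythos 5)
Your proof is correct and follows essentially the same route as the paper's: both reduce the hypothesis $\max(a,b) \neq a$ to $\neg(b \leq a)$ and then exploit the fact that the remaining content is stable under double negation. The only difference is where stability is applied — the paper invokes \cref{prop:RRd-stable-equality} to reduce the goal to $\neg\neg(\max(a,b) = b)$ and derives a contradiction from $\neg(a \leq b) \land \neg(b \leq a)$, whereas you observe that the goal, rephrased as $a \leq b$, is already a negative statement ($\neg(b < a)$) and conclude directly; both are sound.
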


\begin{proof}
  If $\max(a, b) \neq a$ then $\neg (b \leq a)$.
  By \cref{prop:RRd-stable-equality} it suffices to show that $\neg (\max(a, b) \neq b)$.
  If $\max(a, b) \neq b$ then $\neg (a \leq b)$, which together with $\neg (b \leq a)$ yields a contradiction.
\end{proof}

\begin{theorem}[Intermediate value theorem]
  \label{thm:ivt}%
  In the topos $\TT{\mil}$, if $f : \objI \to \RRd$ satisfies $f(0) < 0 < f(1)$ then $f(x) = 0$ for some $x \in \objI$.
\end{theorem}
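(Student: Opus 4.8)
The plan is to reduce the statement to the one-dimensional case of Brouwer's fixed-point theorem, i.e.\ the $n=1$ instance of \cref{thm:internal-brouwer}. This is exactly the step that makes the theorem go through in $\TT{\mil}$ despite being constructively false in general: we do \emph{not} need $f$ to be continuous, because in $\TT{\mil}$ every map $\objI \to \objI$ has a fixed point. All reasoning below takes place in the internal intuitionistic logic of $\TT{\mil}$.

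First I would introduce the truncated map $g : \objI \to \objI$, $g(x) \defeq \max(0, \min(1, x - f(x)))$; this is a genuine morphism, being built from $f$, subtraction, and the maps $\RRd \to \RRd$ given by $z \mapsto \min(1,z)$ and $z \mapsto \max(0,z)$, and its values lie in $[0,1]\cap\carrier{\RRd} = \carrier{\objI}$. By \cref{thm:internal-brouwer} there is $x_0 \in \objI$ with $g(x_0) = x_0$, and I claim $f(x_0) = 0$. Since equality on $\RRd$ is $\neg\neg$-stable by \cref{prop:RRd-stable-equality}, and since $f(x_0) \neq 0$ implies $f(x_0) < 0 \lor 0 < f(x_0)$ by \cref{lem:lt-stable}, it suffices to derive a contradiction from each disjunct. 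If $f(x_0) < 0$, then $x_0 - f(x_0) > x_0 \geq 0$, so $\min(1, x_0 - f(x_0)) > 0$ and hence $g(x_0) = \min(1, x_0 - f(x_0))$; from $x_0 = g(x_0)$ and $f(x_0) \neq 0$ we get $\min(1, x_0 - f(x_0)) = x_0 \neq x_0 - f(x_0)$, so \cref{lem:max-neq-eq} forces $\min(1, x_0 - f(x_0)) = 1$, i.e.\ $x_0 = 1$, whence $f(x_0) = f(1) > 0$, a contradiction. The case $0 < f(x_0)$ is symmetric: then $x_0 - f(x_0) < x_0 \leq 1$, so $g(x_0) = \max(0, x_0 - f(x_0))$, and $\max(0, x_0 - f(x_0)) = x_0 \neq x_0 - f(x_0)$ forces $\max(0, x_0 - f(x_0)) = 0$ by \cref{lem:max-neq-eq}, so $x_0 = 0$ and $f(x_0) = f(0) < 0$, again a contradiction. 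Therefore $\neg(f(x_0) \neq 0)$, and $f(x_0) = 0$ by $\neg\neg$-stability.

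The only delicate point is bookkeeping: one must verify that the truncation $g$ behaves correctly inside the internal logic, which amounts to handling a few elementary real inequalities constructively (such as $a > 0 \Rightarrow \min(1,a) > 0$, and $\max(0,z) = z$ whenever $z \geq 0$) and, crucially, replacing every tempting case split on the value of a $\max$ or $\min$ by an appeal to the stability results \cref{lem:max-neq-eq}, \cref{prop:RRd-stable-equality}, and \cref{lem:lt-stable} rather than to excluded middle. With that discipline the argument is essentially the three lines above.
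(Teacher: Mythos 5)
Your proof is correct and follows the paper's argument exactly: the same auxiliary map $g(x) = \max(0,\min(1,x-f(x)))$, the same appeal to \cref{thm:internal-brouwer}, and the same use of \cref{lem:max-neq-eq} together with $\neg\neg$-stability to extract $f(x_0)=0$ from the fixed point. You merely spell out the two applications of \cref{lem:max-neq-eq} (via the case split supplied by \cref{lem:lt-stable}) in more detail than the paper's terse version does.
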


\begin{proof}
  Given such an~$f$, define $g : \objI \to \objI$ by
  $g(x) \defeq \max(0, \min(1, x - f(x)))$.
  By \cref{thm:internal-brouwer} there is $x \in \objI$ such that
  \begin{equation*}
    \max(0, \min(1, x - f(x))) = x.
  \end{equation*}
  Use \cref{lem:max-neq-eq} to derive $\min(1, x - f(x)) = x - f(x)$ and once more to derive $x - f(x) = x$,
  yielding $f(x) = 0$.
\end{proof}

\subsection{Real maps do not jump}
\label{sec:continuity-maps}
Say that $f : \RR \to \RR$ \defemph{jumps at $x$} if there is $\epsilon > 0$ such that $\abs{f(y) - f(x)} > \epsilon$ for all $y > x$. Countability of reals is at odds with existence of such explicitly discontinuous maps.

\begin{propositionC}
  \label{prop:real-maps-jump-real-dec}%
  If there is a map with a jump then $x = y$ or $x \neq y$ for all $x, y \in \RR$.
\end{propositionC}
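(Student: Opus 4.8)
The plan is to use the jump as a device for deciding equality of reals. Suppose $f \colon \RR \to \RR$ jumps at some point $c \in \RR$, witnessed by an $\epsilon > 0$ with $|f(z) - f(c)| > \epsilon$ for every $z > c$. Given arbitrary $x, y \in \RR$, I would form the nonnegative real $w \defeq |x - y| = \max(x - y,\, y - x)$ and the test point $z \defeq c + w$, so that $z \geq c$ always, while $z = c$ holds exactly when $x = y$. The point of this choice is that it transports an \emph{always available} comparison concerning $|f(z) - f(c)|$ into a decision about the equality $x = y$.

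Concretely, the first step is the approximate comparison valid for a Dedekind real against two rationals: since $\epsilon/4 < \epsilon/2$, one has
\[
  |f(z) - f(c)| < \tfrac{\epsilon}{2}
  \qquad\text{or}\qquad
  |f(z) - f(c)| > \tfrac{\epsilon}{4}.
\]
In the first case $|f(z) - f(c)| < \epsilon/2 < \epsilon$, so $\neg\bigl(|f(z) - f(c)| > \epsilon\bigr)$; by the contrapositive of the jump condition (instantiated at $z$) this gives $\neg(z > c)$, i.e.\ $z \leq c$, and together with $z \geq c$ and antisymmetry of $\leq$ we get $z = c$, hence $w = 0$, hence $x = y$. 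In the second case $|f(z) - f(c)| > 0$, so $f(z) \neq f(c)$; but were $x = y$ we would have $z = c$ and therefore $f(z) = f(c)$, a contradiction, so $\neg(x = y)$, i.e.\ $x \neq y$. Combining the two cases yields $x = y \lor x \neq y$ for the given $x, y$.

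The only facts invoked beyond pure logic are standard and intuitionistically valid for the Dedekind reals: $\leq$ is antisymmetric, $|r| = 0$ iff $r = 0$, and for a real $r$ and rationals $p < q$ one has $r < q \lor p < r$ (a form of locatedness). I do not expect a genuine obstacle here; the points needing a little care are the bookkeeping around the jump point $c$ versus the quantified variables $x, y$ of the statement, and the fact that the conclusion uses $\neq$ in the sense of $\neg(x = y)$ rather than apartness — trying to extract apartness of $x$ and $y$ in the second case would require the strict order on reals to be $\lnot\lnot$-stable, which is not available in general (cf.\ \cref{lem:lt-stable}).
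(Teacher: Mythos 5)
Your proof is correct and follows essentially the same route as the paper's: evaluate $f$ at the point shifted by $|x-y|$, use cotransitivity of $<$ to compare $|f(z)-f(c)|$ against two thresholds below the jump size, and read off $x = y$ in one case and $x \neq y$ in the other. The paper merely normalizes first (WLOG $f(0)=0$ and $f(z)>1$ for $z>0$), which is a cosmetic difference.
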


\begin{proof}
  Without loss of generality we consider $f : \RR \to \RR$ such that~$f(0) = 0$ and $f(z) > 1$ for all $z > 0$.
  Given any $x, y \in \RR$, either $f(\abs{x - y}) < \sfrac{2}{3}$ or $f(\abs{x-y}) > \sfrac{1}{3}$.
  In the former case we obtain $\neg (\abs{x - y} > 0)$, hence $x = y$ by \cref{prop:RRd-stable-equality}.
  In the latter case we obtain $\neg (\abs{x - y} = 0)$, hence $x \neq y$.
\end{proof}

\begin{corollaryC}
  \label{prop:real-maps-jump}%
  If there is a map with a jump then $\RR$ is uncountable.
\end{corollaryC}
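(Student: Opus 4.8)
The plan is to obtain the corollary from \cref{prop:real-maps-jump-real-dec} by a diagonal argument. By that proposition, a map with a jump makes equality of reals decidable, $\all{x,y\in\RR} x = y \lor x \neq y$, so it suffices to prove intuitionistically that decidable equality of $\RR$ is incompatible with countability; since $\RR$ is inhabited, it is enough to show that decidable equality of $\RR$ implies $\RR$ is sequence-avoiding, because a sequence-avoiding set is uncountable.

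I would establish this by rerunning the nested-interval construction from the proof of \cref{thm:R-uncountable}. Given a sequence $(a_n)_n$ of reals, build rational intervals $[x_n, y_n]$ with $x_n \le x_{n+1} < y_{n+1} \le y_n$, $y_n - x_n < 2^{-n}$, and $a_n \notin [x_{n+1}, y_{n+1}]$, starting from a rational interval such as $[0, \sfrac14]$; the limit $\ell \defeq \lim_n x_n = \lim_n y_n$ is the limit of a rapidly converging rational sequence, hence a Cauchy, and so a Dedekind, real, with no appeal to choice, and $\ell \neq a_n$ for all $n$ exactly as in \cref{thm:R-uncountable}. The only step in that argument that invokes the law of excluded middle is the resolution of the overlap in \eqref{eq:R-uncountable}, and this I would replace by a decision based on decidable equality: at stage $n$ set $q \defeq (3x_n + 2y_n)/5$ and $r \defeq (2x_n + 3y_n)/5$, so $x_n < q < r < y_n$; test $a_n$ against $q$, $r$, and the rational midpoints of $(q,r)$ using decidable equality, combine this with a rapid rational approximation of $a_n$ (available from \cref{prop:stradle-closelyd}), and pick a rational sub-interval of $[x_n, y_n]$ of width below $2^{-n-1}$ that omits $a_n$. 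As a fallback that avoids reworking \eqref{eq:R-uncountable}, one may instead use decidable equality to decide membership in the middle-thirds Cantor set $K \subseteq [0,1]$ (by testing $\mathrm{dist}(x, K) = 0$), turn the standard embedding $\Cantor \hookrightarrow \RR$ into a retract, and conclude from \cref{cor:cantor-diagonal} that $\RR$ is uncountable.

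The main obstacle is precisely this selection step: decidable equality only decides statements of the form $a_n = c$ with $c$ rational, whereas the classical proof of \cref{thm:R-uncountable} decides the genuinely stronger strict inequality $c < a_n$; so the sub-interval choice must be routed through equality tests alone, the favourable case being that $a_n$ coincides with a tested rational, which can then simply be excised. Once the selection is justified, conditions a)--c) are verified routinely, $\RR$ is sequence-avoiding, and hence uncountable, which is the assertion of the corollary.
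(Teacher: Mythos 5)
Your overall strategy is the paper's own: invoke \cref{prop:real-maps-jump-real-dec} to get decidable equality, then rerun the nested-interval construction with rational endpoints, so that the limit exists as a (Cauchy, hence Dedekind) real without any appeal to choice. The gap is exactly the step you flag at the end, and neither of your patches closes it. Testing $a_n = c$ for finitely many rationals $c$ yields, in the generic case, nothing but negative answers, and these do not locate $a_n$ within $[x_n,y_n]$ at all; so the ``favourable case'' is the only case you handle. The appeal to \cref{prop:stradle-closelyd} cannot rescue the unfavourable case: that proposition is a bare existential, and extracting a witness at every stage of the recursion is precisely an instance of countable (dependent) choice. Indeed, if rational $2^{-k}$-approximations could be extracted choice-freely you would have proved \cref{thm:R-uncountable} outright, with no jump hypothesis, contradicting \cref{thm:countable-reals}. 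The Cantor-set fallback has the same defect: deciding $\mathrm{dist}(x,K)=0$ decides membership, but a retraction onto $\Cantor$ requires computing ternary digits, and digit extraction is again a family of order comparisons with rationals rather than equality tests.

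The missing idea is to apply the hypothesis $\all{x,y \in \RR} x = y \lor x \neq y$ not to $a_n$ and a rational, but to $\max(a_n,q)$ and $q$. Either $\max(a_n,q) = q$, which gives $a_n \le q$; or $\max(a_n,q) \neq q$, which gives $\neg(a_n < q)$ (since $a_n < q$ would force $\max(a_n,q) = q$), i.e.\ $q \le a_n$. Hence for every rational $q$ the dichotomy ``$a_n \le q$ or $q \le a_n$'' is decidable, and taking $q = (3x_n+2y_n)/5$ resolves the overlap in \eqref{eq:R-uncountable}: in the first case $a_n \le q < (2x_n+3y_n)/5$, so the second option applies and $a_n < x_{n+1}$; in the second case $y_{n+1} = (4x_n+y_n)/5 < q \le a_n$, which is all that condition c) of \cref{thm:R-uncountable} requires. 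This is the content of the paper's one-line proof that decidable equality makes the choice in \eqref{eq:R-uncountable} effective. With this replacement your argument goes through; as written, the selection step is not justified.
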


\begin{proof}
  By \cref{prop:real-maps-jump-real-dec} equality on the reals would be decidable, therefore Cantor's method
  of nested intervals from the proof of \cref{thm:R-uncountable} could be employed. Specifically, we could decide
  whether the first option in~\eqref{eq:R-uncountable} holds.
\end{proof}

Thus in the topos~$\TT{\mil}$, equality of reals is not decidable and there are no maps with jumps.
An alternative reason for the same fact is this: a map with a jump may be used to violate the intermediate value theorem.

Does~$\TT{\mil}$ validate the stronger statement ``all maps $\RRd \to \RRd$ are continuous''?
In recursive analysis this was proved by Kreisel, Lacombe, Shoenfield~\cite{KreiselLacombeShoenfield59} and Tseitin~\cite{Tseitin67}. Our attempts to adapt their proof to~$\TT{\mil}$ failed, so we must leave the question unanswered.

\subsection{A modicum of classical logic}
\label{sec:modic-class-logic}

Brouwer's fixed-point theorem is a fairly unusual property for an intuitionistic topos to have because it implies a constructive taboo, namely the so-called \emph{analytic Limited Lesser Principle of Omniscience (LLPO)}.

\begin{theorem}[Analytic LLPO]
  \label{thm:llpo}%
  In the topos $\TT{\mil}$, every Dedekind real is non-negative or non-positive.
\end{theorem}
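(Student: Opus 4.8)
The plan is to deduce the dichotomy $\all{x \in \RRd} 0 \le x \lor x \le 0$ from the intermediate value theorem (\cref{thm:ivt}), which already holds in $\TT{\mil}$, together with locatedness of Dedekind cuts. Reasoning internally, fix $x \in \RRd$ and pass to $a \defeq x \cdot (1 + 2|x|)^{-1}$. Since $1 + 2|x| > 0$, the real $a$ is a positive multiple of $x$, so $x < 0 \lthen a < 0$ and $0 < x \lthen 0 < a$; hence it suffices to prove $0 \le a \lor a \le 0$. The gain from this rescaling is that $|a| = |x|/(1 + 2|x|) < \tfrac12$ unconditionally, which keeps the endpoint values of the auxiliary function below under control for every possible $a$.

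Next I would introduce $f : \objI \to \RRd$,
\[
  f(t) \defeq \min\!\bigl(\, 4(1 - a)\, t - 1,\ \max\bigl(\, -a + 4(1 + a)(t - \tfrac34),\ -a \,\bigr)\,\bigr),
\]
which is a genuine morphism because $\RRd$ is closed under $\min$, $\max$, addition, and multiplication by rationals. Using $|a| < \tfrac12$ one computes $f(0) = -1$ and $f(1) = 1$, so $f(0) < 0 < f(1)$ and \cref{thm:ivt} yields some $t \in \objI$ with $f(t) = 0$. The map $f$ is arranged so that a zero exposes the sign of $a$: for $t \ge \tfrac14$ we have $4(1 - a)t - 1 \ge -a$ (here $1 - a > 0$), whence $f(t) \ge -a$ whenever $t \ge \tfrac14$; and for $t \le \tfrac34$ the term $-a + 4(1 + a)(t - \tfrac34)$ is $\le -a$ (here $1 + a > 0$), so $\max(\cdots, -a) = -a$ and $f(t) \le -a$ whenever $t \le \tfrac34$.

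Finally I would apply locatedness of the cut of $t$ at the rationals $\tfrac14 < \tfrac34$, obtaining $\tfrac14 < t \lor t < \tfrac34$. If $\tfrac14 < t$ then $f(t) \ge -a$, so assuming $a < 0$ would give $0 = f(t) \ge -a > 0$, a contradiction; hence $0 \le a$. If $t < \tfrac34$ then $f(t) \le -a$, so assuming $0 < a$ would give $0 = f(t) \le -a < 0$, again impossible; hence $a \le 0$. In either case $0 \le a \lor a \le 0$, and transporting back along the two implications above gives $0 \le x \lor x \le 0$. I expect the one genuinely delicate point to be the design and verification of $f$: splicing three linear pieces (negative, a plateau, positive) directly is not continuous unless $a = 0$, and recording $a$ as the plateau height does not localize its sign, whereas the $\min$-of-$\max$ form above cures both defects; checking the two displayed inequalities on $f$ and the endpoint values $f(0) = -1$, $f(1) = 1$ — which is exactly where $|a| < \tfrac12$ enters — is the part to carry out carefully.
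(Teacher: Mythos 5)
Your proof is correct, but it routes through the intermediate value theorem rather than following the paper, which applies the one-dimensional Brouwer fixed-point theorem (\cref{thm:internal-brouwer}) directly: there, one takes a fixed point $y \in \objI$ of $y \mapsto \max(0, \min(1, y + x))$, invokes locatedness to get $\tfrac13 < y \lor y < \tfrac23$, and reads off the sign of $x$ from which truncation is active. Since the paper itself derives the IVT from Brouwer (\cref{thm:ivt}), the two arguments rest on the same engine and share the same closing move (locatedness of a cut at two rationals, then sign extraction), but the paper's version avoids both your rescaling $a = x\,(1+2|x|)^{-1}$ and the design of the spliced min-of-max function: the single truncated translation does all the work, because the fixed point itself, rather than a zero of an auxiliary map, carries the sign information. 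Your construction is sound -- the endpoint computations, the two one-sided bounds $f(t) \ge -a$ for $t \ge \tfrac14$ and $f(t) \le -a$ for $t \le \tfrac34$, and the stability arguments all check out, and you are right that a naive case-split definition of $f$ would not even be a well-defined morphism, so the min/max form is genuinely needed (for definability, not merely continuity, since \cref{thm:ivt} assumes no continuity). One small slip: the coefficients $4(1-a)$ and $4(1+a)$ are reals, not rationals, so you are using closure of $\RRd$ under real multiplication, which is of course available.
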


\begin{proof}
  Given any $x \in \RRd$, the map $y \mapsto \max(0, \min(1, y + x))$ has a fixed point~$y \in \objI$.
  Either $\sfrac{1}{3} < y$ or $y < \sfrac{2}{3}$.
  Taking into account that $\neg (x < 0) \lthen x \geq 0$ and $\neg (x > 0) \lthen x \leq 0$ are intuitionistically valid, we resolve both cases easily.
  If $\sfrac{1}{3} < y$ then $x \geq 0$, because $x < 0$ would imply $y = \max(0, \min(1, y + x)) = \max(0, y + x) < y$.
  If $y < \sfrac{2}{3}$ then $x \leq 0$, because $x > 0$ would imply $y = \max(0, \min(1, y + x)) = \min(1, y + x) > y$.
\end{proof}

The non-analytic LLPO states that if $a : \objN \to \two$ attains value~$1$ at most once, then either $\all{n} a_{2 n} = 0$ or $\all{n} a_{2 n + 1} = 0$. This follows from \cref{thm:llpo}: if $\sum_{n} a_n \cdot (- \sfrac{1}{2})^n$ is non-positive then $\all{n} a_{2 n} = 0$ and if it is non-negative then $\all{n} a_{2 n + 1} = 0$.
This is as far as coquetry with classical logic goes, for the stronger \defemph{Limited Principle of Omniscience (LPO)}
\begin{equation*}
  \all{a \in \two^\NN} (\some{n \in \NN} a_n = 1) \lor (\all{n \in \NN} a_n = 0)
\end{equation*}
fails, as we show next.

\begin{lemma}[Majority Decision Principle]
  \label{lem:majority-decision}%
  In the topos $\TT{\mil}$, the following holds:
  for all propositions $p_1, p_2, p_3, q \in \Omega$, if
  \begin{enumerate}
  \item $\neg p_i \lthen \neg q \lor \neg \neg q$ for $i = 1, 2, 3$, and
  \item $p_i \lthen \neg p_j$ for $i \neq j$,
  \end{enumerate}
  then $\neg q \lor \neg \neg q$.
\end{lemma}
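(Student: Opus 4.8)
The plan is to construct a realizer directly, exploiting that the realizers of $\TT{\mil}$ are codes of oracle Turing machines and hence admit dovetailing. First we unfold the conclusion. A proposition $q \in \Omega$ is an element of $\pow{\KK}$, and $\neg q$ equals $\KK$ when $q = \emptyset$ and equals $\emptyset$ otherwise; dually for $\neg\neg q$. Consequently any realizer of $\neg q \lor \neg\neg q$ has, at every parameter, its $\combFst$-component equal to $\ucode{\combTrue}$ when $q = \emptyset$ and to $\ucode{\combFalse}$ when $q \neq \emptyset$, and conversely $\combPair\,\combTrue\,\combK$ realizes $\neg q \lor \neg\neg q$ when $q = \emptyset$ while $\combPair\,\combFalse\,\combK$ does so when $q \neq \emptyset$. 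So realizing the conclusion amounts to deciding, uniformly in the parameter and in realizers of the hypotheses, whether $q = \emptyset$.

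Next we extract the useful content of the hypotheses. A realizer of hypothesis~(2) forces, in the metatheory, that at most one of $p_1, p_2, p_3$ is inhabited: if $p_i$ and $p_j$ were both inhabited with $i \neq j$, then applying the realizer of $p_i \lthen \neg p_j$ to an element of $p_i$ would inhabit $\neg p_j$, whence $p_j = \emptyset$, a contradiction. Hence at least two indices are \emph{good}, meaning $p_i = \emptyset$. For a good index, $\combK$ realizes $\neg p_i$ (since $\neg p_i = \KK$), so feeding $\combK$ to $\combSnd\,\R{h}_i$, where $\R{h}_i$ is the given realizer of $\neg p_i \lthen (\neg q \lor \neg\neg q)$, produces at the ambient parameter an honest realizer of $\neg q \lor \neg\neg q$; its $\combFst$-component, call it $d_i$, is therefore $\ucode{\combTrue}$ or $\ucode{\combFalse}$ according to whether $q = \emptyset$ --- and hence the \emph{same} value for all good indices. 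For the at most one \emph{bad} index ($p_i \neq \emptyset$), the same recipe yields a computation $d_i$ that may diverge or return something arbitrary.

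The realizer of the Majority Decision Principle then discards the (uninformative) realizers witnessing $p_1, p_2, p_3, q \in \Omega$, reads off $\R{h}_1, \R{h}_2, \R{h}_3$ from the hypothesis realizer, forms the three oracle computations $d_i \defeq \combFst\,(\combSnd\,\R{h}_i\,\combK)$, dovetails them, waits until two distinct indices have halted with a common value $b$, and then outputs $\combPair\,b\,\combK$. This search is oracle-computable with index independent of the parameter, hence realized by a uniform element of $\KK$. It terminates because the at least two good $d_i$ all halt with the same value. It is correct because whenever two indices agree at least one of them is good (there being at most one bad index), so the agreed value $b$ is the $q$-determined one, and therefore $\combPair\,b\,\combK$ realizes $\neg q \lor \neg\neg q$.

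The main obstacle is not a single deep step but the combined bookkeeping: one must check that ``wait until two agree'' always terminates (using both that at least two indices are good and that good indices vote alike), that it never returns the wrong bit (using that at most one index is bad), and that the whole procedure is genuinely uniform, with the oracle entering only through queries rather than through the program text. A secondary point requiring care is the claim that every realizer of $\neg q \lor \neg\neg q$ is forced, uniformly in the parameter, to carry the $q$-determined $\combFst$-component, since this is exactly what makes the good indices' votes coincide.
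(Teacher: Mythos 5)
Your proposal is correct and follows essentially the same route as the paper's proof: extract the bit $\combFst\,(\R{r}_i\,\combK)$ from each hypothesis realizer, dovetail the three computations, and output the first value on which two of them agree, using that at most one index is ``corrupted'' so at least two good indices vote alike and any agreeing pair contains a good index. Your explicit justification that every realizer of $\neg q \lor \neg\neg q$ is forced to carry the $q$-determined first component is a detail the paper leaves implicit, but the argument is the same.
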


We first explain the principle informally, from a computational perspective.
Suppose we are given three decision algorithms for deciding the same statement, but any one might be corrupted so that
it gives the wrong answer, or no answer at all. If we interpret $p_i$ as ``the $i$-th algorithm is corrupted'', then the first condition states that a non-corrupted algorithm works correctly, and the second that if one of the algorithms is corrupted then the other two are not. Under these circumstances, we can make the decision by running all three algorithms in parallel and waiting for two of them to agree.
  With this intuition in mind, we describe a program that realizes the principle.

\begin{proof}
  Suppose realizers $\R{r}_i$ of $\neg p_i \lthen \neg q \lor \neg \neg q$ are given for $i = 1, 2, 3$,
  as well as $\alpha \in \MM{\mil}$.
  If a negated proposition is realized, then it is realized by~$\combK$.
  Therefore, the first condition guarantees that if $\neg p_i$ is realized, then
  $\R{r}_i \, \combK \rz[\alpha] \neg q \lor \neg\neg q$, in which case
  \begin{itemize}
  \item if $\alpha \at \combFst \, (\R{r}_i \, \combK) = \ucode{\combTrue}$ then
    $\combPair \, \combTrue \, \combK \rz[\alpha] \neg q \lor \neg\neg q$, and
  \item if $\alpha \at \combFst \, (\R{r}_i \, \combK) = \ucode{\combFalse}$ then
    $\combPair \, \combFalse \, \combK \rz[\alpha] \neg q \lor \neg\neg q$.
  \end{itemize}
  We interleave the computations $\alpha \at \combFst \, (\R{r}_i \, \combK)$ for $i = 1, 2, 3$, and as soon as two of them agree on an answer~$\R{b}$, we output $\combPair \, \R{b} \, \combK$.
  By the second condition, an agreement will be reached and will be correct.
\end{proof}

One might hope to generalize the conclusion of the Majority Decision Principle to~$\neg q \lor q$, but it is unclear how, for if two computations seem to realize the right disjunct, we would not know which one yielded a valid realizer for~$q$.

\begin{lemmaC}
  \label{lem:lpo-irrat-deceq}
  If LPO holds, then $x = y$ or $x \neq y$ for all strongly irrational~$x$ and~$y$.
\end{lemmaC}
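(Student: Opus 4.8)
The plan is to reduce the statement, via \cref{lem:rat-irrat-cauchy}, to the fact that LPO decides equality of any two Cauchy reals, and then to prove the latter by feeding a single binary sequence to LPO. The work is almost entirely in the reduction; the LPO argument is an elementary dichotomy.

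First I would invoke \cref{lem:rat-irrat-cauchy} to note that strongly irrational Dedekind reals are Cauchy reals, so by \cref{def:cauchy-real} we may fix rational sequences $(q_n)_n$ and $(r_n)_n$ with $|x - q_n| < 2^{-n}$ and $|y - r_n| < 2^{-n}$ for all $n$. Then I would define $\sigma \in \two^\NN$ by setting $\sigma_n = 1$ when $|q_n - r_n| > 2^{-n+2}$ and $\sigma_n = 0$ otherwise; this is legitimate because the order on $\QQ$ is decidable. Applying LPO to $\sigma$ yields two cases. If $\sigma_n = 1$ for some $n$, then
\[
  |x - y| \ge |q_n - r_n| - |x - q_n| - |y - r_n| > 2^{-n+2} - 2 \cdot 2^{-n} = 2^{-n+1} > 0,
\]
so $x \neq y$, since $x = y$ would force $|x - y| = 0$. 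If instead $\sigma_n = 0$ for all $n$, then $|q_n - r_n| \le 2^{-n+2}$ for every $n$, whence
\[
  |x - y| \le |x - q_n| + |q_n - r_n| + |r_n - y| < 6 \cdot 2^{-n};
\]
since this holds for all $n$, we get $|x - y| < \epsilon$ for every positive rational $\epsilon$, and as $|x - y| \ge 0$ this forces $|x - y| = 0$, that is, $x = y$. In either case $x = y \lor x \neq y$, as required.

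I do not expect a genuine obstacle. The only subtlety worth flagging is that strong irrationality is essential and not an artifact of the proof: a general Dedekind real need not admit a rapidly converging rational approximation, and decidable equality on all of $\RRd$ must in fact fail in $\TT{\mil}$, since it would resurrect the nested-interval argument of \cref{thm:R-uncountable} and contradict \cref{thm:countable-reals}. Thus the hypothesis is doing exactly the job of producing the sequences $(q_n)_n$, $(r_n)_n$, and the rest is forced.
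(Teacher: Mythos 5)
Your proof is correct, but it takes a different route from the paper's. The paper never passes through Cauchy sequences: it uses strong irrationality directly to observe that for each rational $q$ the biconditional $q < x \liff q < y$ is decidable (since $q < x \lor x < q$ and likewise for $y$), and then applies LPO to the resulting binary sequence indexed by an enumeration of $\QQ$, deciding whether $L_x = L_y$, which is equality of the cuts. You instead route through \cref{lem:rat-irrat-cauchy} to extract rapid rational approximations $(q_n)_n$, $(r_n)_n$ and apply LPO to the termwise comparison $|q_n - r_n| > 2^{-n+2}$; the dichotomy and the arithmetic estimates are all sound, and the final step ($|x-y| < \epsilon$ for all $\epsilon > 0$ together with $|x-y| \geq 0$ forces $x = y$) is intuitionistically valid by antisymmetry of $\leq$ on Dedekind reals. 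What your route buys is a slightly more general fact: LPO decides equality of \emph{any} two reals admitting rapid rational approximations (i.e.\ any two Cauchy reals), with strong irrationality entering only through the bisection argument of \cref{lem:rat-irrat-cauchy}. What the paper's route buys is self-containment: it needs neither the notion of Cauchy real nor that lemma, only the definition of strong irrationality and the decidable order on $\QQ$. Both apply LPO exactly once to a decidable binary sequence, so the proofs are of equal logical strength for the intended application in \cref{thm:majority-lpo-deceq}.
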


\begin{proof}
  For strongly irrational $x$ and $y$ and a rational~$q$, we have
  \begin{equation*}
    (q < x \liff q < y) \lor \neg (q < x \liff q < y).
  \end{equation*}
  Now LPO implies
  \begin{equation*}
    (\all{q \in \QQ} q < x \liff q < y)
    \lor
    \neg (\all{q \in \QQ} q < x \liff q < y),
  \end{equation*}
  which is equivalent to $x = y \lor x \neq y$.
\end{proof}

\begin{theoremC}
  \label{thm:majority-lpo-deceq}%
  If the Majority Decision Principle, the Analytic Markov's Principle,\footnote{The principle was formulated in \cref{lem:lt-stable}.} and LPO hold then equality of reals is decidable.
\end{theoremC}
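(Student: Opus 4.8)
The plan is to feed \cref{lem:majority-decision} a carefully chosen triple of propositions and then tidy up using $\neg\neg$-stability of equality on $\RRd$. Fix $x, y \in \RRd$ and write $z \defeq x - y$, so that $x = y$ holds exactly when $z = 0$. I would apply \cref{lem:majority-decision} with the proposition $q$ instantiated to $(z = 0)$: by \cref{prop:RRd-stable-equality} the conclusion $\neg q \lor \neg\neg q$ is then equivalent to $z \neq 0 \lor z = 0$, i.e.\ to $x \neq y \lor x = y$, so it suffices to produce propositions $p_1, p_2, p_3 \in \Omega$ meeting the two hypotheses of that lemma.

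For the perturbations I would fix three strongly irrational reals whose pairwise differences are again irrational, for instance $r_i \defeq i\sqrt{2}$ for $i \in \{1,2,3\}$; here each $i\sqrt{2}$ is strongly irrational (given $t \in \QQ$, decide $t \le 0$, or else compare $t^2$ with $2$), and $r_i - r_j = (i-j)\sqrt{2}$ is irrational since $\sqrt{2}$ is. Then set
\[
  p_i \defeq \some{t \in \QQ} z + r_i = t,
\]
which reads ``$z + r_i$ is rational''.

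The two hypotheses of \cref{lem:majority-decision} are then checked as follows. If $p_i$ and $p_j$ held at once for $i \neq j$, with witnesses $t_i, t_j \in \QQ$, then $r_i - r_j = t_i - t_j \in \QQ$, contradicting irrationality of $r_i - r_j$; hence $p_i \lthen \neg p_j$. For the remaining hypothesis, suppose $\neg p_i$, which unfolds to ``$z + r_i$ is irrational''; in $\TT{\mil}$ an irrational Dedekind real is strongly irrational (apply \cref{lem:lt-stable} with each rational in turn), so $z + r_i$ and $r_i$ are both strongly irrational, and \cref{lem:lpo-irrat-deceq} together with LPO gives $z + r_i = r_i \lor z + r_i \neq r_i$, that is $z = 0 \lor z \neq 0$, which certainly implies $\neg q \lor \neg\neg q$.

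With both hypotheses in place, \cref{lem:majority-decision} delivers $\neg q \lor \neg\neg q$, and \cref{prop:RRd-stable-equality} upgrades this to $x \neq y \lor x = y$; as $x$ and $y$ were arbitrary, equality of reals is decidable. The delicate point — and the main obstacle — is getting genuine strong irrationality of $z + r_i$, as required by \cref{lem:lpo-irrat-deceq}, out of the purely negative hypothesis $\neg p_i$: this is exactly where the topos-specific \cref{lem:lt-stable} is used, and it is also the reason for taking three perturbations rather than one, since at most one of the $p_i$ can hold and hence at least two of the negative hypotheses $\neg p_i$ are available — precisely the configuration \cref{lem:majority-decision} is built to exploit.
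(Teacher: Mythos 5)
Your proof is correct and follows the same strategy as the paper's: feed the Majority Decision Principle three irrational perturbations of $z$, of which at most one can be rational, handle each negative hypothesis via \cref{lem:lpo-irrat-deceq}, and clean up with $\neg\neg$-stability of equality. The difference is in the perturbations. The paper (following Miller) perturbs multiplicatively, comparing $z\sqrt{2}$ (and implicitly $z\sqrt{3}$, $z\sqrt{5}$ or similar) with $\sqrt{2}$; since all three products are rational when $z=0$, this forces the preliminary reduction to ``$z=1$ or $z\neq 1$ for $z>0$'' via $\max(\sfrac{1}{2}, x-y+1)$. Your additive perturbations $z+i\sqrt{2}$ have pairwise differences $(i-j)\sqrt{2}$ that are irrational unconditionally, so you can take $q$ to be $x=y$ directly and skip that reduction — a genuine simplification. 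You also make explicit a step the paper leaves tacit: passing from the merely negative hypothesis $\neg p_i$ (``$z+r_i$ is irrational'') to the \emph{strong} irrationality needed by \cref{lem:lpo-irrat-deceq}, which you justify with \cref{lem:lt-stable}. Note that this makes your argument specific to $\TT{\mil}$ (or to any topos where inequality of reals implies apartness), whereas the theorem is starred as generally intuitionistically valid; the paper's own proof quietly depends on the same point when it says ``if $z\sqrt{2}$ is strongly irrational,'' so this is a limitation shared with the original rather than a defect of your write-up, and it is harmless for the only use made of the theorem, namely refuting LPO in $\TT{\mil}$.
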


\begin{proof}
  The proof idea is due to Joseph Miller.
  It suffices to prove $x \neq 1 \lor x = 1$ for all positive $x \in \RR$, so consider a positive~$x \in \RR$.
  If $x \sqrt{2}$ is irrational, then it is strongly irrational by the Analytic Markov's Principle,
  hence \cref{lem:lpo-irrat-deceq} yields $x \sqrt{2} = \sqrt{2} \lor x \sqrt{2} \neq \sqrt{2}$,
  and $x = 1 \lor x \neq 1$ follows.
  The same holds if we replace~$2$ with $3$ or~$5$.
  Next, if $x \sqrt{2}$ is rational then $x \sqrt{3}$ and $x \sqrt{5}$ are irrational, and similarly with the roles of the numbers rotated. Thus, we may apply the Majority Decision Principle, \cref{lem:majority-decision}, with~$q$ set to $x = 1$ and $p_1, p_2, p_3$ set respectively to $x \sqrt{2} \in \QQ$, $x \sqrt{3} \in \QQ$, $x \sqrt{5} \in \QQ$.
  This gives $x \neq 1 \lor \neg(x \neq 1)$, which is equivalent to $x \neq 1 \lor x = 1$ by \cref{prop:RRd-stable-equality}.
\end{proof}

\begin{corollary}
  In the topos $\TT{\mil}$, the Limited Principle of Omniscience fails.
\end{corollary}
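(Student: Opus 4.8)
The plan is to derive a contradiction from the assumption that LPO holds in $\TT{\mil}$, using the three results just established: the Majority Decision Principle (\cref{lem:majority-decision}), the implication that it together with LPO forces decidable equality of reals (\cref{thm:majority-lpo-deceq}), and the countability of the reals (\cref{thm:countable-reals}).

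Concretely, I would argue as follows. Suppose LPO is valid in $\TT{\mil}$. By \cref{lem:majority-decision} the Majority Decision Principle is also valid, so \cref{thm:majority-lpo-deceq} applies and equality of Dedekind reals is decidable in $\TT{\mil}$, i.e.\ $\all{x, y \in \RRd} x = y \lor x \neq y$. Combined with \cref{lem:lt-stable}, which gives $\all{x, y \in \RRd} x \neq y \lthen x < y \lor y < x$, this yields full trichotomy for the order on $\RRd$. In particular, for rationals $q < q'$ and any real $a$, the disjunction $q < a \lor a < q'$ is decidable, which is precisely what is needed to make the nested-interval construction in the proof of \cref{thm:R-uncountable} constructive: at each step one simply decides whether $(3x_n + 2y_n)/5 < a_n$ or $a_n < (2x_n + 3y_n)/5$, removing the overlap in \eqref{eq:R-uncountable} without appealing to any choice principle. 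Hence $\RRd$ would be sequence-avoiding, and so uncountable, in $\TT{\mil}$. This is exactly the step already carried out in the proof of \cref{prop:real-maps-jump}, so I would cite it: decidable equality of reals implies $\RRd$ is uncountable.

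This contradicts \cref{thm:countable-reals}, which exhibits an epimorphism $\objN \to \RRd$ in $\TT{\mil}$, so the reals are countable there. The contradiction shows that LPO cannot hold in $\TT{\mil}$. I do not anticipate a real obstacle here: every ingredient is already in place, and the only mildly delicate point — that decidable equality (hence trichotomy of the order) suffices to rerun Cantor's argument intuitionistically — has already been addressed in the proof of \cref{prop:real-maps-jump}.
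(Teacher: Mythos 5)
Your proposal is correct and follows essentially the same route as the paper: the paper's proof likewise combines \cref{lem:majority-decision} and \cref{thm:majority-lpo-deceq} to conclude that LPO would force decidable equality of reals, which was already ruled out in \cref{sec:continuity-maps} by exactly the Cantor nested-interval argument you spell out. Your only addition is to make explicit the use of \cref{lem:lt-stable} to turn decidable equality into the order comparisons needed in \eqref{eq:R-uncountable}, a detail the paper leaves implicit.
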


\begin{proof}
  The topos validates both the Majority Decision Principle and the Analytic Markov's Principle. If it also validated LPO, then \cref{thm:majority-lpo-deceq} would imply that the reals have decidable equality, but we showed in \cref{sec:continuity-maps} that they do not.
\end{proof}

\subsection{Compactness of the closed interval}
\label{sec:comp-clos-interv}
In constructive mathematics various classically equivalent notions of compactness diverge~\cite{bridges02:_compac_contin_const_revis}.
We focus on the Heine-Borel compactness of the closed unit interval, which states that every open cover has a finite subcover, as it is the most interesting one in the topos~$\TT{\mil}$.

Say that a sequence of open intervals $(a_0, b_0), (a_1, b_1), \ldots$ forms a \defemph{singular cover} of $[0,1]$ if it covers the interval, but the sum of lengths $\sum_{i \in \NN} b_i - a_1$ is less than~$1$. 
Of course, such a thing does not exist classically. In the topos~$\TT{\mil}$ it is readily manufactured from the enumeration $\mil : \objN \to \objI$, just take any $0 < \epsilon < 1$ and set
\begin{equation*}
  (a_i, b_i) \defeq (\mil_i - \epsilon \cdot 2^{-i-1}, \mil_i + \epsilon \cdot 2^{-i-1}).
\end{equation*}
The $i$-th interval covers $\mil_i$ and $\sum_{i \in \NN} b_i - a_1 = \epsilon$.
Consequently, the Heine-Borel property fails strongly.

\begin{theoremC}
  \label{thm:singular-cover}%
  If $(a_0, b_0), (a_1, b_1), \ldots$ is a singular cover of~$[0,1]$ then for every $n \in \NN$ the set $[0,1] \setminus \bigcup_{i < n} (a_i, b_i)$ is inhabited.
\end{theoremC}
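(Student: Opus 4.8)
The plan is to reduce the statement to a purely finite, decidable fact about rational intervals. Note first that the hypothesis that the sequence \emph{covers} $[0,1]$ plays no role: all that matters is $\sum_{i\in\NN}(b_i-a_i)<1$, whence also $\sum_{i<n}(b_i-a_i)<1$. The case $n=0$ is trivial since $[0,1]$ is inhabited by $0$, so fix $n\geq 1$.

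The first step is to fatten each $(a_i,b_i)$ to an open interval with \emph{rational} endpoints at negligible cost in total length. Since $\sum_{i<n}(b_i-a_i)<1$, choose a rational $\sigma_0$ with $\sum_{i<n}(b_i-a_i)<\sigma_0<1$ and put $\eta\defeq (1-\sigma_0)/(2n)>0$. For each $i<n$, using the intuitionistically valid density of rationals in $\RRd$ (if $u<v$ then $\some{q\in\QQ} u<q<v$), pick rationals $\hat a_i\in(a_i-\tfrac{\eta}{2},a_i)$ and $\hat b_i\in(b_i,b_i+\tfrac{\eta}{2})$. Then $(a_i,b_i)\subseteq(\hat a_i,\hat b_i)$ and $\hat b_i-\hat a_i<(b_i-a_i)+\eta$, so
\begin{equation*}
  \sum_{i<n}(\hat b_i-\hat a_i)<\sum_{i<n}(b_i-a_i)+n\eta<\sigma_0+\tfrac{1-\sigma_0}{2}=\tfrac{1+\sigma_0}{2}<1 .
\end{equation*}
This used only $2n$ applications of the existential rule, so no form of choice is invoked; and for a fixed $n$ this is a legitimate parametric (finitely iterated) construction, hence the resulting statement holds for all $n\in\NN$.

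The second step solves the rational problem: given rationals $\hat a_i<\hat b_i$ for $i<n$ with $\sum_{i<n}(\hat b_i-\hat a_i)<1$, one produces a rational $x\in[0,1]$ with $x\notin\bigcup_{i<n}(\hat a_i,\hat b_i)$. Since all endpoints are rational, $[0,1]\setminus\bigcup_{i<n}(\hat a_i,\hat b_i)$ is a finite union of closed rational intervals, computable by sorting the breakpoint set $\set{0,1}\cup\set{\hat a_i,\hat b_i : i<n}$ and testing midpoints — an entirely decidable operation on rationals. Its Lebesgue length is at least $1-\sum_{i<n}(\hat b_i-\hat a_i)>0$, so it is not just finitely many points; hence it contains a nondegenerate closed interval $[p,q]$ with $p<q$ rational, and one takes $x\defeq(p+q)/2$. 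This $x$ lies in $[0,1]$ and satisfies $x\notin(\hat a_i,\hat b_i)$, hence $x\notin(a_i,b_i)$, i.e.\ $\neg(a_i<x<b_i)$, for every $i<n$; as this is a finite conjunction it gives $x\notin\bigcup_{i<n}(a_i,b_i)$, so $[0,1]\setminus\bigcup_{i<n}(a_i,b_i)$ is inhabited.

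The only genuinely delicate point I expect is the bookkeeping of the first step — arranging the fattening so that it uses exactly one pair of rationals per interval (no dependent choices) while the length estimate still leaves the rational problem with a union of total length strictly below $1$. Everything downstream is finite rational combinatorics with no constructive subtleties, which is precisely why, in contrast to Cantor's diagonal argument, this weak failure of the Heine–Borel property is provable in any topos.
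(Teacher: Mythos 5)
Your proof is correct and follows essentially the same route as the paper's: reduce to rational endpoints by fattening each interval at a controlled cost in total length (noting that only finitely many existential instantiations are needed, so no choice is invoked), then settle the rational case by finite, decidable interval combinatorics and take a midpoint of a leftover gap. The paper phrases the rational step as an explicit inductive subtraction of intervals with a length invariant, and proves a marginally more general statement about closed intervals inside an arbitrary $(u,v)$, but these are cosmetic differences.
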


\begin{proof}
  We prove the following stronger statement:
  if $[a_0, b_0], \ldots, [a_{n-1}, b_{n-1}]$ are closed intervals and $(u, v)$ is an open interval such that $\sum_{i=0}^{n-1} b_i - a_i < v - u$, then there is $x \in (u, v)$ which is not in any $[a_i, b_i]$.

  Suppose first that all the endpoints are rational numbers, so that comparisons between them are decidable.
  For each $k = 0, \ldots, n$ we compute a list of pairwise disjoint intervals with rational endpoints
  \begin{equation*}
    (u_{k,1}, v_{k,1}), \ldots, (u_{k, m_k}, v_{k, m_k})
  \end{equation*}
  such that $m_k > 0$ and $\sum_{i=k}^{n-1} b_i - a_i < \sum_{j=1}^{m_k} v_{k,j} - u_{k,j}$, and
  \begin{equation*}
    \textstyle
    (u, v) \setminus \bigcup_{i < k} [a_i, b_i] = \bigcup_{j=1}^{m_k} (u_{k,j}, v_{k,j}).
  \end{equation*}
  Start with $m_0 \defeq 1$ and $(u_{0,1}, v_{0,1}) = (u, v)$.
  To progress to $(k+1)$-th stage, replace each $(u_{k,j}, v_{k,j})$ with the difference $(u_{k,j}, v_{k,j}) \setminus [a_{k}, b_{k}]$, which is a disjoint union of zero, one, or two open intervals.
  Note that the total length decreases by at most $b_{k} - a_{k}$.
  In the end we may take the midpoint of $(u_{n,1}, v_{n,1})$ to be the desired~$x$.

  When the endpoints are real numbers we may slightly enlarge each $[a_i, b_i]$ to an interval with rational endpoints\footnote{Doing so requires only finitely many choices, which can be carried out by induction, without appealing to the axiom of choice.} and slightly shrink $(u,v)$ to an interval with rational endpoints, while preserving
  $\sum_{i=0}^{n-1} b_i - a_i < v - u$.
\end{proof}

The situation is reminiscent of the effective topos~\cite[Thm.~4.2 in Ch.~6]{troelstra88:_const_mathem}, except that there one has to work harder to construct a singular cover because the closed unit interval is not countable. Also note that the sum of lengths of the intervals constructed in \cref{thm:singular-cover} is exactly~$\epsilon$, whereas in the effective topos the sum fails to converge, but its partial sums are bounded by~$\epsilon$.

This however is not all that can be said about the Heine-Borel compactness of the closed unit interval in~$\TT{\mil}$.
Observe that the singular cover constructed above consists of intervals whose endpoints are real numbers.
Can we also construct one whose endpoints are rational? Surprisingly, no.
To see why this is the case we need a bit of preparation.

To lay the groundwork for the proof of \cref{lem:fix-point-free-map}, we explain how to constructively extend certain maps to larger domains. Let
\begin{equation*}
  \psquare = \set{(x,y) \in \usquare \such \max(\abs{2 x - 1}, \abs{2 y - 1}) = 1}
\end{equation*}
be the boundary of the unit square,\footnote{Constructively the given boundary need not be equal to the union of the four sides of the square.} and suppose given continuous maps $f : [a,b] \to \psquare$ and $g : [b, c] \to \psquare$ such that $f(b) = g(b)$. Then they have a common continuous extension $h : [a,c] \to \psquare$, namely
\begin{equation}
  \label{eq:glue-abutting}
  h(x) \defeq f(\min(x, b)) + g(\max(x, b)) - f(b),
\end{equation}
where addition and subtraction are carried out coordinate-wise.
Indeed, $h$ maps into $\psquare$ because it is continuous and does so on the dense subset $[a,b] \cup [b,c] \subseteq [a,c]$.
The construction can be iterated to give an extension of any finite number of matching maps defined on abutting closed intervals.

Next, consider a solid rectangle~$ABCD$ and continuous maps $f : AB \to \psquare$, $g : BC \to \psquare$ and $h : CD \to \psquare$ defined on three of its sides, satisfying $f(B) = g(B)$ and $g(C) = h(C)$. We wish to construct a common continuous extension $j : ABCD \to \psquare$ of~$f$, $g$, and~$h$.
Draw points $A'$, $D'$, $E$ so that $AB \cong A'B$ and $CD \cong CD'$, as in \cref{fig:rectangle}.
\begin{figure}[hbt]
  \centering
  \begin{tikzpicture}[scale=0.85]
    \fill[color=white!90!black] rectangle (2,3) ;
    \draw (2,0) -- (2,3) ;
    \draw[thick]
       (2,3) node[anchor=west] {$D$} --
       (0,3) node[anchor=east] {$C$} --
       (0,0) node[anchor=east] {$B$} --
       (2,0) node[anchor=west] {$A$} ;
    \draw (0, -2) node[anchor=east] {$A'$} -- (0,0) ; \fill (0, -2) circle (0.05) ;
    \draw (0, 5) node[anchor=east] {$D'$} -- (0,3) ; \fill (0, 5) circle (0.05) ;
    \fill (3.5, 1.5) node[anchor=north west] {$E$} circle (0.05) ;
    \fill (1.25, 1.95) circle (0.05) node[anchor=north west] {$P$} ;
    \fill (0, 2.2) circle (0.05) node[anchor=east] {$r(P)$} ;
    \draw (3.5, 1.5) -- (0, 2.2) ;
    \draw[thin,dashed] (3.5, 1.5) -- (0, 5) ;
    \draw[thin,dashed] (3.5, 1.5) -- (0, -2) ;
  \end{tikzpicture}
  \caption{Extending maps from three sides to the rectangle}
  \label{fig:rectangle}
\end{figure}
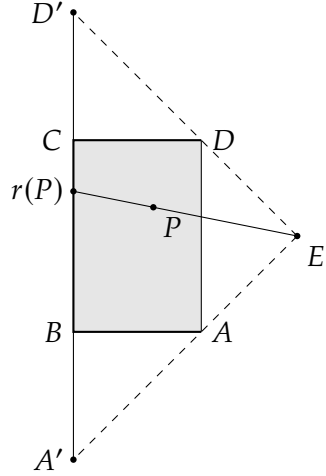
We first transfer $f$ and $h$ along congruences $AB \cong A'B$ and $CD \cong CD'$ to maps $f' : A'B \to \psquare$ and $h' : CD' \to \psquare$, and define $i : A'D' \to \psquare$ to be a common extension of $f$, $g'$ and~$h'$ by an application of~\eqref{eq:glue-abutting}.
Next, we define $r : ABCD \to A'D'$ by mapping any point~$P$ in the rectangle to the intersection of $A'D'$ and the straight line through~$E$ and~$P$.
Finally, we define $j \defeq i \circ r$.

The following lemma is an adaptation of a construction going back to~\cite{orevkov63}, see also \cite[Thm.~IV.10.1]{beeson85:_found_const_mathem}.

\begin{lemmaC}
  \label{lem:fix-point-free-map}%
  Suppose $(a_0, b_0), (a_1, b_1), (a_2, b_2), \ldots$ are open intervals with rational endpoints.
  There is a continuous map $h : ([0,1] \cap \bigcup_{i \in \NN} (a_i, b_i))^2 \to \usquare$ such that $h$ has a fixed point if, and only if, there is $n \in \NN$ for which $[0,1] \subseteq (a_0, b_0) \cup \cdots \cup (a_n, b_n)$.
\end{lemmaC}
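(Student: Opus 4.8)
The plan is to adapt Orevkov's construction \cite{orevkov63} (see also \cite[Thm.~IV.10.1]{beeson85:_found_const_mathem}) to a countable family of rational intervals, with the rectangle-extension technique described just above as the basic building block. Write $V_n \defeq [0,1] \cap \bigcup_{i \leq n} (a_i, b_i)$ for the part of $[0,1]$ covered by the first $n+1$ intervals, and $U \defeq [0,1] \cap \bigcup_{i \in \NN}(a_i,b_i)$, so that $U = \bigcup_n V_n$. Since every $a_i,b_i$ is rational, each $V_n$ is a finite union of open intervals with rational endpoints, all relevant comparisons of these endpoints are decidable, and the predicate $c_n \defiff ([0,1]\subseteq V_n)$ is decidable; moreover $c_n \lthen c_{n+1}$, and whenever $\neg c_n$ we can exhibit rational points of $[0,1]$ lying outside $V_n$. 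Since each $V_n$ is open, $U^2 = \bigcup_n V_n^2$ and every point of $U^2$ lies in some $V_n^2$. We build $h$ as the union of a chain of continuous maps $h_n : V_n^2 \to [0,1]^2$ defined by recursion on $n$, so that $h \defeq \bigcup_n h_n$ is a well-defined continuous map $U^2 \to [0,1]^2$.

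The recursion maintains the invariant: if $\neg c_n$, then $h_n$ moves every point of $V_n^2$ by a positive distance; and at the least $n$ for which $c_n$ holds (if such $n$ exists), one has $V_n^2 = [0,1]^2$ and $h_n$ has a fixed point. To pass from $h_{n-1}$ on $V_{n-1}^2$ to $h_n$ on $V_n^2$, observe that $V_n^2 \setminus V_{n-1}^2$ is a finite union of closed rectangular cells, each abutting $V_{n-1}^2$ along one, two, or three of its sides. We fill in each cell by the rectangle-extension gadget: the values already prescribed by $h_{n-1}$ on the abutting sides furnish the boundary data $f,g,h$ (the matching conditions $f(B)=g(B)$ and $g(C)=h(C)$ hold because $h_{n-1}$ is a single continuous map), and $j \defeq i\circ r$ extends them across the cell. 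When $\neg c_n$ we use a rational witness in $[0,1]\setminus V_n$ to place the auxiliary point $E$ and compose with a fixed rotation, so that the resulting cell map displaces every point; the cells are processed in an order making the matching conditions available when each is reached, and $\neg c_n \lthen \neg c_{n-1}$ guarantees that $h_{n-1}$ was already displacement-free on $V_{n-1}^2$, so $h_n$ is displacement-free on all of $V_n^2$. When $c_n$ holds for the first time, the added cells close $V_{n-1}^2$ up into the whole square; here the closing cell cannot be filled displacement-freely, and instead — exactly as in Orevkov's argument — one chooses the explicit rational-linear boundary data on the arc that the radial map $r$ projects onto so that the one-dimensional extension defining $j$ on that arc has an explicit fixed point, which $r$ then fixes as well, giving $h_n$ a fixed point.

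It remains to verify three points. Continuity of $h$ holds because the $h_n$ agree on overlaps (the gadget reproduces the old boundary values) and every point of $U^2$ has a neighbourhood inside some $V_n^2$; the image lies in $[0,1]^2$ by construction. For the implication from the cover being finite: if $c_n$ holds for the least such $n$, then $U^2 = V_n^2 = [0,1]^2$ and $h = h_n$ has a fixed point. For the converse: a fixed point $x = h(x)$ lies in some $V_n^2$, so $x = h_n(x)$; by the invariant $\neg c_n$ is impossible, so by decidability $c_n$ holds, and thus the first $n+1$ intervals cover $[0,1]$. I expect the main obstacle to be the recursion bookkeeping: one must decompose each annulus $V_n^2 \setminus V_{n-1}^2$ into rectangular cells whose number and shape depend on how $(a_n,b_n)$ meets $V_{n-1}$, order them so that the extension gadget's matching conditions are met, and check that displacement-freeness survives every cell added while the cover is still proper, while a genuine fixed point is produced at the closing stage — all of it carried out constructively, using only the finitely many comparisons and choices available at each stage.
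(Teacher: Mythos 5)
Your overall architecture---extending a map inductively over $V_n^2$ by the rectangle gadget, keeping it fixed-point-free while the cover is incomplete, planting a fixed point at the closing stage, and using decidability of $[0,1]\subseteq V_n$ for rational intervals---is essentially the paper's. The genuine gap is your assertion that $V_n^2\setminus V_{n-1}^2$ is always a finite union of rectangular cells of positive size abutting $V_{n-1}^2$ along at most three sides. For an arbitrary sequence of rational intervals this is false, and it is not mere bookkeeping. If two intervals abut, $b_i=a_j$, their common endpoint stays uncovered until some later interval properly contains it; at that stage $V_n\setminus V_{n-1}$ is a single point and $V_n^2\setminus V_{n-1}^2$ is a degenerate cross with empty interior. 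There is no room for the gadget, and worse, $h_{n-1}$ need not extend continuously across the seam at all, since it was built independently on the quadrants on either side and its boundary limits need not match. Similar degeneracies come from nested intervals and from endpoints equal to $0$ or $1$. The paper devotes the entire second half of its proof to exactly this: it re-enumerates the cover by a canonical well-behaved family $(c_{i,m},d_{i,m})$ (non-abutting, non-nested, avoiding $0$ and $1$, filling one gap at a time) with the same union and the same finite-subcover behaviour, and only then runs the geometric construction. Your proof needs this reduction, or an equivalent preprocessing step, before the cell decomposition is legitimate; likewise the claim that a non-closing cell has a free side (rather than being constrained on all four) depends on it.

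A secondary weakness is the fixed-point-freeness mechanism. ``Place $E$ at a rational witness and compose with a fixed rotation so that the cell map displaces every point'' is not an argument, and a pointwise displacement invariant is awkward to propagate through the radial extension. The paper's device is cleaner: as long as $V_k\neq[0,1]$, keep the image of $f_k$ inside $\partial[0,1]^2$ and let $f_k$ restrict to the quarter-turn rotation there; then any fixed point of $f_k$ would be a fixed point of that rotation, which has none. Your description of the closing stage is also off---$r$ collapses the cell onto a segment, so ``$r$ fixes the fixed point of the one-dimensional extension'' does not typecheck; the paper simply designates an interior point of the closing cell as the fixed point and interpolates piecewise-linearly. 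With the well-behavedness reduction added and the boundary-image invariant substituted for your displacement claim, your outline coincides with the paper's proof.
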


\begin{proof}
  All intervals considered in the proof have rational endpoints, so we just call them ``intervals''.
  Throughout, we shall depend on decidability of the linear order on~$\QQ$, for example to test inclusion of one interval in another, or to tell whether a finite sequence of intervals with rational endpoints covers~$[0,1]$.

  We first consider the situation when the intervals $(a_i, b_i)$ are well-behaved in the following sense:
  \begin{itemize}
  \item no interval shares an endpoint with $(0,1)$: $a_i, b_i \not\in \set{0,1}$ for all~$i$,
  \item there are no abutting intervals: $b_i \neq a_j$ for all $i, j$, and
  \end{itemize}
  Under these circumstances $(a_i, b_i)$ and $(a_j, b_j)$ are either disjoint with a positive distance between them, or they partially overlap on an open interval.

  Define $V_k \defeq [0,1] \cap \bigcup_{i = 0}^k [a_i, b_i]$.
  We construct maps
  \begin{equation*}
    f_k : \psquare \cup V_k^2 \to \usquare
  \end{equation*}
  so that each $f_k$ extends $f_{k-1}$. In addition, we ensure that if $[0,1] \neq V_k$ then~$f_k$ does not have a fixed point and its image is contained in $\psquare$.

  Let $f_{-1} : \psquare \to \psquare$ be the rotation of~$\psquare$ by a right angle. For each $k \in \NN$, construct $f_k$ from~$f_{k-1}$ as follows:
  \begin{enumerate}
  \item
    If $V_{k-1} = V_k$ then $f_k \defeq f_{k-1}$.
  \item
    If $V_{k-1} \neq V_k \neq [0,1] $ then $V_k$ properly extends~$V_{k-1}$.
    The newly added interval $[a_k, b_k]$ may cover several gaps between prior intervals, fall entirely within such a gap, or partially reach into a gap from one of the sides. We consider the case where a single cap is covered entirely, and ask the reader treat the remaining cases analogously.

    The left-hand side of \cref{fig:fp-free} depicts a typical situation, where the light gray region is~$V_{k-1}^2$ and the dark gray one the area newly contributed in~$V_k^2$ by the addition of~$[a_k, b_k]$.
    The assumption that the intervals are well-behaved makes sure that the dark gray strips have positive widths and heights.
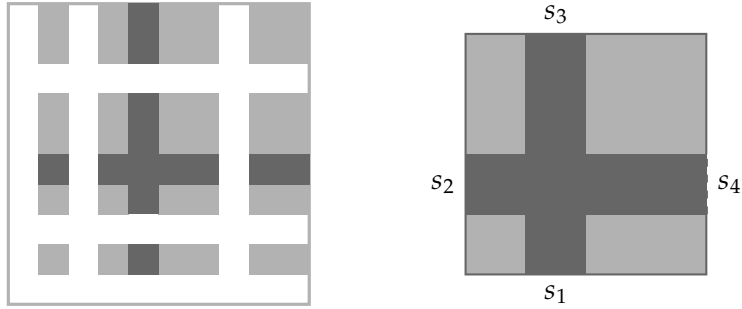
\begin{figure}[tp]
  \centering
  \begin{tikzpicture}[baseline=(current bounding box.center),scale=4]
    \draw[very thick, color=white!70!black] (0,0) rectangle +(1,1) ;
    \fill[fill=white!70!black] (0.1, 0.1) rectangle +(0.1,0.1) ;
    \fill[fill=white!70!black] (0.1, 0.3) rectangle +(0.1,0.1) ;
    \fill[fill=white!70!black] (0.1, 0.5) rectangle +(0.1,0.2) ;
    \fill[fill=white!70!black] (0.1, 0.8) rectangle +(0.1,0.2) ;
    \fill[fill=white!70!black] (0.3, 0.1) rectangle +(0.1,0.1) ;
    \fill[fill=white!70!black] (0.3, 0.3) rectangle +(0.1,0.1) ;
    \fill[fill=white!70!black] (0.3, 0.5) rectangle +(0.1,0.2) ;
    \fill[fill=white!70!black] (0.3, 0.8) rectangle +(0.1,0.2) ;
    \fill[fill=white!70!black] (0.5, 0.1) rectangle +(0.2,0.1) ;
    \fill[fill=white!70!black] (0.5, 0.3) rectangle +(0.2,0.1) ;
    \fill[fill=white!70!black] (0.5, 0.5) rectangle +(0.2,0.2) ;
    \fill[fill=white!70!black] (0.5, 0.8) rectangle +(0.2,0.2) ;
    \fill[fill=white!70!black] (0.8, 0.1) rectangle +(0.2,0.1) ;
    \fill[fill=white!70!black] (0.8, 0.3) rectangle +(0.2,0.1) ;
    \fill[fill=white!70!black] (0.8, 0.5) rectangle +(0.2,0.2) ;
    \fill[fill=white!70!black] (0.8, 0.8) rectangle +(0.2,0.2) ;
    \fill[fill=white!40!black] (0.4, 0.1) rectangle +(0.1,0.1) ;
    \fill[fill=white!40!black] (0.4, 0.3) rectangle +(0.1,0.4) ;
    \fill[fill=white!40!black] (0.4, 0.5) rectangle +(0.1,0.2) ;
    \fill[fill=white!40!black] (0.4, 0.8) rectangle +(0.1,0.2) ;
    \fill[fill=white!40!black] (0.1, 0.4) rectangle +(0.1,0.1) ;
    \fill[fill=white!40!black] (0.3, 0.4) rectangle +(0.4,0.1) ;
    \fill[fill=white!40!black] (0.5, 0.4) rectangle +(0.2,0.1) ;
    \fill[fill=white!40!black] (0.8, 0.4) rectangle +(0.2,0.1) ;
  \end{tikzpicture}
  \hfil
  \begin{tikzpicture}[baseline=(current bounding box.center),scale=8]
    \fill [fill=white!70!black] (0.3,0.3) rectangle +(0.1,0.1) ;
    \fill [fill=white!70!black] (0.3,0.5) rectangle +(0.1,0.2) ;
    \fill [fill=white!70!black] (0.5,0.5) rectangle +(0.2,0.2) ;
    \fill [fill=white!70!black] (0.5,0.3) rectangle +(0.2,0.1) ;
    \fill[fill=white!40!black] (0.4, 0.3) rectangle +(0.1,0.4) ;
    \fill[fill=white!40!black] (0.4, 0.5) rectangle +(0.1,0.2) ;
    \fill[fill=white!40!black] (0.3, 0.4) rectangle +(0.4,0.1) ;
    \fill[fill=white!40!black] (0.5, 0.4) rectangle +(0.2,0.1) ;
    \draw [thick, white!40!black, text=black]
        (0.7,0.5) -- (0.7,0.7) -- (0.5,0.7) -- (0.45,0.7) node[anchor=south] {$s_3$} -- (0.4,0.7) -- (0.3,0.7) --
        (0.3, 0.5) -- (0.3, 0.45) node[anchor=east]  {$s_2$} --
        (0.3,0.3) -- (0.45,0.3) node[anchor=north] {$s_1$} -- (0.7,0.3) -- (0.7, 0.4) ;
    \draw [very thick, white!40!black, text=black, dashed] (0.7,0.4) -- (0.7,0.45) node[anchor=west]  {$s_4$} -- (0.7,0.5) ;
  \end{tikzpicture}
  \caption{A step in the construction of~$f$ from \cref{lem:fix-point-free-map}}
  \label{fig:fp-free}
\end{figure}
    We obtain~$f_k$ by extending~$f_{k-1}$ to the dark gray area separately on each rectangular component. For example, consider the central component, shown separately on the right-hand side of the figure. Because $V_k \neq \usquare$ at least one of the line segments $s_1, s_2, s_3, s_4$ is not contained in the domain of~$f_{k-1}$, say~$s_4$.
    We extend $f_{k-1}$ to every other interval $s_i$ not contained in its domain, in our case $s_1, s_2, s_3$, by traveling along $\psquare$ from the image of one endpoint of~$s_i$ to the other.
    We then extend the map to the dark gray area by repeatedly using the technique described above: first extend to the dark gray rectangles with sides $s_1$, $s_2$, and $s_3$, then to the central square, and finally to the dark gray rectangle with the side~$s_4$.
    The reader may verify that the same approach works for other configurations.
  \item
    If $V_{k-1} \neq V_k = [0,1]$ then~$[a_k, b_k]$ fills in the last gap in~$[0,1]$.
    We visualize the situation by re-interpreting the right-hand side of the figure as showing~$\usquare$, where
    light gray is~$V_k^2$ and dark gray the newly contributed area, except that this time all four segments $s_1, s_2, s_3, s_4$ are already in the domain of~$f_{k-1}$. 
    Now we extend $f_{k-1}$ as follows:
    extend it to the four dark gray rectangles with sides $s_1, s_2, s_3, s_4$;
    then extend it to the inner dark gray square by declaring its center to be a fixed point of~$f_k$, and
    extending the rest by linear interpolation between the central point and the boundary of the square, this time using all of~$\usquare$ as the codomain of~$f_k$.
    The reader may verify that the same approach works when the dark gray area is adjacent to~$\psquare$, in which case it is shaped like the letter~L.
  \end{enumerate}
  Notice that $V_k \neq [0,1]$ implies that~$f_k$ has no fixed points. Indeed, if $t = f_k(t)$ then $t \in \psquare$, which would make~$t$ a fixed point of~$f_{-1}$.

  Let $h$ be the union of $f_k$'s, restricted to $(\usquare \cap \bigcup_{i \in \NN} (a_i, b_i))^2$.
  We must verify that~$h$ has the required property.
  If $[0,1] \subseteq (a_0, b_0) \cup \cdots \cup (a_n, b_n)$ for some $n \in \NN$, then $V_n = [0,1]$, so~$h$ has a fixed point by construction of~$f_n$.
  Conversely, if~$t$ is a fixed point of~$h$ then $t \in ([0,1] \cap (a_n, b_n))^2$ for some~$n \in \NN$, hence~$f_n$ has a fixed point, which is only possible if $V_n = [0,1]$, but then $[0,1] \subseteq (a_0, b_0) \cup \cdots \cup (a_n, b_n)$ because the intervals are well-behaved.

  It remains to remove the requirement that the intervals be well-behaved.
  Given any sequence of intervals $(a_0, b_0), (a_1, b_1), \ldots$, we define a new well-behaved sequence with the same union, which has a finite subcover of~$[0,1]$ if, and only if, $(a_0, b_0), (a_1, b_1), \ldots$ does.
  We may then apply the above construction to the new sequence.

  Let $p_i$ be the $i$-th prime, and $P_i \defeq p_1 \cdots p_i$ the product of the first~$i$ primes.
  For $i \in \NN$ and $m \in \ZZ$ let
  \begin{equation*}
    \textstyle
    c_{i,m} \defeq \frac{1 + 2 m \cdot p_i}{P_i}
    \qquad\text{and}\qquad
    d_{i,m} \defeq \frac{1 + (2 m + 3) \cdot p_i}{P_{i+1}}.
  \end{equation*}
  %
  %
  %
  %
  %
  No two intervals $(c_{i,m}, d_{i,m})$ and $(c_{j,n}, d_{j,n})$ share an endpoint, and their endpoints are all different from $0$ and $1$. Also, for a fixed~$i$ the intervals $(c_{i,m}, d_{i,m})$ form a well-behaved cover of~$\RR$.

  We enumerate some of the intervals $(c_{i,m}, d_{i,m})$ in phases, each phase contributing finitely many intervals. In the $i$-th phase we include those $(c_{i,m}, d_{i,m})$ that are contained in $(a_0, b_0) \cup \cdots \cup (a_i, b_i)$.
  This way we obtain a well-behaved sequence, as the construction of $c_{i,m}$ and $d_{i,m}$ guarantees that intervals are not abutting and that their endpoints avoid $0$ and $1$.

  Obviously, the newly enumerated intervals cover at most $\bigcup_{k \in \NN} (a_k, b_k)$. They cover all of it, because any $x \in (a_k, b_k)$ is covered at the latest by the stage at which the widths of $(c_{i,m}, d_{i,m})$'s are smaller than the distance of~$x$ to the endpoints $a_k$ and $b_k$.
  Finally, if $(a_0, b_0) \cup \cdots \cup (a_k, b_k)$ cover $[0,1]$, then they do so with a bit of overlap. There is a stage~$i$ such that the widths of $(c_{i,m}, d_{i,m})$'s are smaller than the overlap, so $[0,1]$ will be covered at least by the $i$-th stage.
\end{proof}

When the previous lemma is combined with Brouwer's fixed point theorem, a variant of Heine-Borel compactness of~$[0,1]$ emerges.

\begin{corollary}
  In the topos~$\TT{\mil}$, a countable cover of the closed unit interval by open intervals with rational endpoints has a finite subcover.
\end{corollary}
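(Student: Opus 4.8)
The plan is to feed \cref{lem:fix-point-free-map} into Brouwer's fixed-point theorem. Suppose we are given, internally in $\TT{\mil}$, a countable cover of $[0,1]$ by open intervals $(a_0,b_0),(a_1,b_1),\ldots$ with rational endpoints, so that $\all{x \in \objI} \some{i \in \objN} a_i < x \land x < b_i$. Since \cref{lem:fix-point-free-map} is intuitionistically valid it is available inside $\TT{\mil}$; applying it to this sequence yields a continuous map $h$ from $\bigl([0,1] \cap \bigcup_{i \in \objN}(a_i,b_i)\bigr)^2$ to $[0,1]^2$ which has a fixed point exactly when $[0,1] \subseteq (a_0,b_0) \cup \cdots \cup (a_n,b_n)$ for some $n \in \NN$. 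Thus it suffices to exhibit a fixed point of~$h$.

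To apply a fixed-point theorem I would first recognise the domain of $h$ as a copy of $\objI^2$. Its domain is the sub-assembly $\set{x \of \objI^2 \such \phi(x)}$, where for $x = (x_1,x_2)$ the predicate $\phi(x_1,x_2)$ asserts that each of $x_1$ and $x_2$ lies in one of the intervals $(a_i,b_i)$. The hypothesis that the sequence covers $[0,1]$ says precisely that $\phi$ holds at every point of $\objI^2$, so $\phi$ is equivalent to $\top$; in particular it is $\neg\neg$-stable, hence determines a map $\objI^2 \to \ClProp$, and the domain of $h$ is all of $\objI^2$. I would then invoke the $n = 2$ instance of \cref{thm:partial-brouwer} to obtain $y \in \objI^2$ with $\phi(y) \lthen h(y) = y$; since $\phi(y)$ holds, $h(y) = y$, so $h$ has a fixed point. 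By \cref{lem:fix-point-free-map} there is then $n \in \NN$ with $[0,1] \subseteq (a_0,b_0) \cup \cdots \cup (a_n,b_n)$, which is the desired finite subcover. (Alternatively, once the domain of $h$ is identified with $\objI^2$ one may simply quote \cref{thm:internal-brouwer}.)

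The only step requiring care --- and the one I would flag as the main obstacle --- is this reduction of the partial map $h$ to a total self-map of $\objI^2$: one must check that the internal covering statement $\all{x \in \objI} \some{i} x \in (a_i,b_i)$ makes the defining predicate $\phi$ topos-valid, so that the hypotheses of \cref{thm:partial-brouwer} are genuinely met. This is exactly the situation that the ``$\neg\neg$-stable domain'' form of the theorem was set up to absorb, so once it is dispatched there is no further obstacle; everything else is routine bookkeeping in the internal logic on top of the already-proved \cref{lem:fix-point-free-map,thm:partial-brouwer}.
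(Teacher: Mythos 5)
Your proposal is correct and follows essentially the same route as the paper: combine \cref{lem:fix-point-free-map} with Brouwer's fixed-point theorem, using the covering hypothesis to see that the domain of $h$ is all of $\objI^2$. The paper simply quotes \cref{thm:internal-brouwer} directly (reserving \cref{thm:partial-brouwer} for the Drinking Buddies Principle, where the cover need not be total), whereas you route through the partial version first --- a harmless, slightly more belt-and-braces packaging of the same argument.
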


\begin{proof}
  Let $h : \objI^2 \to \objI^2$ be the map from \cref{lem:fix-point-free-map} for the given cover of~$\objI$.
  By \cref{thm:internal-brouwer} it has a fixed point, therefore \cref{lem:fix-point-free-map} ensures that~$\objI$ is covered already by a finite subcover.
\end{proof}

We can improve on the corollary to give a variant of the Drinker paradox\footnote{The ``paradox'' states that in every non-empty pub there is a person, such that if the person is drinking then everyone is drinking. It is a non-constructive principle~\cite{warren2018drinker}.} for sufficiently tame predicates on the closed unit interval.

\begin{proposition}[Drinking Buddies Principle]
  \label{prop:drinking-buddies}%
  In the topos~$\TT{\mil}$, suppose $U$ is a countable union of open intervals with rational endpoints.
  There are $x, y \in \objI$ such that $x \in U \land y \in U$ if, and only if, $\all{z \in \objI} z \in U$.
\end{proposition}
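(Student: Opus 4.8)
The plan is to feed the fixed-point-free map of \cref{lem:fix-point-free-map} into the $n = 2$ instance of the partial Brouwer fixed-point theorem \cref{thm:partial-brouwer}, in the same spirit as the corollary just proved, but now without assuming that $U$ covers $\objI$. Write $U = \bigcup_{i \in \NN} (a_i, b_i)$ with $a_i, b_i \in \QQ$, so that ``$z \in U$'' abbreviates $\some{i \in \objN} a_i < z < b_i$. The right-to-left direction of the biconditional is vacuous: if $\all{z \in \objI} z \in U$, then $x \in U \land y \in U$ holds for \emph{any} $x, y \in \objI$. So it suffices to exhibit a single pair $x, y \in \objI$ for which $(x \in U \land y \in U) \lthen \all{z \in \objI} z \in U$.

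The crucial preliminary step is that the predicate $z \mapsto (z \in U)$ on $\objI$ is $\neg\neg$-stable, hence, by \cref{sec:assembly-nabl-negn}, represented by a map $\chi_U \in \objI \to \ClProp$; then $\phi(x, y) \defeq \chi_U(x) \land \chi_U(y)$ is a map $\objI^2 \to \ClProp$ and the sub-assembly $\set{w \of \objI^2 \such \phi(w)}$ is carried by $(\carrier{\objI} \cap U)^2$. For the stability, note that each of $a_i < z$ and $z < b_i$ is $\neg\neg$-stable by \cref{lem:lt-stable}; moreover, from a realizer of $z \in \carrier{\objI}$ -- a $\mil$-index for $z$ -- one computes, using any oracle $\alpha \in \MM{\mil}$, rational approximations of $z$, and by searching these for one certifying $a_i < z < b_i$ one obtains a realizer of $a_i < z < b_i$ whenever that relation holds. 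Dovetailing this search over all $i \in \NN$ produces a realizer of $z \in U$ exactly when $z \in U$ is true, uniformly in $\MM{\mil}$; the witnessing index may depend on $\alpha$, which is harmless for a realizer of an existential. Thus $\neg\neg(z \in U) \lthen z \in U$ is realized by the program that discards the double-negation witness and runs this dovetailed search -- an unbounded search of exactly the kind already legitimate in $\TT{\mil}$ by \cref{prop:markov-principle}.

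Now \cref{lem:fix-point-free-map}, being internally valid in $\TT{\mil}$, supplies a morphism $h : \set{w \of \objI^2 \such \phi(w)} \to \objI^2$ -- its domain is carried by $(\carrier{\objI} \cap U)^2$, precisely as required -- such that $h$ has a fixed point if and only if $[0,1] \subseteq (a_0, b_0) \cup \cdots \cup (a_n, b_n)$ for some $n \in \NN$. Applying the $n = 2$ instance of \cref{thm:partial-brouwer} to $\phi$ and $h$ produces $(x, y) \in \objI^2$ with $\phi(x, y) \lthen h(x, y) = (x, y)$. This pair works: if $x \in U \land y \in U$, i.e., $\phi(x, y)$ holds, then $h(x, y) = (x, y)$, so $h$ has a fixed point, hence by \cref{lem:fix-point-free-map} there is $n$ with $[0,1] \subseteq (a_0, b_0) \cup \cdots \cup (a_n, b_n) \subseteq U$, whence $\all{z \in \objI} z \in U$. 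Together with the vacuous converse, this establishes the biconditional.

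The main obstacle is the $\neg\neg$-stability of membership in $U$, which has to be verified at the level of realizers; the essential feature making it work is that the index witnessing $z \in U$ may vary with the oracle, so that ``$z \in U$'' behaves computationally like a $\Sigma^0_1$ statement whose truth coincides with the halting of a search of the sort supplied by \cref{prop:markov-principle}. Once this is in hand, the remaining ingredients -- that \cref{lem:fix-point-free-map} yields $h$ with exactly the domain $\set{w \of \objI^2 \such \phi(w)}$, and the fixed-point bookkeeping above -- are routine.
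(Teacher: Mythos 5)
Your proof is correct and follows the paper's own route: both arguments feed the map of \cref{lem:fix-point-free-map} into the $n=2$ instance of \cref{thm:partial-brouwer}, note that the right-to-left implication is vacuous, and read off the conclusion from the fixed-point characterization. The only divergence is in the one non-routine sub-step, the $\neg\neg$-stability of membership in $U$: the paper reduces it to the stability of $<$ (\cref{lem:lt-stable}) by exhibiting a morphism $t : \RRd \to \RRd$, a weighted sum of tent maps, with $U = \set{x \such t(x) > 0}$, whereas you argue directly at the level of realizers via a dovetailed, Markov-style search over the indices $i$ and the precision $k$. Your version is fine — and your remark that the witnessing index may vary with the oracle is legitimate here, since the realizer of an existential produced by a program is the computed value, which may differ across parameters and hence carry different witnesses (compare \cref{example:topos-forall-exists} and the parameter-dependent rational witness in the proof of \cref{lem:lt-stable}) — but the paper's tent-map reduction lets one quote an already-established stability result instead of re-running a realizer calculation.
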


\begin{proof}
  Let $(a_0, b_0), (a_1, b_1), \ldots$ be a sequence of intervals with rational endpoints and $U \defeq \bigcup_{i \in \NN} (a_i, b_i)$ and $h : (\objI \cap U)^2 \to \objI^2$ the map from \cref{lem:fix-point-free-map} for the given intervals. We claim that $U$ is a $\neg\neg$-stable subset of~$\RR$. One way to see this is to recall from \cref{lem:lt-stable} that~$<$ is $\neg\neg$-stable, and observe that there is a map $t : \RR \to \RR$ such that $U = \set{x \in \RR \such t(x) > 0}$, for instance a weighted sum of ``igloo maps'' erected on the intervals,
  \begin{equation*}
    \textstyle
    t(x) = \sum_{n \in \NN} 2^{-n} \cdot \max (0, (a_n - x) (x - b_n) / (b_n - a_n)^2).
  \end{equation*}
  %
  %
  %
  %
  Therefore, \cref{thm:partial-brouwer} applies to~$h$ to give $(x, y) \in \objI^2$ such that if $x, y \in U$ then $h(x,y) = (x,y)$, whence $\objI \subseteq U$ by \cref{lem:fix-point-free-map}.
\end{proof}

We are not certain what the principle is good for, apart from obliging one to test its veracity with a buddy in a pub.


\subsubsection*{Acknowledgment}

We thank Ingo Blechschmidt, Joseph Miller, and Andrew Swan for valuable suggestions and engaging discussions.
This material is based upon work supported by the Air Force Office of Scientific Research under award number FA9550-21-1-0024.

\bibliographystyle{plain}
\bibliography{references}

\end{document}